\newtheorem{definition}{Definition}
\newtheorem{sled}{Corollary}
\newtheorem{hypothesis}{Conjecture}
\newtheorem*{hypothesis*}{Conjecture 1$'$}
\newtheorem{lemma}{Lemma}
\newtheorem{theor}{Theorem}
\newtheorem*{theor*}{Theorem}
\newtheorem{proposition}{Proposition}
\newcommand{\sign}{\operatorname{sign}}
\newcommand{\intt}{\operatorname{lt}}
\theoremstyle{definition}
\begin{document}

\author{A.\,A.~Gornitskii\thanks{The work was supported by RFBR grant 16-01-00818.}}

\title {Essential Signatures and Monomial Bases for $B_n$ and $D_n$.}
\date{}

\maketitle

\thispagestyle{empty}
\begin{abstract}
In the representation theory of simple Lie algebras, we consider the problem of constructing a monomial basis in an arbitrary irreducible finite-dimensional highest weight module. We construct a PBW-type basis in every finite-dimensional representation of $B_n$ and $D_n$ and we describe the associated semigroup of essential signatures. These bases are parameterized by integer points in some polytopes. We give the inequalities defining these polytopes.
\end{abstract}
\section {Introduction}

Let $\mathfrak{g}$ be a simple complex Lie algebra.  One has the triangular decomposition
$\mathfrak{g}=\mathfrak{u}^{-}$$\oplus$$\mathfrak{t}$$\oplus$$\mathfrak{u}$, where
$\mathfrak{u}^{-}$ and $\mathfrak{u}$ are mutually opposite maximal unipotent subalgebras and $\mathfrak{t}$ is a Cartan subalgebra.

One has: $\mathfrak{u}= \langle e_{\alpha}$ $\mid$ $\alpha$ $\in$ $\Delta_{+}\rangle$,
$\mathfrak{u^{-}}=\langle e_{-\alpha}$ $\mid$ $\alpha$ $\in$ $\Delta_{+}\rangle$, where
$\Delta_{+}$ is the system of positive roots, $e_{\pm\alpha}$
 are the root vectors, and the notation $\langle\ldots\rangle$ stands for the linear span.

 We denote a finite-dimensional irreducible $\mathfrak{g}$-module with highest weight $\lambda$ by $V(\lambda)$ and a highest weight vector in this module by $v_{\lambda}$. By a \emph{monomial} basis in $V(\lambda)$ we mean a basis consisting of vectors of the form $e_{-\alpha_{i_1}}\ldots e_{-\alpha_{i_k}}\cdot v_{\lambda},$ where $\alpha_{i}\in \Delta_{+}$, up to a multiple. By a \emph{PBW-type} basis (PBW stands for Poincare-Birkhoff-Witt) in $V(\lambda)$ we mean a monomial basis consisting  of vectors of the form $e_{-\alpha_{1}}^{p_1}\ldots e_{-\alpha_{N}}^{p_N}\cdot v_{\lambda},$ $p_i\in\mathbb{Z}_{\geq0}$, for some fixed numeration of positive roots $\{\alpha_1,\ldots,\alpha_N\}$ up to a multiple.

  Various approaches to construction of monomial bases in $V(\lambda)$ are known. For example, for every reduced decomposition of the longest element in the Weyl group one may obtain a basis in each $V(\lambda)$ by applying the lowering operators corresponding to simple roots to the highest weight vector (see \cite{[L]}). These bases, which we call \emph{string bases}, are parametrized by integer points in the so-called string cone. Another way to obtain a monomial basis is coming from Lusztig's parametrization of the canonical basis in the quantized enveloping algebra (see \cite{[Lu]},\cite{[Li]}). Every reduced decomposition of the longest element in the Weyl group defines a numeration of positive roots $\{\alpha_1,\ldots,\alpha_N\}$. The vectors $\frac{e_{-\alpha_{1}}^{p_{{1}}}}{p_1!}\cdot \ldots \cdot
 \frac{e_{-\alpha_{N}}^{p_{{N}}}}{p_N!}\cdot{v}_{\lambda}$ generate $V(\lambda)$ and one chooses a basis from this set of vectors by using some weighted opposite right lexicographic order on $\mathbb{Z}^{N}$. We call this basis \emph{Lusztig's PBW-type basis}. Also, one may obtain a PBW-type basis corresponding to an arbitrary numeration of positive roots and a homogeneous order on $\mathbb{Z}^{N}$ by using FFLV (Feigin-Fourier-Littelmann-Vinberg) approach (see \cite{[V]},\cite{[FFL1]},\cite{[FFL2]}).

  We construct (see Section \ref{orthcase}) a PBW-type basis for some non-homogeneous order on $\mathbb{Z}^{N}$ in $V(\lambda)$ for all irreducible representations of the Lie algebra $\mathfrak{g}$ of type $B_n$ or $D_n$. These bases do not coincide with Lusztig's PBW-type bases: at least the numeration of positive roots is different. However, our bases are close to Lusztig's PBW-type bases in the following sense. Our approach can be used for constructing the PBW-type bases for Lie algebras $A_n$ and $C_n$ as well. In these cases our bases coincide with Lusztig's bases for some reduced expression of the longest element in the Weyl group. We are grateful to the referee for this observation.

   The basic concept is defined as follows.

\begin{definition}
 A signature is an $(N+1)$-tuple
 $\sigma=(\lambda;p_{{1}},\dots,p_{{N}})$,
  where $N$ is the number of positive roots numbered in a certain fixed order: $\Delta_{+}=\{\alpha_{1},\dots,\alpha_{N}\}$, $\lambda$ is a dominant weight, and $p_{i}\in\mathbb{Z}_{+}$.
\end{definition}

Set
$$
{v}(\sigma)=\frac{e_{-\alpha_{1}}^{p_{{1}}}}{p_1!}\cdot \ldots \cdot
 \frac{e_{-\alpha_{N}}^{p_{{N}}}}{p_N!}\cdot{v}_{\lambda}.
$$
$\newline$
$\lambda$ is called the \emph{highest weight} of $\sigma$, the eigenweight $\lambda-\sum p_i\alpha_i$ of the vector $v(\sigma)$ is called the \emph{weight} of $\sigma$, and the numbers $(p_1,\ldots,p_N)$ are called the \emph{exponents} of $\sigma$. Thus we have defined a vector in $V(\lambda)$ for every signature with highest weight $\lambda$. The vectors ${v}(\sigma)$ generate $V(\lambda)$, but they are linearly dependent. Our goal is to select a basis of $V(\lambda)$ from the set of all vectors ${v}(\sigma)$.

Now we explain an approach to solving this problem. To this end, we need to equip the set of signatures with an order. Let $\omega_1, \ldots, \omega_n$ be the fundamental weights and let $$\sigma=(\lambda;p_1,\ldots,p_N),\quad \sigma'=(\lambda';p_1',\ldots,p_N'),$$
$$ \lambda=\sum k_{i}\omega_{i},\quad \lambda'=\sum k_{i}'\omega_{i},\;\qquad k_{i},k_{i}'\in\mathbb{Z}_{+}.$$
First we compare the tuples $(k_1,\ldots,k_n)$ and $(k_1',\ldots,k_n')$ by using the degree lexicographic order and put $\sigma<\sigma'$ if $(k_1,\ldots,k_n)<(k'_1,\ldots,k'_n)$.
If $\lambda=\lambda'$, then we compare the tuples $(p_1,\ldots, p_N)$ and $(p'_1,\ldots, p'_N)$ by using a fixed monomial order on $\mathbb{Z}_{\ge0}^{N}$.

 \begin{definition} A signature $\sigma$ is essential, if
 $v(\sigma)\notin\langle v(\tau)\mid\tau<\sigma\rangle$.

\end{definition}
The following statement is obvious.
\begin{proposition}
The set $\{{v}(\sigma)\mid\sigma \mbox{ essential}\}$ is a basis of $V(\lambda)$.
\end{proposition}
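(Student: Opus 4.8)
The plan is to reduce the statement to a finite-dimensional leading-term argument carried out one weight space at a time. First I would note that every $v(\sigma)$ is a weight vector, of weight $\mathrm{wt}(\sigma)=\lambda-\sum p_i\alpha_i$. Using the weight decomposition $V(\lambda)=\bigoplus_\mu V(\lambda)_\mu$ together with the fact (given in the excerpt) that the full family $\{v(\sigma)\}$ spans $V(\lambda)$, it follows that for each $\mu$ the vectors $v(\sigma)$ with $\mathrm{wt}(\sigma)=\mu$ span the single weight space $V(\lambda)_\mu$. Moreover, for fixed $\mu$ there are only finitely many signatures of highest weight $\lambda$ and weight $\mu$, since $\sum p_i\alpha_i=\lambda-\mu$ has finitely many solutions with $p_i\in\mathbb{Z}_+$. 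A final reduction concerns essentiality itself: because $v(\sigma)$ is homogeneous of weight $\mathrm{wt}(\sigma)$, projecting onto that weight space shows that $v(\sigma)\in\langle v(\tau)\mid\tau<\sigma\rangle$ holds if and only if $v(\sigma)$ already lies in the span of those $v(\tau)$ with $\tau<\sigma$ and $\mathrm{wt}(\tau)=\mathrm{wt}(\sigma)$. Hence testing essentiality of $\sigma$ involves only strictly smaller signatures of the same weight, on which the order is simply the chosen monomial order on the exponent tuples $(p_1,\ldots,p_N)\in\mathbb{Z}_+^N$, and in particular a total order.

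With these reductions the claim becomes the standard fact that, for a finite totally ordered spanning set, the subset of vectors not lying in the span of the strictly smaller ones is a basis. For linear independence I would take a hypothetical nontrivial relation $\sum_i c_i v(\sigma_i)=0$ among essential signatures of a fixed weight $\mu$, let $\sigma_{i_0}$ be the largest $\sigma_i$ with $c_{i_0}\neq 0$ (this maximum exists as the set is finite and totally ordered), and solve for $v(\sigma_{i_0})$; this exhibits $v(\sigma_{i_0})$ as a linear combination of $v(\sigma_i)$ with $\sigma_i<\sigma_{i_0}$, contradicting essentiality of $\sigma_{i_0}$. For spanning I would induct on the finite totally ordered set of signatures of weight $\mu$: if $\sigma$ is essential there is nothing to prove, and otherwise $v(\sigma)\in\langle v(\tau)\mid\tau<\sigma\rangle$ by definition, so $v(\sigma)$ is a combination of vectors $v(\tau)$ with $\tau<\sigma$, each of which lies in the span of essential vectors by the induction hypothesis. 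Running both arguments over all $\mu$ yields the proposition.

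As the statement is flagged obvious, there is no genuine obstacle; the only point deserving care is the interface between the global order on signatures and the weight grading. One must justify that, when checking whether $\sigma$ is essential, the strictly smaller signatures of a different weight may be ignored, and this is exactly what homogeneity of $v(\sigma)$ provides. Once the problem is localized to a single finite-dimensional weight space equipped with a total order, both the independence and the spanning steps are entirely routine.
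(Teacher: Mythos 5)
Your proof is correct, and it is precisely the standard ``greedy spanning set'' argument that the paper omits by declaring the proposition obvious: restrict to a single weight space (justified, as you note, by homogeneity of each $v(\sigma)$ and the finiteness of solutions to $\sum p_i\alpha_i=\lambda-\mu$), then run the usual maximal-term independence and downward-induction spanning arguments with respect to the total monomial order. There is no gap, and your care about the interface between the order and the weight grading is exactly the one point worth making explicit.
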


 The essential signatures with given highest weight $\lambda$ parametrize the desired monomial basis of $V(\lambda)$.

 Let $\mathfrak{t}_{\mathbb{Z}}\subset\mathfrak{t}$ be the coroot lattice, i.e., the lattice of vectors on which all weights take integer values. The following proposition was proved by Vinberg. For convenience of
the reader, we provide a proof in Section \ref{2}.

 \begin{proposition}
 The essential\label{1} signatures form a semigroup $\Sigma=\Sigma_{\mathfrak{g}}$ in $\mathfrak{t}^{*}_{\mathbb{Z}}\oplus\mathbb{Z}^{N}$.

\end{proposition}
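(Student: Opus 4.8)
The plan is to show that the set of essential signatures is closed under componentwise addition $(\lambda;p)+(\lambda';p')=(\lambda+\lambda';p+p')$; since the zero signature is clearly essential, this gives a semigroup (in fact monoid) structure, and membership in the lattice $\mathfrak t^*_{\mathbb Z}\oplus\mathbb Z^N$ is immediate from integrality of dominant weights and from $p_i\in\mathbb Z_+$. So suppose $\sigma=(\lambda;p_1,\dots,p_N)$ and $\sigma'=(\lambda';p'_1,\dots,p'_N)$ are essential; I must prove $\sigma+\sigma'$ is essential. The main tool is the Cartan component: the vector $v_\lambda\otimes v_{\lambda'}$ is a highest weight vector of weight $\lambda+\lambda'$ in $V(\lambda)\otimes V(\lambda')$, hence generates an irreducible submodule isomorphic to $V(\lambda+\lambda')$. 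This yields a $\mathfrak g$-equivariant embedding $\iota\colon V(\lambda+\lambda')\hookrightarrow V(\lambda)\otimes V(\lambda')$ with $\iota(v_{\lambda+\lambda'})=v_\lambda\otimes v_{\lambda'}$, where on the tensor product $e_{-\alpha}$ acts through the coproduct as $e_{-\alpha}\otimes 1+1\otimes e_{-\alpha}$.

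Since the coproduct is an algebra homomorphism and $\Delta(e_{-\alpha}^{\,P}/P!)=\sum_{a+b=P}(e_{-\alpha}^{\,a}/a!)\otimes(e_{-\alpha}^{\,b}/b!)$, multiplying the factors for $\alpha_1,\dots,\alpha_N$ in order and using $(x\otimes y)(x'\otimes y')=xx'\otimes yy'$ gives the clean expansion
\[
\iota\bigl(v(\sigma+\sigma')\bigr)=\sum_{i+j=p+p'} v\bigl((\lambda;i)\bigr)\otimes v\bigl((\lambda';j)\bigr),
\]
the sum being over all $i,j\in\mathbb Z_{\ge0}^N$ with $i_k+j_k=p_k+p'_k$ for every $k$; the distinguished term $i=p,\ j=p'$ equals $v(\sigma)\otimes v(\sigma')$. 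The same formula holds for any $\tau=(\lambda+\lambda';Q)$ with $p+p'$ replaced by $Q$.

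Next I extract the leading term. By the Proposition above, for each weight $\nu$ the vectors $v(\rho)$ with $\rho$ essential of highest weight $\lambda$ and weight $\nu$ form a basis of $V(\lambda)_\nu$; a straightforward induction on the (well-ordered) monomial order shows that every $v((\lambda;a))$ of weight $\nu$ lies in the span of those essential $v(\rho)$ with $\rho\le(\lambda;a)$. Let $\phi_\sigma$ be the coordinate functional dual to $v(\sigma)$ in that essential basis, extended by zero on the other weight spaces, and define $\phi_{\sigma'}$ analogously. The key consequence is that $\phi_\sigma(v((\lambda;a)))\ne 0$ forces $a\ge p$: indeed $\phi_\sigma$ is supported on $V(\lambda)_\nu$ and vanishes on every essential $v(\rho)$ with $\rho\neq\sigma$, so a nonzero value means $v(\sigma)$ occurs in the essential expansion of $v((\lambda;a))$, whence $\sigma\le(\lambda;a)$, i.e.\ $p\le a$ (with $a=p$ recovering $(\lambda;a)=\sigma$).

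Finally, apply $\Phi=\phi_\sigma\otimes\phi_{\sigma'}$ to these expansions. In $\Phi(\iota(v(\sigma+\sigma')))$ a summand survives only when $i\ge p$ and $j\ge p'$; as the monomial order is a cancellative total order compatible with addition and $i+j=p+p'$, this forces $i=p,\ j=p'$, so $\Phi(\iota(v(\sigma+\sigma')))=\phi_\sigma(v(\sigma))\,\phi_{\sigma'}(v(\sigma'))\ne 0$. For any $\tau<\sigma+\sigma'$ of highest weight $\lambda+\lambda'$, survival of a summand of $\Phi(\iota(v(\tau)))$ would likewise require $a\ge p,\ b\ge p'$, hence $Q=a+b\ge p+p'$, contradicting $Q<p+p'$; thus $\Phi\circ\iota$ annihilates every such $v(\tau)$. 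Were $\sigma+\sigma'$ non-essential, $v(\sigma+\sigma')$ would be a combination of these $v(\tau)$, and applying $\Phi\circ\iota$ would give $0\ne 0$. Hence $\sigma+\sigma'$ is essential. I expect the main obstacle to be the third step, namely proving that $\phi_\sigma$ detects the monomial-order leading term (survival forces $a\ge p$): this rests on the triangularity of the essential basis and on comparing signatures first by highest weight and only then by the monomial order on exponents.
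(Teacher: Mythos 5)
Your proof is correct, and it reaches the conclusion by a route that is formally dual to the paper's. The paper works on the function side: it identifies $V(\lambda)^{*}$ with $\mathbb{C}[G]^{(B)}_{\lambda}\subset\mathbb{C}[G/U]$, expands $f_{\omega}|_{U^{-}}=\sum_{\sigma}\prod z_{i}^{p_{i}}\langle\omega,v(\sigma)\rangle$ using the factorization $U^{-}=U_{-\alpha_{1}}\cdots U_{-\alpha_{N}}$, characterizes essentiality of $\sigma$ by the existence of $\omega$ with $\prod z_{i}^{p_{i}}$ the least term of $f_{\omega}|_{U^{-}}$, and then simply observes that the least term of $f\cdot g$ is the product of the least terms. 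You instead work on the module side: the Cartan embedding $\iota\colon V(\lambda+\lambda')\hookrightarrow V(\lambda)\otimes V(\lambda')$ together with the coproduct identity for divided powers replaces the multiplication $\mathbb{C}[G]^{(B)}_{\lambda}\cdot\mathbb{C}[G]^{(B)}_{\lambda'}\subset\mathbb{C}[G]^{(B)}_{\lambda+\lambda'}$, and your functional $(\phi_{\sigma}\otimes\phi_{\sigma'})\circ\iota$ is exactly the witness $\omega$ that the paper would obtain as the product $f_{\phi_{\sigma}}f_{\phi_{\sigma'}}$. What your version buys is self-containedness: no recourse to $G/U$, the big cell, or the parametrization of $U^{-}$, at the cost of proving by hand the two ingredients the paper gets for free from that setup, namely the triangularity of the essential basis (your induction along the well-ordered monomial order, which is the step you rightly flag as the crux) and the multinomial expansion of $\iota(v(\sigma+\sigma'))$. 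The paper's version buys the reusable characterization of essential signatures via least terms of functions on $U^{-}$, which it needs again in Section 3; your argument would not supply that. Both hinge on the same two facts about the monomial order — that it is a well-order and that it is strictly compatible with addition — and your use of them (forcing $i=p$, $j=p'$ from $i\geq p$, $j\geq p'$, $i+j=p+p'$) is sound.
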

Thus monomial bases of the above type are described by the semigroup of essential signatures $\Sigma$. These bases (including Lusztig's PBW-type bases) and the string bases are special cases of bases coming from the method of birational sequences (see \cite{[Li]}).

Vinberg formulated some conjectures about the structure of the set of essential signatures. (In fact, all these conjectures were formulated for a \emph{homogeneous} order on $\mathbb{Z}^{N}$.) Here is the first conjecture of Vinberg:
\begin{hypothesis} There exist a numeration of positive roots and a monomial order on $\mathbb{Z}^{N}$ such that the semigroup $\Sigma$ is generated by the essential signatures of fundamental highest weights.
\label{conjecture1}
 \end{hypothesis}

A weaker version of
this conjecture can be formulated for a prescribed set of dominant
weights $\{\lambda_1,\ldots,\lambda_m\}$ containing all the fundamental weights:
 \begin{hypothesis*}
The semigroup $\Sigma$ (for the chosen numeration of positive roots and the monomial order on signatures) is generated by the essential signatures of highest weights in $\{\lambda_1,\ldots,\lambda_m\}$.
 \end{hypothesis*}

Let us formulate the other conjectures of Vinberg. Let $\Sigma_{\mathbb{Q}}$ be the rational cone spanned by $\Sigma$. Then this cone can be defined by linear inequalities. (The number of these inequalities is finite if Conjecture \ref{conjecture1}$'$ holds.)
\begin{hypothesis}
 The semigroup $\Sigma$ is saturated, i.e.,
 $\Sigma=\Sigma_{\mathbb{Q}}\bigcap (\mathfrak{t}_{\mathbb{Z}}^{*}\oplus
 \mathbb{Z}^{N})$.
 \label{conjecture2}
\end{hypothesis}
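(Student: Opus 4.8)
The plan is to deduce saturation from an explicit polyhedral model of the essential signatures. Granting Conjecture \ref{conjecture1}$'$ (finite generation), so that, as observed in the text, $\Sigma_{\mathbb{Q}}$ is a genuine rational polyhedral cone cut out by finitely many inequalities, the first step is to extract from the classification of essential signatures for $B_n$ and $D_n$ obtained in the body of the paper a finite list of integer linear inequalities $\ell_1 \ge 0, \dots, \ell_r \ge 0$ defining $\Sigma_{\mathbb{Q}}$. Fixing a dominant weight $\lambda = \sum k_i \omega_i$ and slicing the cone yields a polytope $P_\lambda$ whose lattice points contain every essential exponent of highest weight $\lambda$; saturation is precisely the assertion that there are no further lattice points, i.e. that every $\sigma \in \Sigma_{\mathbb{Q}} \cap (\mathfrak{t}^*_{\mathbb{Z}} \oplus \mathbb{Z}^N)$ already lies in $\Sigma$.

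Since Conjecture \ref{conjecture1} — established for $B_n$ and $D_n$ by the explicit bases of this paper — exhibits $\Sigma$ as generated by the fundamental essential signatures, saturation reduces to a \emph{normality} statement: every lattice point of $\Sigma_{\mathbb{Q}}$ should be a non-negative integer combination of fundamental essential signatures. I would prove this by induction on the total degree $\sum k_i$ of the highest weight, the inductive step being a \textbf{decomposition lemma}: for $\sigma \in \Sigma_{\mathbb{Q}} \cap (\mathfrak{t}^*_{\mathbb{Z}} \oplus \mathbb{Z}^N)$ with some $k_i > 0$, produce a fundamental essential signature $\tau$ of weight $\omega_i$ such that $\sigma - \tau$ still lies in both $\Sigma_{\mathbb{Q}}$ and the lattice. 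Peeling off such a $\tau$ repeatedly drives $\lambda$ to $0$ and writes $\sigma$ as a sum of generators. Equivalently — and this supplies a second, Ehrhart-theoretic route — saturation holds if and only if $P_\lambda$ contains exactly $\dim V(\lambda)$ lattice points for every dominant $\lambda$ (the essential ones always inject, and number $\dim V(\lambda)$), which by the Weyl dimension formula becomes a concrete counting identity.

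The decomposition lemma is where I expect essentially all of the difficulty to lie, since the subtracted generator $\tau$ must be chosen so that \emph{every} inequality $\ell_j$ survives the subtraction. This demands a precise understanding of which facets of $P_\lambda$ are active at $\sigma$ and of how much each $\ell_j$ can be made to decrease; the argument cannot be uniform, because the inequalities coming from the interaction of short and long roots in $B_n$, and those attached to the two fork nodes in $D_n$, have no analogue in the already settled cases $A_n$ and $C_n$. I would therefore organise the generators by node $i$ and verify, root system by root system, that some extreme generator for $\omega_i$ can be subtracted within the cone, the most delicate bookkeeping occurring along the branch node of $D_n$ where the two spin fundamental weights meet. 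Once the decomposition lemma holds across all facets, normality of $\Sigma_{\mathbb{Q}}$, and hence the saturation $\Sigma = \Sigma_{\mathbb{Q}} \cap (\mathfrak{t}^*_{\mathbb{Z}} \oplus \mathbb{Z}^N)$, follows formally.
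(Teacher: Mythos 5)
Your proposal correctly identifies what saturation amounts to (normality of the lattice points of $\Sigma_{\mathbb{Q}}$ with respect to the chosen generators, or equivalently the counting identity $|P_\lambda\cap\mathbb{Z}^N|=\dim V(\lambda)$), but it does not prove it: the entire content is concentrated in the ``decomposition lemma,'' which you yourself flag as containing essentially all of the difficulty and then leave as a programme (``organise the generators by node $i$ and verify, root system by root system\ldots''). A direct peeling argument against the full lists of inequalities in Theorems \ref{ineqdn} and \ref{ineqbn} is exactly the kind of facet-by-facet bookkeeping that is hard to carry out uniformly in the rank $n$, and no candidate $\tau$ is exhibited for any facet configuration. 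There is also a premise error: the paper establishes Conjecture \ref{conjecture1}$'$, not Conjecture \ref{conjecture1}; the generating set necessarily includes the non-fundamental weights $2\omega_n$ (for $B_n$) and $2\omega_{n-1}$, $2\omega_n$, $\omega_{n-1}+\omega_n$ (for $D_n$), so a peeling induction on $\sum k_i$ would also have to subtract these degree-two generators, which your scheme does not account for.

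The paper avoids the global normality computation altogether by inducting on rank. Saturation for $B_n$ (resp.\ $D_{n+1}$) is reduced to saturation for $D_n$ via the forgetful projection $\psi$ onto the $D_n$-part of the exponents: Propositions \ref{conj3bn} and \ref{conj3dn} show that the defining inequalities of $\Sigma_{B_n}$ (resp.\ $\Sigma_{D_{n+1}}$) are the pullbacks of those for $\Sigma_{D_n}$ together with the lifting inequalities $k_i\geq s_i$ (resp.\ (\ref{ineq})), and Lemmas \ref{lift} and \ref{lift2} show that every integer point of the small auxiliary cone of lifting parameters is realised by an essential signature lying over a given essential $\tau$ for $D_n$. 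The only place a Hilbert-basis/peeling argument actually occurs in the paper is for that auxiliary cone, where the generators and the step-by-step subtraction algorithm are written down explicitly. If you want to salvage your approach, the realistic path is to reorganise it along this projection, so that your decomposition lemma only ever has to be verified for the low-dimensional lifting cone rather than for all of $\Sigma_{\mathbb{Q}}$.
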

Conjecture \ref{conjecture2} claims that the bases of $V(\lambda)$ are parametrized by lattice points of flat sections of some polyhedral cone.

  It is known (see \cite{[Li]}) that for Lusztig's and string bases the semigroup $\Sigma$ is finitely generated and Conjecture \ref{conjecture2} holds. But in these cases it is difficult to give an explicit description for the semigroup $\Sigma$, in particular, the generators of $\Sigma$ are not known.

The next conjecture refines the structure of the polyhedral cone in Conjecture \ref{conjecture2}.
\begin{hypothesis}
\label{conjecture3}
 There exist a family of subsets $M_{i}\subset
  \{1,\dots,N\}$ and a family of elements $l_{i}\in \mathfrak{t}_{\mathbb{Z}}$ such that the set of essential signatures $\sigma=(\lambda;p_{1},\dots,p_{N})$ of highest weight $\lambda$ is given by the inequalities

  $$
  \sum_{j\in M_{i}}p_{j}\leq \lambda(l_{i}).
  $$

\end{hypothesis}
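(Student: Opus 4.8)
The plan is to reduce the abstract statement to an explicit combinatorial description of the essential signatures, obtained for a carefully chosen numbering of $\Delta_{+}$ and a carefully chosen monomial order, and then to read the defining inequalities off that description. For $\mathfrak{g}$ of type $B_n$ or $D_n$ I would fix the standard realization of the roots (the $\pm e_i \pm e_j$, together with the short roots $\pm e_i$ in type $B_n$), number the positive roots so that roots higher in the root poset receive smaller indices, and take the degree-lexicographic order on $\mathbb{Z}_{\ge 0}^{N}$ induced by this numbering. The goal is the two-sided statement: exhibit a finite family $(M_i, l_i)$ for which (i) every essential $\sigma$ satisfies $\sum_{j \in M_i} p_j \le \lambda(l_i)$ for all $i$, and (ii) every signature $\sigma$ of highest weight $\lambda$ satisfying all of these inequalities is essential.

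For the first (necessity) direction I would, for each prospective inequality, produce an explicit straightening relation in $U(\mathfrak{u}^{-})$ applied to $v_{\lambda}$. Two mechanisms generate such relations: the commutators $[e_{-\alpha}, e_{-\beta}] = \pm\, e_{-(\alpha+\beta)}$ whenever $\alpha + \beta \in \Delta_{+}$, which reorder a monomial modulo strictly smaller monomials, and the $\mathfrak{sl}_2$-relations $e_{-\alpha}^{\,\lambda(\alpha^{\vee})+1} v_{\lambda} = 0$ for simple $\alpha$ together with their propagation along chains of roots. Iterating these along a chain indexed by a set $M_i$ yields a relation whose leading monomial is $\prod_{j \in M_i} e_{-\alpha_j}^{p_j}$ of total degree $\sum_{j \in M_i} p_j$ and which collapses $v(\sigma)$ into smaller vectors once this total exceeds the integer $\lambda(l_i)$; here $l_i \in \mathfrak{t}_{\mathbb{Z}}$ is the coweight recording the relevant fundamental coordinates of $\lambda$. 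This both forces the inequality on every essential $\sigma$ and pins down the combinatorial data $(M_i, l_i)$.

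For the second (sufficiency) direction I would count. Let $P_\lambda$ denote the lattice points satisfying all the inequalities. Since the vectors $v(\sigma)$ span $V(\lambda)$ and the essential ones form a basis, it suffices to prove $|P_\lambda| = \dim V(\lambda)$: the necessity direction gives $\{\text{essential }\sigma\} \subseteq P_\lambda$, and equality of cardinalities then forces equality of sets. I would establish this numerical identity by comparing the weight-graded generating function of $P_\lambda$ with the character of $V(\lambda)$, reducing it to the Weyl character formula; alternatively, one first checks it for fundamental $\lambda$ and then propagates it, using that the families $P_\lambda$ are compatible under the semigroup law of Proposition~\ref{1} (so that the fundamental case is decisive).

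The main obstacle is the combinatorics specific to $B_n$ and $D_n$. In type $A_n$ the admissible chains $M_i$ are exactly Dyck paths and the inequalities are those of the Feigin--Fourier--Littelmann--Vinberg polytope; here the short roots of $B_n$ and the branching node of $D_n$ force the admissible chains to have a more intricate shape, and the delicate point is to prove that the list of inequalities so obtained is \emph{complete} --- that no further straightening relation imposes a constraint outside the family $(M_i, l_i)$, equivalently that every facet of $P_\lambda$ has a normal with $0/1$ coefficients on the $p$-variables. Securing completeness, and with it the correct choice of monomial order making the leading terms align with the chains, is where essentially all of the work and case analysis will be concentrated.
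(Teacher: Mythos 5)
This statement is not a theorem of the paper: it is Conjecture 3 (Vinberg's third conjecture), and the paper offers no proof of it for $B_n$ or $D_n$. On the contrary, the introduction states explicitly that Conjectures 1--3 are known only for $A_n$, $C_n$, $G_2$ and that ``it looks plausible that Conjecture \ref{conjecture3} is wrong in other cases.'' What the paper actually proves for $B_n$ and $D_n$ is Conjectures 1$'$ and 2 (Theorem \ref{main}), together with an explicit list of inequalities cutting out $\Sigma$ (Theorems \ref{ineqdn} and \ref{ineqbn}). So there is no ``paper's own proof'' to compare yours against, and a correct submission here would either prove the conjecture outright or recognize that it is being left open.

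Your plan has a concrete gap exactly at the point you flag as the hard part. Conjecture \ref{conjecture3} requires every defining inequality to have the form $\sum_{j\in M_i}p_j\le\lambda(l_i)$, i.e.\ coefficient $0$ or $1$ on each exponent. The inequalities the paper actually derives for $\Sigma_{D_n}$ and $\Sigma_{B_n}$ contain terms such as $p_{i,k}^{+}+p_{i,k}^{-}-p_{i+1,k}^{+}-p_{i+1,k}^{-}$ and $s_i-s_{i+1}$, i.e.\ coefficients $-1$ on some exponents. Your necessity argument (straightening relations along chains, FFLV-style) and your counting argument for sufficiency are the standard strategy that succeeds in type $A_n$ and $C_n$, but the step ``every facet of $P_\lambda$ has a normal with $0/1$ coefficients on the $p$-variables'' is precisely what fails to materialize here for the order and numeration used in the paper; nothing in your sketch supplies a reason it should hold for some other choice. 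In addition, your sufficiency step implicitly assumes that validity for fundamental $\lambda$ propagates to all $\lambda$ via the semigroup law; that propagation requires knowing the semigroup is generated in fundamental degrees (Conjecture \ref{conjecture1}), which the paper establishes only in the weaker form of Conjecture \ref{conjecture1}$'$ for these types (the generating set includes $2\omega_n$, $2\omega_{n-1}$, $\omega_{n-1}+\omega_n$). As written, the proposal cannot be completed into a proof of the statement, and the statement itself should be treated as open (and doubtful) in types $B_n$ and $D_n$.
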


Conjectures \ref{conjecture1}, \ref{conjecture2}, \ref{conjecture3} (for a certain homogeneous order on signatures) were proved for Lie algebras of types $A_n$, $C_n$ and $G_2$ (\cite{[FFL1]}, \cite{[FFL2]}, \cite{[G]}). It looks plausible that Conjecture \ref{conjecture3} is wrong in other cases.

Conjectures \ref{conjecture1}, \ref{conjecture2} were also proved for certain homogeneous orders in cases $B_3$, $D_4$ (\cite{[T]}, \cite{[G1]}). Moreover, inequalities defining $\Sigma$ were described.

 Finally, the semigroup $\Sigma$ was described explicitly in case $B_n$ for some numeration of positive roots and monomial order such that Conjectures \ref{conjecture1}$'$ (for some set of dominant weights) and \ref{conjecture2} are true (\cite{[Ì]}).

The main result of the article is the following theorem (see Theorem \ref{main}):
\begin{theor*}
 The following statements are true for $D_n$, $B_n$ ($n\geq2$):

For some fixed numeration of positive roots and monomial order on signatures, the semigroup $\Sigma$ is saturated and generated by essential signatures of highest weights in the set $$\{\omega_1,\ldots,\omega_n,2\omega_{n-1},2\omega_n,\omega_{n-1}+\omega_n\}\quad\textrm{for $D_n$}$$
and in the set
 $$\{\omega_1,\ldots,\omega_n,2\omega_{n}\}\quad\textrm{for $B_n$, respectively}. $$

\end{theor*}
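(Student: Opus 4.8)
The plan is to reduce the theorem to an explicit polyhedral description of the essential exponents. First I would fix a numeration of $\Delta_+$ adapted to the standard $\varepsilon$-coordinate realization of the positive roots of $B_n$/$D_n$, together with a compatible homogeneous monomial order, and then attach to every dominant $\lambda$ a candidate set $S(\lambda)\subset\mathbb{Z}_{\ge0}^N$ cut out by inequalities of the shape in Conjecture \ref{conjecture3}. The whole argument then amounts to proving that the exponents of the essential signatures of weight $\lambda$ are exactly $S(\lambda)$. As a base case I would compute the essential signatures for the generating weights directly: for the exterior-power representations $\omega_1,\dots,\omega_{n-2}$ and the defining representation this is a short check, while for the spinor nodes one sees that $S(\omega_{n-1})$, $S(\omega_n)$ are too small to generate. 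Indeed, the vectors $v(\sigma)$ coming from twice a half-spin weight are not products of two spinor vectors, which is precisely why $2\omega_{n-1},2\omega_n,\omega_{n-1}+\omega_n$ (resp.\ $2\omega_n$ for $B_n$) must be adjoined to the generating set; this is the failure of Conjecture \ref{conjecture1} in the orthogonal case. Writing down these spinor sections explicitly is the combinatorial heart of the base case.

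Next I would prove the two inclusions that pin down $S(\lambda)$. For the spanning (upper) bound, I would show that for every $\sigma$ of weight $\lambda$ the vector $v(\sigma)$ lies in $\langle v(\tau):\tau\in S(\lambda)\rangle+\langle v(\tau):\tau<\sigma\rangle$ by \emph{straightening}: using the commutation relations $[e_{-\alpha},e_{-\beta}]=c_{\alpha,\beta}\,e_{-(\alpha+\beta)}$ in $U(\mathfrak{u}^{-})$ and the quadratic (Serre) relations, any exponent tuple violating a defining inequality of $S(\lambda)$ is rewritten as a combination of strictly smaller tuples. This bounds the number of essential signatures of weight $\lambda$ from above by $|S(\lambda)|$. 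For the matching lower bound I would count $|S(\lambda)|$ and compare it with $\dim V(\lambda)$: the cleanest route is to recognize $S(\lambda)$ as the lattice points of a normal polytope whose Ehrhart/character generating function equals $\sum_\lambda \dim V(\lambda)\,e^{\lambda}$, equivalently to identify $\{v(\sigma):\sigma\in S(\lambda)\}$ with a monomial basis of the corresponding PBW-degenerate module, whose dimension is $\dim V(\lambda)$. Since essential signatures always give a basis (Proposition 1), the chain $\#\{\text{essential of weight }\lambda\}\le|S(\lambda)|=\dim V(\lambda)$ forces equality and identifies the essential set with $S(\lambda)$.

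The two remaining assertions then follow from the polyhedral structure. I would verify the Minkowski-sum identity $S(\lambda)+S(\mu)=S(\lambda+\mu)$ directly on the defining inequalities, choosing the numeration and the order so that the latter is compatible with this addition; combined with the semigroup property of Proposition \ref{1}, this yields that $\Sigma=\bigsqcup_\lambda(\{\lambda\}\times S(\lambda))$ is generated by its sections over the listed weights, since every dominant $\lambda$ decomposes over those weights and the corresponding sections add by Minkowski sum. Saturation is then automatic: because $S(\lambda)$ is defined by exactly the inequalities that cut out the rational cone $\Sigma_{\mathbb{Q}}$, one gets $\Sigma=\Sigma_{\mathbb{Q}}\cap(\mathfrak{t}_{\mathbb{Z}}^{*}\oplus\mathbb{Z}^{N})$.

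The main obstacle I expect is the spanning/straightening step at the two spin nodes of $D_n$. Unlike $A_n$ and $C_n$, the presence of two half-spin fundamental weights makes the relevant quadratic relations among the $e_{-\alpha}$ genuinely two-dimensional, and one must organize the reduction so that every forbidden monomial decreases strictly in the chosen order without cycling, \emph{and} so that the same order remains compatible with the Minkowski-sum identity required for generation. Pinning down a single numeration and monomial order that simultaneously makes the straightening terminate, matches the dimension count, and is additive under Minkowski sum is the delicate part; once such a choice is fixed, the dimension matching and the saturation and generation conclusions are comparatively formal.
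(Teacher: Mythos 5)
Your outline is a genuinely different route from the paper's --- it is essentially the FFL/Makhlin-style strategy (explicit polytope, straightening for spanning, Ehrhart count for the lower bound, Minkowski sums for generation) --- but as written it rests on assumptions that fail in precisely the cases at hand, so I have to flag it as having a genuine gap rather than as an alternative proof. The central problem is your starting point: you propose to cut out the candidate set $S(\lambda)$ by ``inequalities of the shape in Conjecture \ref{conjecture3}'', i.e.\ $\sum_{j\in M_i}p_j\le\lambda(l_i)$ with unit coefficients on a subset of the exponents. The paper's own answer (Theorems \ref{ineqdn} and \ref{ineqbn}) shows that the defining inequalities for $\Sigma_{D_n}$ and $\Sigma_{B_n}$ necessarily contain terms such as $-p_{i+1,k}^{+}-p_{i+1,k}^{-}$ with \emph{negative} coefficients, and the introduction explicitly warns that Conjecture \ref{conjecture3} is plausibly false outside $A_n$, $C_n$, $G_2$. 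So the object $S(\lambda)$ you intend to pin down does not exist in the form you posit, and the two pillars built on it --- the straightening argument that rewrites any tuple violating a Conjecture-3-type inequality, and the Ehrhart identity $|S(\lambda)|=\dim V(\lambda)$ --- are exactly the steps that are known to break for the orthogonal types (this is why the $B_n$ polytope of Makhlin and Backhaus--Kus had to be modified, and why $D_n$ resisted this approach). You correctly identify the straightening at the spin nodes as the main obstacle, but identifying an obstacle is not the same as overcoming it; no mechanism is given for why the reduction terminates or why the count closes. A secondary issue: the Minkowski-sum step $S(\lambda)+S(\mu)=S(\lambda+\mu)$ cannot hold for all pairs of fundamental weights (otherwise $\Sigma=\Sigma^f$ and the extra generators $2\omega_{n-1},2\omega_n,\omega_{n-1}+\omega_n$ would be unnecessary), so you would need a carefully restricted additivity statement over the enlarged generating set, which is not formulated.

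For contrast, the paper avoids all of this by never writing down a straightening law in $U(\mathfrak{u}^-)$ and never computing an Ehrhart polynomial. It proceeds by induction on the rank, starting from $D_2=A_1+A_1$: the essential signatures of the generating highest weights for $B_n$ and $D_{n+1}$ are computed from those of $D_n$ via branching of the fundamental (exterior-power and spin) representations (Lemmas \ref{lemma} and \ref{lemma2}); a forgetful semigroup homomorphism $\psi:\Sigma^{(\cdot)}_{B_n}\to\Sigma^{(\cdot)}_{D_n}$ (resp.\ $\Sigma^{(\cdot)}_{D_{n+1}}\to\Sigma^{(\cdot)}_{D_n}$) is constructed together with lifting lemmas; and generation is established not through Minkowski sums of polytopes but through the algebraic criterion of Theorem \ref{theorem1} --- properties $(*)$ (the binomial ideal $J$ is generated in degree $2$, checked via admissible operations on decompositions) and $(**)$ (essential signatures of highest weight $\lambda_i+\lambda_j$ decompose, checked by comparing cardinalities with branching identities for $\dim V(\lambda_i+\lambda_j)$). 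Saturation then comes from the inductively assembled inequalities $(\natural)$, $(\sharp)$ together with $k_i\ge s_i$ and (\ref{ineq}). If you want to salvage your approach you would have to first discover the correct (non-Conjecture-3-shaped) inequalities --- which in this paper are an output of the induction, not an input --- and then carry out the straightening and counting for them; that is a substantially harder path than the one the paper takes.
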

Here the numeration of fundamental weights is the standard one, described in Section \ref{orthcase}. Thus Conjectures \ref{conjecture1}$'$ and 2 are true for $B_n$ and $D_n$ with respect to the chosen numeration and (not homogeneous) monomial order. Moreover,
we find inequalities defining the cone $\Sigma_{\mathbb{Q}}$ for $B_n$ and $D_n$ (Theorems~\ref{ineqbn} and \ref{ineqdn}, respectively).
 All these results were obtained independently from \cite{[Ì]} by using a completely different method.

Let us briefly describe the structure of the paper. In Section \ref{Conj1} we give a necessary and sufficient condition for Conjecture~\ref{conjecture1} or Conjecture~\ref{conjecture1}$'$ to be true, and we explain how this condition can be verified. In the rest of the article we construct monomial bases for $B_n$ and $D_n$ by using an inductive procedure, starting from $D_2=A_1+A_1$, and prove the above theorem.
\section*{Acknowledgement}
We are grateful to the referee for his careful reading and insightful
 comments and suggestions.

\section{The semigroup of essential signatures}
\label{2}

Here we show, following an argument of Vinberg, that the essential signatures of all highest weights form a semigroup.

Let $G$ be a simply connected simple complex algebraic group such that $\mathop{\mathrm{Lie}}G=\mathfrak{g}$. Let $T$ be the maximal torus in $G$ such that $\mathop{\mathrm{Lie}}T=\mathfrak{t}$ and $U$ be the maximal unipotent subgroup of $G$ such that
 $\mathop{\mathrm{Lie}}U=\mathfrak{u}$.
 Consider the homogeneous space $G/U$.
 Let $B=T\rightthreetimes U$ be the Borel subgroup. Then

$$
 \mathbb{C}[G/U]=\bigoplus_{\lambda} \mathbb{C}[G]_{\lambda}^{(B)},
$$
where $$\mathbb{C}[G]_{\lambda}^{(B)}=\{f\in\mathbb{C}[G]\mid f(gtu)=\lambda(t)f(g),\, \forall g\in G, t\in T, u\in U\}$$
is the subspace of eigenfunctions of weight $\lambda$ for $B$ acting on $\mathbb{C}[G]$ by right translations of an argument.
 Each subspace $\mathbb{C}[G]_{\lambda}^{(B)}$ is finite-dimensional and is isomorphic as a $G$-module (with respect to the action of $G$ by left translations of an argument) to the space $V(\lambda)^{*}$ of linear functions on $V(\lambda)$ (see, e.g., \cite{[Ï]}, Theorem 3). The isomorphism is given by the formula:
$$
 V(\lambda)^{*}\ni\omega \longmapsto f_{\omega}\in\mathbb{C}[G]_{\lambda}^{(B)},
 \quad\textrm{where}\quad f_{\omega}(g)=\langle\omega, g\emph{v}_{\lambda}\rangle.
$$

Let $U^{-}$ be the maximal unipotent subgroup such that $\mathop{\mathrm{Lie}}U^{-}=\mathfrak{u^{-}}$. The function $f_{\omega}$ is uniquely determined by its restriction to the dense open subset $U^{-}\cdot$$T\cdot$$U$; moreover
\begin{multline*}
 $$f_{\omega}(u^{-}\cdot t\cdot u)=\langle\omega
,u^{-} tu\emph{v}_{\lambda}\rangle=\langle\omega
,\lambda(t)u^{-}\emph{v}_{\lambda}\rangle=\lambda(t)f_{\omega}(u^{-}),\\ \quad \forall u\in U, u^{-}\in U^{-}, t\in T.
$$
\end{multline*}
Next, $U^{-}=U_{-\alpha_{1}}\cdot\ldots\cdot U_{-\alpha_{N}}$, where
$U_{\alpha}=\{\exp(ze_{\alpha})\mid$ $z\in\mathbb{C}\}$ (see \cite[Sec. X, \S 28.1]{[X]}). Hence

$$
 u^{-}=\exp(z_{1}e_{-\alpha_{1}})\cdot\ldots\cdot\exp(z_{N}e_{-\alpha_{N}}).
$$
Thus we obtain
$$
f_{\omega}(u^{-})=\left\langle\omega,
\exp(z_{1}e_{-\alpha_{1}})\cdot\ldots\cdot\exp(z_{N}e_{-\alpha_{N}})
\cdot \emph{v}_{\lambda}\right\rangle=\sum_{\sigma=(\lambda;p_{1},\dots,p_{N})}
\prod_{i=1}^{N} z_{i}^{p_{i}}
\langle\omega,\emph{v}(\sigma)\rangle.
$$

\begin{proposition}
 A signature $\sigma$ is essential if and only if
$\prod z_{i}^{p_{i}}$ is the least term in
 $f_{\omega}|_{U^{-}}$ for some
$\omega \in V(\lambda)^{*}$ in the sense of the chosen monomial order on $\mathbb{Z}_{\geq0}^{N}$.
\end{proposition}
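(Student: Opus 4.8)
The plan is to read off, directly from the expansion of $f_\omega|_{U^-}$ displayed just above the statement, that the coefficient of the monomial $\prod_i z_i^{p_i}$ is exactly the pairing $\langle\omega,v(\sigma)\rangle$, and then to convert the defining condition of essentiality — a statement about linear (in)dependence inside $V(\lambda)$ — into a statement in the dual space $V(\lambda)^{*}$ by a standard separation argument. First I would fix the highest weight $\lambda$ and recall that the product map $(z_1,\ldots,z_N)\mapsto\exp(z_1e_{-\alpha_1})\cdots\exp(z_Ne_{-\alpha_N})$ identifies $U^{-}$ with affine space $\mathbb{C}^{N}$, so that $f_\omega|_{U^-}$ is genuinely a polynomial in $z_1,\ldots,z_N$ whose coefficients are uniquely determined. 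Writing $m_\sigma=\prod_i z_i^{p_i}$, the expansion reads $f_\omega|_{U^-}=\sum_\tau\langle\omega,v(\tau)\rangle\,m_\tau$, the sum over signatures $\tau$ of highest weight $\lambda$, with distinct $\tau$ giving distinct monomials. Since signatures of equal highest weight are compared by the fixed monomial order on their exponent tuples, the assertion that $m_\sigma$ is the least term of $f_\omega|_{U^-}$ unwinds to the two conditions $\langle\omega,v(\sigma)\rangle\neq0$ and $\langle\omega,v(\tau)\rangle=0$ for every $\tau<\sigma$.

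For the implication ``least term $\Rightarrow$ essential'' I would argue by contraposition. If $\sigma$ were not essential, say $v(\sigma)=\sum_{\tau<\sigma}c_\tau v(\tau)$, then pairing with any $\omega$ that annihilates all $v(\tau)$ with $\tau<\sigma$ would force $\langle\omega,v(\sigma)\rangle=0$; hence no $\omega$ could make $m_\sigma$ the least term, and the existence of such an $\omega$ rules out any relation of this form.

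For the converse ``essential $\Rightarrow$ least term'' I would use finite-dimensional duality. Put $W=\langle v(\tau)\mid\tau<\sigma\rangle\subseteq V(\lambda)$; essentiality says precisely $v(\sigma)\notin W$, so the image of $v(\sigma)$ in $V(\lambda)/W$ is nonzero. Choosing a functional on the quotient that does not vanish there and pulling it back yields $\omega\in V(\lambda)^{*}$ with $\langle\omega,v(\sigma)\rangle\neq0$ and $\omega|_W=0$, i.e. $\langle\omega,v(\tau)\rangle=0$ for all $\tau<\sigma$. For this $\omega$ the monomial $m_\sigma$ occurs with nonzero coefficient while no smaller monomial occurs, so $m_\sigma$ is the least term of $f_\omega|_{U^-}$.

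This argument is essentially routine, and the only point I would treat as the main obstacle is making the dictionary precise: confirming that distinct signatures of a fixed highest weight index distinct monomials, so that coefficient extraction is unambiguous, and that the monomial order defining ``least term'' is literally the order on exponents used in the definition of essential signatures at fixed $\lambda$. Once this correspondence between coefficients of $f_\omega|_{U^-}$ and the pairings $\langle\omega,v(\tau)\rangle$ is in place, both implications reduce to one-line separation arguments in $V(\lambda)$ and its dual.
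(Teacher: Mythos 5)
Your proof is correct and follows essentially the same route as the paper: identify the coefficient of $\prod z_i^{p_i}$ in $f_\omega|_{U^-}$ with $\langle\omega,v(\sigma)\rangle$, so that ``least term'' translates into $\omega$ annihilating all $v(\tau)$ with $\tau<\sigma$ while being nonzero on $v(\sigma)$, and then both implications are immediate separation arguments. The only (immaterial) difference is in the converse: the paper takes $\omega$ dual to $v(\sigma)$ in the essential basis, whereas you take any functional vanishing on $\langle v(\tau)\mid\tau<\sigma\rangle$ and nonzero on $v(\sigma)$, which is if anything slightly more direct.
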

\begin{proof}
Let $\prod z_{i}^{p_{i}}$ be the least term in $f_{\omega}|_{U^{-}}$ for some $\omega\in V(\lambda)^{*}$. Then $\omega$ vanishes on all vectors $\emph{v}(\tau)$ with $\tau<\sigma$ and is nonzero at  $\emph{v}(\sigma)$. Consequently, $\emph{v}(\sigma)$ cannot be expressed via $\emph{v}(\tau)$ with $\tau<\sigma$, and hence  $\sigma$ is essential.

Conversely, let $\sigma$ be essential. Consider a function $\omega$ that vanishes on $\emph{v}(\tau)$ for all essential $\tau$ except  $\sigma$. Obviously, $f_{\omega}|_{U^{-}}$ has the desired least term.
\end{proof}

\begin{proof}[Proof of Proposition \ref{1}]
Suppose that the least terms in $f|_{U^{-}}$ and $g|_{U^{-}}$ correspond to the essential signatures $\sigma$ and $\pi$ of highest weights $\lambda$ and $\mu$, respectively. Then the least term in $(f\cdot g)|_{U^{-}}$ corresponds to the signature $\sigma+\pi$ of highest weight $\lambda+\mu$. Hence $\sigma+\pi$ is essential.
\end{proof}

\section {Conjecture 1$'$}
\label{Conj1}
Fix some dominant weights $\lambda_1,\ldots,\lambda_m$. Let $S$ be the semigroup of dominant weights generated by $\lambda_1,\ldots,\lambda_m$.
Denote by $\Sigma^{(\lambda_1,\ldots,\lambda_m)}=\Sigma_{\mathfrak{g}}^{(\lambda_1,\ldots,\lambda_m)}$ the semigroup of essential signatures generated by the essential signatures of highest weights in $\{\lambda_1,\ldots,\lambda_m\}$; $\Sigma(S)$ stands for the semigroup of essential signatures of highest weights in $S$.

Here we give a necessary and sufficient condition for $\Sigma(S)=\Sigma^{(\lambda_1,\ldots,\lambda_m)}$.

It is known that the algebra $\mathbb{C}[G/U]$ is generated by the subspaces $\mathbb{C}[G/U]^{(B)}_{\omega_i}$ $(i=1,\ldots,n)$.
 Set $X=\mathop{\mathrm{Spec}}\bigoplus_{\lambda\in S}\mathbb{C}[G/U]^{(B)}_{\lambda}$. Then (see \cite{[V-Ï]})
 $$X\simeq\overline{G(v_{\lambda_1}+\ldots+v_{\lambda_m})}\subseteq V(\lambda_1)\oplus\ldots\oplus V(\lambda_m)=V.$$
  Let $I$ be the vanishing ideal of $X$ in $\mathbb{C}[V]$, and
  $x_{i}$ be the coordinates on $V$ corresponding to the basis $\{v(\sigma_i)\mid \sigma_i \textrm{ essential}\}$. So we have a surjective algebra homomorphism
  $$\phi:\quad\mathbb{C}[V]\rightarrow\mathbb{C}[X],
  $$
  such that ker$(\phi)=I$.

  Consider an arbitrary monomial $x_{i_{1}}\ldots x_{i_{k}}$ in $\mathbb{C}[V]$. Set $\sign(x_{i_{1}}\ldots x_{i_{k}})=\sigma_{i_{1}}+\ldots+\sigma_{i_{k}}=\sigma$. We call $\sigma$ $\emph{the signature of the monomial}$ $x_{i_{1}}\ldots x_{i_{k}}$. This defines a grading of $\mathbb{C}[V]$ by $\Sigma^{(\lambda_1,\ldots,\lambda_m)}$. The degree of any polynomial $h\in\mathbb{C}[V]$ homogeneous with respect to this grading is also denoted by $\sign(h)$. One has:
  $$
  \mathbb{C}[V]\xrightarrow{\phi}\mathbb{C}[X]\subset\mathbb{C}[U^{-}\cdot T].
  $$
  We have the grading by the group $\mathfrak{t}_{\mathbb{Z}}^{*}\oplus
 \mathbb{Z}^{N}$ on the algebra $\mathbb{C}[U^{-}\cdot T]$. We also denote by $\sign(h)$ the degree of any homogeneous polynomial $h\in\mathbb{C}[U^{-}\cdot T]$ with respect to this grading.
  For any $f\in\mathbb{C}[V]$ or $f\in\mathbb{C}[U^{-}\cdot T]$, denote by $\intt(f)$ the least term of $f$ with respect to these gradings.

   The map $\phi$ is not a graded algebra homomorphism, but it is compatible with the decreasing filtrations, induced by the corresponding gradings. We have coarser gradings on $\mathbb{C}[V]$, $\mathbb{C}[X]$, and $\mathbb{C}[U^{-}\cdot T]$ by $\mathfrak{t}_{\mathbb{Z}}^{*}$. The map $\phi$ is a graded algebra homomorphism for coarser gradings. Notice, that the homogeneous components of $\mathbb{C}[V]$ and $\mathbb{C}[X]$ are finite-dimensional.

   Let $\intt(I)$ be the ideal spanned by $\intt(f), f\in I$.
  Denote by $J$ the ideal in $\mathbb{C}[V]$ spanned by the binomials
  $$x_{i_{1}}\ldots x_{i_{k}}-x_{j_{1}}\ldots x_{j_{l}} \quad\textrm{such that}\quad \sign(x_{i_{1}}\ldots x_{i_{k}})=\sign(x_{j_{1}}\ldots x_{j_{l}}).$$

 Let $f$ be a regular function on $V$, which can be expressed as a polynomial in $\{x_{i}\}$. Then $\phi(f)$ is a function on $X$, hence a polynomial in $\{z_j\}$, $j=1,\ldots,N$ (coordinates on $U^{-}$) and $\{t_k\}$, $k=1,\ldots,n$ (coordinates on $T$ corresponding to fundamental weights). Thus for any monomial $x_{i_1}\ldots x_{i_{s}}$ of signature $\sigma=(\lambda;p_1,\ldots,p_N)$ the least term of $\phi(x_{i_1}\ldots x_{i_{s}})$ is $z^{\sigma}:=t^{\lambda}\prod z^{p_j}_j$, where $t^{\lambda}:=t^{k_1}_1\ldots t^{k_n}_n$, $\lambda=\sum k_i\omega_i$.
 \begin{lemma}
 $\intt(I)\subset J$.
 \end{lemma}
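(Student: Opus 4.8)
The plan is to take an arbitrary $f\in I$ and show $\intt(f)\in J$; since $\intt(I)$ is by definition the span of such leading terms and $J$ is an ideal, this yields $\intt(I)\subset J$. First I would decompose $f$ into its $\sign$-homogeneous components and let $\sigma_0$ be the least signature occurring in $f$, so that $\intt(f)=\sum_m c_m\,m$, where the (finite) sum runs over the monomials $m$ in the $x_i$ of signature exactly $\sigma_0$ appearing in $f$. Because the order on $\mathfrak{t}^{*}_{\mathbb{Z}}\oplus\mathbb{Z}^{N}$ is total and $f$ has finitely many monomials, $\sigma_0$, and hence the lowest component $\intt(f)$, is well defined. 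Note the mild asymmetry: in $\mathbb{C}[V]$ the signature grading is not multiplicity-free, so $\intt(f)$ is a whole homogeneous component, whereas in $\mathbb{C}[U^{-}\cdot T]$ each graded piece is spanned by a single monomial $z^{\sigma}$.

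The key step is to extract one scalar equation from the hypothesis $\phi(f)=0$ by reading off a single coefficient in $\mathbb{C}[U^{-}\cdot T]$. I would use the fact recorded just before the lemma: for each monomial $m$ of signature $\sigma$ one has $\intt(\phi(m))=z^{\sigma}$, and every remaining term of $\phi(m)$ has signature strictly greater than $\sigma$. This is the compatibility of $\phi$ with the decreasing filtrations, combined with the fact that $\mathbb{C}[U^{-}\cdot T]$ is a domain with an ordered grading, so least terms multiply without cancellation. Consequently a monomial of signature $>\sigma_0$ contributes nothing to the $z^{\sigma_0}$-coefficient of $\phi(f)$, while each $m$ of signature $\sigma_0$ contributes exactly $c_m$. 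Hence the coefficient of $z^{\sigma_0}$ in $\phi(f)$ equals $\sum_m c_m$, and since $\phi(f)=0$ we conclude $\sum_m c_m=0$.

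Finally I would convert this into membership in $J$. Fixing any one monomial $m_0$ occurring in $\intt(f)$ and using $\sum_m c_m=0$, I rewrite $\intt(f)=\sum_m c_m\,(m-m_0)$. Each difference $m-m_0$ is a binomial with $\sign(m)=\sign(m_0)=\sigma_0$, hence lies in $J$ by definition of $J$; therefore $\intt(f)\in J$, which is what we want.

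I do not expect a serious obstacle, since the argument is short once the coefficient extraction is set up. The one point demanding genuine care is the claim $\intt(\phi(m))=z^{\sign(m)}$ with all other terms strictly higher: this is precisely what makes the $z^{\sigma_0}$-coefficient of $\phi(f)$ isolate the lowest component of $f$, and it relies on the order on signatures being total and on $\phi$ landing multiplicatively in the integral domain $\mathbb{C}[U^{-}\cdot T]$ so that leading terms of products are products of leading terms.
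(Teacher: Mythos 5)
Your proposal is correct and follows essentially the same route as the paper: both arguments observe that for a monomial $m$ of signature $\sigma$ one has $\intt(\phi(m))=z^{\sigma}$ with all other terms strictly greater, so $\phi(f)=0$ forces the coefficient of $z^{\sigma_0}$, namely the sum of the coefficients of $\intt(f)$, to vanish, placing $\intt(f)$ in $J$. Your write-up merely makes explicit the coefficient-extraction and the rewriting $\intt(f)=\sum_m c_m(m-m_0)$ that the paper leaves implicit.
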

 \begin{proof}
 Let $f\in I$, then $\phi(f)=0$. Let $\sigma=(\lambda;p_1,\ldots,p_N)=\sign(\intt(f))$. The least term of $\phi(\intt(f))$ is greater than $z^{\sigma}$, because $\phi(f)=0$. Hence $\intt(f)$ is a linear combination of monomials of signature $\sigma$ with the sum of coefficients equal to 0, i.e. $\intt(f)\in J$.
 \end{proof}

\begin{proposition}
$\Sigma(S)=\Sigma^{(\lambda_1,\ldots,\lambda_m)}$ if and only if $\intt(I)=J$.
\label{proposition4}
\end{proposition}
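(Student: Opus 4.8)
The plan is to prove both implications at once by comparing the graded pieces of $\mathbb{C}[V]/\intt(I)$ and $\mathbb{C}[V]/J$ in the two gradings. Observe first that $\Sigma^{(\lambda_1,\ldots,\lambda_m)}\subseteq\Sigma(S)$ always holds: a sum of essential signatures of the $\lambda_i$ is essential by Proposition \ref{1}, and its highest weight lies in $S$. For each dominant $\lambda$ write $a_\lambda$ for the number of $\sigma\in\Sigma(S)$ of highest weight $\lambda$ and $b_\lambda$ for the number of $\sigma\in\Sigma^{(\lambda_1,\ldots,\lambda_m)}$ of highest weight $\lambda$; then $b_\lambda\le a_\lambda$, and $\Sigma(S)=\Sigma^{(\lambda_1,\ldots,\lambda_m)}$ is equivalent to $a_\lambda=b_\lambda$ for all $\lambda$.

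Next I would compute the dimensions of $\mathbb{C}[V]/\intt(I)$ in the coarser grading by $\mathfrak{t}^{*}_{\mathbb{Z}}$. Since $\phi$ is a graded homomorphism for the coarser grading with $\ker\phi=I$, one has $(\mathbb{C}[V]/I)_\lambda\cong\mathbb{C}[X]_\lambda=\mathbb{C}[G/U]^{(B)}_\lambda\cong V(\lambda)^{*}$, whose dimension equals $a_\lambda$ because the essential signatures of highest weight $\lambda$ parametrize a basis of $V(\lambda)$. Each coarse component $\mathbb{C}[V]_\lambda$ is finite-dimensional, and the finer grading refines the coarser one, so within $\mathbb{C}[V]_\lambda$ the finer grading induces a filtration in which taking least terms is dimension-preserving; hence $\dim(\intt(I))_\lambda=\dim I_\lambda$ and therefore $\dim(\mathbb{C}[V]/\intt(I))_\lambda=\dim(\mathbb{C}[V]/I)_\lambda=a_\lambda$.

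I would then refine to the finer grading by $\Sigma^{(\lambda_1,\ldots,\lambda_m)}$, for which $\intt(I)$ and $J$ are homogeneous, so $a_\lambda=\sum_{\sign(\sigma)=\lambda}\dim(\mathbb{C}[V]/\intt(I))_\sigma$, the sum being over signatures $\sigma$ of highest weight $\lambda$. Fixing $\sigma$, let $m_\sigma$ be the number of monomials of signature $\sigma$; then $\mathbb{C}[V]_\sigma$ has dimension $m_\sigma$, and $J_\sigma$ is its codimension-one subspace of elements with coefficient-sum zero, so $\dim(\mathbb{C}[V]/J)_\sigma$ equals $1$ when $\sigma\in\Sigma^{(\lambda_1,\ldots,\lambda_m)}$ and $0$ otherwise. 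By the Lemma, $\intt(I)\subseteq J$, whence $\dim(\mathbb{C}[V]/\intt(I))_\sigma\ge\dim(\mathbb{C}[V]/J)_\sigma$; thus exactly the $b_\lambda$ signatures of highest weight $\lambda$ lying in $\Sigma^{(\lambda_1,\ldots,\lambda_m)}$ contribute, each by at least $1$. Consequently $a_\lambda\ge b_\lambda$, with equality precisely when every such contribution equals $1$, i.e. when $\dim(\mathbb{C}[V]/\intt(I))_\sigma=1$ for all $\sigma\in\Sigma^{(\lambda_1,\ldots,\lambda_m)}$.

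It remains to assemble the equivalence. Since $\intt(I)\subseteq J$ and both are finely homogeneous, $\intt(I)=J$ holds iff $(\intt(I))_\sigma=J_\sigma$ for every $\sigma$, iff $\dim(\mathbb{C}[V]/\intt(I))_\sigma=1$ for every $\sigma\in\Sigma^{(\lambda_1,\ldots,\lambda_m)}$; by the previous step this is equivalent to $a_\lambda=b_\lambda$ for all $\lambda$, hence, together with $\Sigma^{(\lambda_1,\ldots,\lambda_m)}\subseteq\Sigma(S)$, to $\Sigma(S)=\Sigma^{(\lambda_1,\ldots,\lambda_m)}$. The main obstacle is the second paragraph: one must justify that taking least terms is dimension-preserving on each finite-dimensional coarse component and that $(\intt(I))_\lambda$ is precisely the space of least terms of $I_\lambda$. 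The latter uses that the chosen order compares highest weights first, so the least term of any $f\in I$ already lies in a single coarse component; the former is the standard associated-graded dimension count, for which the finite-dimensionality of the coarse components is exactly what is needed. The remaining arguments are routine bookkeeping with the two gradings.
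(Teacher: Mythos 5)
Your proof is correct, but it follows a genuinely different route from the paper's. The paper argues the two implications separately and constructively: for $J\subseteq\intt(I)$ it takes a homogeneous $f\in J$ and repeatedly subtracts monomials matching the least term of $\phi(f_s)$, terminating because there are finitely many essential signatures of a fixed highest weight and thereby producing an element of $I$ with least term $f$; for the converse it chooses $f$ with $\sign(\intt(f))$ maximal over $\phi^{-1}\phi(f)$ and derives a contradiction. You instead prove both directions at once by comparing Hilbert functions: $\dim(\mathbb{C}[V]/\intt(I))_\lambda=\dim(\mathbb{C}[V]/I)_\lambda=\dim V(\lambda)=a_\lambda$ while $\dim(\mathbb{C}[V]/J)_\lambda=b_\lambda$, and the containment $\intt(I)\subseteq J$ turns $\intt(I)=J$ into the equality $a_\lambda=b_\lambda$ for all $\lambda$, i.e.\ into $\Sigma(S)=\Sigma^{(\lambda_1,\ldots,\lambda_m)}$. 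The two technical points you flag are the right ones and your justifications go through: $(\intt(I))_\lambda$ really is the span of least terms of $I_\lambda$ (the order compares highest weights first, $I$ is coarsely graded, and a monomial times a least term is again a least term, so the ideal generated by least terms coincides with their linear span), and the associated-graded count $\dim(\intt(I))_\lambda=\dim I_\lambda$ is valid on the finite-dimensional coarse components even though the fine components $\mathbb{C}[V]_\sigma$ are not one-dimensional. What the paper's reduction argument buys is that it is reused later (the degree-two reduction after Proposition \ref{proposition4} invokes the steps of that proof); what your argument buys is a shorter, symmetric proof that makes it transparent that the statement is an equality of two counting functions bounded in one direction by the Lemma.
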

\begin{proof}
 The equality $\Sigma(S)=\Sigma^{(\lambda_1,\ldots,\lambda_m)}$ means that for any polynomial $f\in\mathbb{C}[V]$ one has $\sign(\intt(\phi(f)))=\sign(\intt(\phi(x_{i_{1}}\ldots x_{i_{k}})))$ for some monomial $x_{i_{1}}\ldots x_{i_{k}}$.
 Assume that $\Sigma(S)=\Sigma^{(\lambda_1,\ldots,\lambda_m)}$ and let $f\in J$. We can suppose that $f$ is homogeneous for the signature grading. Let $\sigma=(\lambda;p_1,\ldots,p_N)=\sign(f)$. Then  $\intt(\phi(f))>z^{\sigma}$. There exists a monomial $x_{i_{1}}\ldots x_{i_{k}}$ such that $\intt(\phi(x_{i_{1}}\ldots x_{i_{k}}))=\intt(\phi(f))$. So, for some constant $c$ one has $\intt(\phi(f_1))>\intt(\phi(f))$, where $f_{1}=f-cx_{i_{1}}\ldots x_{i_{k}}$. We can continue this process by subtracting a monomial from $f_s$ ($s=1,2,\ldots$) to obtain $f_{s+1}$ such that $\intt(\phi(f_{s+1}))>\intt(\phi(f_{s}))$. The process will stop, since there exist finitely many essential signatures with fixed highest weight. Hence we obtain $f_{m}$ such that $\phi(f_{m})=0$ and $\intt(f_m)=f$. Thus $f\in \intt(I)$.

Conversely, suppose that $\intt(I)=J$. Let $f\in\mathbb{C}[V]$. We look for a monomial $x_{i_{1}}\ldots x_{i_{k}}$ such that  $\sign(\intt(\phi(x_{i_{1}}\ldots x_{i_{k}})))=\sign(\intt(\phi(f)))$. We can assume that $f$ is a polynomial such that the signature $\sign(\intt(f))$ is maximal over polynomials in $\phi^{-1}\phi(f)$. (Here we use that $I$ is graded by $\mathfrak{t}_{\mathbb{Z}}^{*}$ and the homogeneous components are finite-dimensional.) Let $h=\intt(f)$. If $h\notin J$, then we may take for $x_{i_1}\ldots x_{i_k}$ any monomial of $h$. Otherwise $h\in J$, hence there exist $f'\in I$ such that $h=\intt(f')$. Then $\phi(f-f')=\phi(f)$ and $\sign(\intt(f-f'))>\sign(\intt(f))$. A contradiction.
\end{proof}
Now we want to explain how the condition above can be verified. To prove the equality $\intt(I)=J$ it is enough to verify two properties:
\begin{enumerate}
\item the ideal $J$ is generated by polynomials of degree 2;
\item for any two weights $\lambda_i$, $\lambda_j$ any essential signature of highest weight $\lambda_i+\lambda_j$ belongs to $\Sigma^{(\lambda_1,\ldots,\lambda_m)}$.
\end{enumerate}
Indeed, if the first property holds then to show that $\intt(I)=J$ it is enough to prove that if $f\in J$ is a homogeneous (for the signature grading) polynomial of degree 2 then $f\in \intt(I)$. It follows from the proof of Proposition \ref{proposition4} (now $\sign(f)$ has the highest weight $\lambda_i+\lambda_j$) that it is true if the second property holds.

Now we discuss how the first property can be verified. Let $$\sigma=\sigma_1+\ldots+\sigma_k$$
 be a decomposition of $\sigma\in\Sigma^{(\lambda_1,\ldots,\lambda_m)}$ into a sum of essential signatures of highest weights in $\{\lambda_1,\ldots,\lambda_m\}$. Consider the following operations on the decomposition:
  \begin{enumerate}
  \item replacing a pair of signatures $\sigma_i, \sigma_j$ by a pair of essential signatures $\sigma'_i, \sigma'_j$ of  highest weights in $\{\lambda_1,\ldots,\lambda_m\}$ such that $\sigma_i+\sigma_j=\sigma'_i+\sigma'_j$;
  \item replacing a pair of signatures $\sigma_i, \sigma_j$ by an essential signature $\sigma'_s$ of highest weight in $\{\lambda_1,\ldots,\lambda_m\}$ such that $\sigma_i+\sigma_j=\sigma'_s$;
  \item replacing a signature $\sigma_s$  by a pair of essential signatures $\sigma'_i, \sigma'_j$ of highest weights in $\{\lambda_1,\ldots,\lambda_m\}$ such that $\sigma_s=\sigma'_i+\sigma'_j$.
  \end{enumerate}

   We call such operations \emph{admissible}. Obviously, the ideal $J$ is generated by  binomials of degree 2 if and only if for any $\sigma\in\Sigma^{(\lambda_1,\ldots,\lambda_m)}$ we can obtain a given decomposition $\sigma_1+\ldots+\sigma_k$ of $\sigma$ from any other decomposition $\tau_1+\ldots+\tau_l$ by applying admissible operations.

The second property can be verified as follows. First, we compute the number of signatures of highest weights $\lambda_{i}+\lambda_{j}$ which can be represented as a sum of essential signatures of highest weights in $\{\lambda_1,\ldots,\lambda_m\}$ for all $i,j$. Then we compare this number with $\dim V(\lambda_i+\lambda_j)$, which can be found by Weyl's dimension formula.
We arrive at the following statement:
\begin{theor}
Suppose that the following two properties hold:
\begin{enumerate}
\item[$(*)$] for any signature $\sigma\in\Sigma^{(\lambda_1,\ldots,\lambda_m)}$  we can obtain a given decomposition $\sigma=\sigma_1+\ldots+\sigma_k$ into a sum of essential signatures of highest weights in $\{\lambda_1,\ldots,\lambda_m\}$ from any other such decomposition $\sigma=\tau_1+\ldots+\tau_l$ by applying admissible operations;
\item[$(**)$] for any two weights $\lambda_i$, $\lambda_j$ any essential signature of highest weight $\lambda_i+\lambda_j$ is representable as a sum of essential signatures of highest weights in $\{\lambda_1,\ldots,\lambda_m\}$.
\end{enumerate}
 Then $\Sigma(S)=\Sigma^{(\lambda_1,\ldots,\lambda_m)}$.
 \label{theorem1}
\end{theor}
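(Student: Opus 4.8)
The plan is to reduce everything to the criterion of Proposition~\ref{proposition4}, namely the ideal identity $\intt(I)=J$, and then to recognize the two hypotheses as exactly the two ingredients needed for the two inclusions. By Proposition~\ref{proposition4}, proving $\Sigma(S)=\Sigma^{(\lambda_1,\ldots,\lambda_m)}$ amounts to proving $\intt(I)=J$, and by the preceding Lemma we already have $\intt(I)\subseteq J$. So the whole task is the reverse inclusion $J\subseteq\intt(I)$, which I intend to extract from $(*)$ (which controls the generators of $J$) together with $(**)$ (which then feeds the argument of Proposition~\ref{proposition4} on those generators).

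First I would translate $(*)$ into a statement about generators of $J$. The ideal $J$ is spanned by binomials $M-M'$ with $M=x_{i_1}\cdots x_{i_k}$, $M'=x_{j_1}\cdots x_{j_l}$ and $\sign(M)=\sign(M')=\sigma$; such a pair is precisely two decompositions of $\sigma$ into essential signatures of highest weights in $\{\lambda_1,\ldots,\lambda_m\}$. The three admissible operations are exactly the degree-$\le 2$ binomials: operation (1) is $x_ix_j-x'_ix'_j$, while operations (2) and (3) are of the form $x_ix_j-x'_s$. The mechanism is the standard toric telescoping: if $(*)$ holds, a chain of admissible moves $M=M_0\to M_1\to\cdots\to M_r=M'$ gives $M-M'=\sum_s(M_s-M_{s+1})$, and after cancelling the common factor each $M_s-M_{s+1}$ is one of these degree-$\le 2$ binomials; conversely, expressing an arbitrary binomial of $J$ through degree-$\le 2$ generators yields such a chain. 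Hence $(*)$ is equivalent to $J$ being generated by its degree-$\le 2$ binomials. (Since distinct coordinates correspond to distinct essential signatures, $J$ has no nonzero degree-$1$ part, so this really means generation in degree $2$.) I expect the careful justification of this equivalence — connectivity of the ``decomposition graph'', which the paper calls obvious — to be the main obstacle, since that is where the combinatorial content of $(*)$ is actually spent.

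Granting this, it remains to show that every signature-homogeneous $f\in J$ of degree $2$ lies in $\intt(I)$. Such an $f$ has $\sign(f)=\sigma$ of highest weight $\lambda_i+\lambda_j$ for some $i,j$, because each of its monomials is a product of two generators whose weights sum to $\lambda_i+\lambda_j$ (in the cases coming from operations (2) and (3) one simply has the coincidence $\lambda_s=\lambda_i+\lambda_j$). This is precisely the highest weight to which hypothesis $(**)$ applies, and that coincidence is the link between the two hypotheses.

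Finally I would run the forward argument from the proof of Proposition~\ref{proposition4} on this $f$. Being signature-homogeneous of signature $\sigma$, $f$ is a combination of monomials of signature $\sigma$ whose coefficients sum to $0$ (it is a sum of binomials), so the $z^{\sigma}$-terms cancel in $\phi(f)$ and $\intt(\phi(f))>z^{\sigma}$. Starting from $f_0=f$, I subtract from $f_s$ a scalar multiple of a monomial $M$ whose $\phi$-least term equals $\intt(\phi(f_s))$, producing $f_{s+1}$ with $\intt(\phi(f_{s+1}))>\intt(\phi(f_s))$. Because $\phi$ is graded for the coarser $\mathfrak{t}^{*}_{\mathbb{Z}}$-grading, $\intt(\phi(f_s))$ always has highest weight $\lambda_i+\lambda_j$, so its signature $\sign(\intt(\phi(f_s)))$ is an essential signature of highest weight $\lambda_i+\lambda_j$; the existence of the required monomial $M$ is exactly the assertion of $(**)$ that this signature lies in $\Sigma^{(\lambda_1,\ldots,\lambda_m)}$. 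Each such $M$ has signature strictly larger than $\sigma$, so $\intt(f_s)=f$ throughout, and the process terminates because there are only finitely many essential signatures of the fixed highest weight $\lambda_i+\lambda_j$. It ends with $f_m$ satisfying $\phi(f_m)=0$ and $\intt(f_m)=f$, whence $f\in\intt(I)$. This yields $J\subseteq\intt(I)$, hence $\intt(I)=J$, and closes the argument.
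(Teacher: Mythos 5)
Your proposal is correct and follows the same route as the paper: the paper derives Theorem~\ref{theorem1} from the discussion preceding it, namely that $(*)$ is equivalent to $J$ being generated in degree $\le 2$ (via the telescoping of admissible operations), and that for a degree-$2$ signature-homogeneous $f\in J$ the subtraction process from the proof of Proposition~\ref{proposition4}, fed by $(**)$ applied to essential signatures of highest weight $\lambda_i+\lambda_j$, produces an element of $I$ with least term $f$, giving $J\subseteq\intt(I)$ and hence $\intt(I)=J$. You have merely spelled out the steps the paper labels ``obvious'' or delegates to the proof of Proposition~\ref{proposition4}, so no further comparison is needed.
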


 Denote by $\Sigma^f$ the semigroup generated by essential signatures of fundamental highest weights. For $\{\lambda_1,\ldots,\lambda_m\}=\{\omega_1,\ldots,\omega_n\}$ the above theorem yields the following corollary:
 \begin{proposition}
 Suppose that the following two properties hold:
 \label{proposition5}
 \begin{enumerate}
\item for any signature $\sigma\in\Sigma^f$ we can obtain a given decomposition $\sigma=\sigma_1+\ldots+\sigma_k$ into a sum of essential signatures of fundamental highest weights from any other such decomposition $\sigma=\tau_1+\ldots+\tau_k$ by applying admissible operations (admissible operations of the first type only are applicable here);
\item for any two fundamental weights $\omega_i$, $\omega_j$ any essential signature of highest weight $\omega_i+\omega_j$ is representable as a sum of essential signatures of highest weights $\omega_i$ and $\omega_j$.
\end{enumerate}
Then $\Sigma=\Sigma^f$.
 \end{proposition}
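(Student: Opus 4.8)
The plan is to obtain this proposition as a direct specialization of Theorem \ref{theorem1}, taking $\{\lambda_1,\ldots,\lambda_m\}$ to be the full set of fundamental weights $\{\omega_1,\ldots,\omega_n\}$, and then checking that under this choice the conclusion and the two hypotheses translate into exactly the statements of the proposition. First I would identify the objects appearing in Theorem \ref{theorem1} in this case. By definition $\Sigma^{(\omega_1,\ldots,\omega_n)}=\Sigma^f$. Since every dominant weight is a nonnegative integer combination of fundamental weights, the semigroup $S$ generated by $\omega_1,\ldots,\omega_n$ is the whole semigroup of dominant weights, whence $\Sigma(S)=\Sigma$. Thus the conclusion $\Sigma(S)=\Sigma^{(\omega_1,\ldots,\omega_n)}$ of Theorem \ref{theorem1} reads precisely as $\Sigma=\Sigma^f$, which is the desired assertion.

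Next I would match the two hypotheses of the proposition with properties $(*)$ and $(**)$. Property $(**)$ is immediate: for fundamental weights $\omega_i,\omega_j$ the weight $\omega_i+\omega_j$ has a unique expression as a sum of fundamental weights, so any representation of an essential signature of highest weight $\omega_i+\omega_j$ as a sum of essential signatures of fundamental highest weights must consist of exactly one summand of highest weight $\omega_i$ and one of highest weight $\omega_j$. Hence hypothesis 2 is literally property $(**)$ specialized to the fundamental case.

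The only point requiring a short justification, and the step I would regard as carrying the genuine (if elementary) content, is the parenthetical claim in hypothesis 1 that only admissible operations of the first type occur and that the two decompositions have equal length. Here I would argue as follows: if $\sigma$ has highest weight $\lambda=\sum k_i\omega_i$, then in any decomposition of $\sigma$ into essential signatures of fundamental highest weights the highest weights of the summands add up to $\lambda$; since the fundamental weights are linearly independent and occur with nonnegative coefficients, the number of summands is forced to be $\sum k_i$, independently of the decomposition, so $k=l$. Operations of types $2$ and $3$ change the number of summands and would require a single fundamental weight to equal a sum of two fundamental weights, or the reverse, which is impossible; hence such operations never apply and only type-1 operations remain. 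Consequently hypothesis 1 is exactly property $(*)$ in this case. With both hypotheses of Theorem \ref{theorem1} thereby verified, its conclusion gives $\Sigma=\Sigma^f$, completing the argument; the whole proof is a formal specialization whose sole nontrivial ingredient is the observation that a sum of fundamental weights never collapses to a single fundamental weight.
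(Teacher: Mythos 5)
Your proposal is correct and follows exactly the route the paper intends: Proposition~\ref{proposition5} is stated there as an immediate specialization of Theorem~\ref{theorem1} to $\{\lambda_1,\ldots,\lambda_m\}=\{\omega_1,\ldots,\omega_n\}$, with no further proof given. Your additional verifications --- that $\Sigma(S)=\Sigma$ because the fundamental weights generate all dominant weights, and that type-2 and type-3 admissible operations can never apply since a fundamental weight is never a sum of two fundamental weights --- correctly fill in the details the paper leaves implicit.
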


\section {Orthogonal case}
\label{orthcase}
In this section we prove (see Theorem \ref{main}) that for some fixed numeration of positive roots and monomial order on signatures the semigroup $\Sigma$ is saturated and generated by essential signatures of highest weights $\omega_i$ ($i=1,\ldots,n$) and $2\omega_n$ for $\mathfrak{g}$ of type $B_n$, and by essential signatures of highest weights $\omega_i$ ($i=1,\ldots,n$), $2\omega_{n-1}$, $2\omega_n$, $\omega_{n-1}+\omega_n$ for $\mathfrak{g}$ of type $D_n$, i.e.:
\begin{eqnarray}
\Sigma_{B_n}&=&\Sigma_{B_n}^{(\omega_1,\ldots,\omega_n,2\omega_n)},\label{B_n}\\ \Sigma_{D_n}&=&\Sigma_{D_n}^{(\omega_1,\ldots,\omega_n,\omega_{n-1}+\omega_n, 2\omega_{n-1},2\omega_n)}\label{D_n}.
\end{eqnarray}
Here the numeration of fundamental weights is according to \cite[Table 1]{[ÂÎ]}.
Moreover, we find the inequalities defining the cone $\Sigma_{\mathbb{Q}}$ for $D_n$ and $B_n$ (see Theorems \ref{ineqdn} and \ref{ineqbn}, respectively).
We shorten the notation as follows:
$$
\Sigma_{B_n}^{(\cdot)}:=\Sigma_{B_n}^{(\omega_1,\ldots,\omega_n,2\omega_n)},\quad \Sigma_{D_n}^{(\cdot)}:=\Sigma_{D_n}^{(\omega_1,\ldots,\omega_n,2\omega_{n-1},2\omega_n,\omega_{n-1}+\omega_n)},
$$
$$ \Sigma'_{D_n}:=\Sigma_{D_n}^{(\omega_1,\ldots,\omega_{n-2},\omega_{n-1}+\omega_n,\omega_{n-1},2\omega_{n-1})},\quad\Sigma''_{D_n}:=\Sigma_{D_n}^{(\omega_1,\ldots,\omega_{n-2},\omega_{n-1}+\omega_n,\omega_{n},2\omega_{n})}.$$

 First we prove that $\Sigma$ is saturated and (\ref{D_n}) holds for $D_2$. Then our strategy will be to show inductively (see the induction hypothesis below) that if $\Sigma$ is saturated and (\ref{D_n}) is true for $D_n$ then $\Sigma$ is saturated and (\ref{D_n}) (resp. (\ref{B_n})) is true for $D_{n+1}$  (resp. $B_n$).

We introduce some notation and recall basic facts about representations of orthogonal Lie algebras.

 Let $\widehat\omega_p=\omega_p$ if $p\neq n-1$ and $\widehat\omega_{n-1}=\omega_{n-1}+\omega_{n}$ for $D_n$, and let
$\widehat\omega_p=\omega_p$ if $p\neq n$ and $\widehat\omega_{n}=2\omega_{n}$ for $B_n$.

Recall that $V(\omega_1)$ is the standard representation of $\mathfrak{so}_{2n+1}$ (resp. $\mathfrak{so}_{2n}$) in $\mathbb{C}^{2n+1}$ (resp. $\mathbb{C}^{2n}$).

Let $\pm\varepsilon_i$ ($i=1,\ldots, n$) be the nonzero weights of the representation $V(\omega_1)$ of $D_n$ or $B_n$. Then the positive roots of $D_n$ are
$$
\varepsilon_i\pm\varepsilon_j, \quad i<j,\quad i,j\in\{1,\ldots,n\},
$$
and the positive roots of $B_n$ are
$$
\varepsilon_i\pm\varepsilon_j, \quad i<j, \quad i,j\in\{1,\ldots,n\},
$$
$$
\varepsilon_i,\quad i\in\{1,\ldots,n\}.
$$
The fundamental weights and weights $\widehat\omega_i$ can be expressed via $\varepsilon_i$ as follows:
$$
\widehat\omega_i=\varepsilon_1+\ldots+\varepsilon_i,\quad i=1,\ldots,n \quad\textrm{for}\quad B_n,\quad i=1,\ldots,n-1\quad\textrm{for}\quad D_n;
$$

$$
\omega_n=\frac{1}{2}(\varepsilon_1+\ldots+\varepsilon_n)\quad\textrm{for both $B_n$ and $D_n$};
$$
$$
\omega_{n-1}=\frac{1}{2}(\varepsilon_1+\ldots+\varepsilon_{n-1}-\varepsilon_n)\quad\textrm{for}\quad D_n.
$$

Denote by $e_{\pm i}$ eigenvectors in $V(\omega_1)$ of eigenvalues $\pm\varepsilon_i$, and denote by $e_0$ an eigenvector of eigenvalue 0 (for $B_n$).

Denote by $\Sigma_X(\lambda)$ the set of essential signatures of highest weight $\lambda$ for the simple Lie algebra of type $X$. For any signature $\sigma$ denote by $\overline{\sigma}$ the tuple of its exponents, set $\overline{\Sigma}_X(\lambda)=\{\overline\sigma\mid\sigma\in\Sigma_X(\lambda)\}$. We denote by $\overline\alpha_i$ the tuple of exponents with coordinate 1 corresponding to a root $\alpha_i$ and with all other coordinates equal to 0. Let $V_{X}(\lambda)$ be the irreducible representation of the Lie algebra of type $X$ with the highest weight $\lambda$.

 One has:
$$
V_{B_n}(\widehat\omega_p)={\bigwedge}^{p}\mathbb{C}^{2n+1},\quad v_{\widehat\omega_p}=e_1\wedge\ldots\wedge e_p,\quad p=1,\ldots,n,
$$
$$
V_{D_n}(\widehat\omega_p)={\bigwedge}^{p}\mathbb{C}^{2n},\quad v_{\widehat\omega_p}=e_1\wedge\ldots\wedge e_p, \quad p<n,
$$
\begin{multline*}
$$V_{D_n}(2\omega_{n-1})\oplus V_{D_n}(2\omega_{n})={\bigwedge}^n\mathbb{C}^{2n},\quad v_{2\omega_{n-1}} = e_1\wedge\ldots\wedge e_{n-1}\wedge e_{-n},\quad\\ v_{2\omega_{n}} = e_1\wedge\ldots\wedge e_{n-1}\wedge e_{n}.$$
\end{multline*}

The representation $V_{B_n}(\omega_n)$ decomposes into a sum of $2^n$ one-dimensional weight subspaces of weights $\frac{1}{2}(\pm\varepsilon_1\pm\ldots\pm\varepsilon_n)$. The representation $V_{D_n}(\omega_{n-1})$ ($V_{D_n}(\omega_{n})$) decomposes into a sum of $2^{n-1}$ one-dimensional weight subspaces of weights $\frac{1}{2}(\pm\varepsilon_1\pm\ldots\pm\varepsilon_n)$ with odd (resp. even) number of minuses.

Now we formulate the \emph{induction hypothesis} for $D_n$:
\begin{itemize}
\item the properties ($*$), ($**$) (see Theorem \ref{theorem1}) hold for $\Sigma'_{D_n}$ and $\Sigma''_{D_n}$,
\item the semigroup $\Sigma_{D_n}$ is saturated,
\item the following property holds: $$\Sigma'_{D_n}\cup\Sigma''_{D_n}=\Sigma_{D_n}^{(\cdot)}.\eqno{(\dagger)}$$
\end{itemize}
The decomposition ($\dagger$) is a technical property, which we use in the proofs.
\subsection{$D_2$}
The base case in the induction procedure is $n=2$. So we start with $D_2=A_1+A_1$.

\label{b3}
 Let $\beta_1, \beta_2$ be the  simple roots for $D_2$. One has:

$$\beta_1=\varepsilon_1-\varepsilon_2,\quad
\beta_2=\varepsilon_1+\varepsilon_2.
$$
$$
\omega_1=\frac{1}{2}(\varepsilon_1-\varepsilon_2),
\quad
\omega_2=\frac{1}{2}(\varepsilon_1+\varepsilon_2).$$

Let us enumerate the positive roots of $D_2$ as follows:
\begin{center}
\begin{tabular}{ll}
 $\alpha_1=\varepsilon_1-\varepsilon_2$, &
 $\alpha_2=\varepsilon_1+\varepsilon_2.$

\end{tabular}
\end{center}

A monomial order on signatures $\sigma=(\lambda;p_1,p_2)$ of fixed highest weight $\lambda$ is given by the lexicographic order on tuples $(p_2,p_1)$.

 Here are all essential signatures of highest weight $\omega_1$ (the highest weight component is omitted):

\begin{center}
\begin{tabular}{ll}
1. $(0,0)$ &
2. $(1,0)$.
\end{tabular}
\end{center}
Here are all essential signatures of highest weight $\omega_2$ (the highest weight component is omitted):
\begin{center}
\begin{tabular}{ll}
1. $(0,0)$ &
2. $(0,1)$.
\end{tabular}
\end{center}

\begin{lemma}
The induction hypothesis holds for $D_2$.
\end{lemma}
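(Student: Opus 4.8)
The plan is to exploit $D_2\simeq A_1\times A_1$: the root $\alpha_1=\beta_1$ belongs to the first $A_1$-factor and $\alpha_2=\beta_2$ to the second, so $V_{D_2}(k_1\omega_1+k_2\omega_2)$ is the outer tensor product $M_{k_1}\otimes M_{k_2}$ of the $(k_1+1)$- and $(k_2+1)$-dimensional irreducible $\mathfrak{sl}_2$-modules, and
$$
v(\sigma)=\Bigl(\tfrac{1}{p_1!}e_{-\alpha_1}^{p_1}v_{k_1}\Bigr)\otimes\Bigl(\tfrac{1}{p_2!}e_{-\alpha_2}^{p_2}v_{k_2}\Bigr),
$$
where $v_{k_i}$ is the highest-weight vector of $M_{k_i}$. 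Such a vector is nonzero precisely for $0\le p_1\le k_1$, $0\le p_2\le k_2$, and over that range the vectors form a basis (a tensor product of two bases). Consequently, for any monomial order, each nonzero $v(\sigma)$ is essential and each vanishing one is not, so that
$$
\overline\Sigma_{D_2}(k_1\omega_1+k_2\omega_2)=\{(p_1,p_2)\mid 0\le p_1\le k_1,\ 0\le p_2\le k_2\}.
$$
The generating signatures of $\omega_1,\omega_2,2\omega_1,2\omega_2,\omega_1+\omega_2$ are read off from this box, and all four assertions of the induction hypothesis reduce to elementary checks.

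First I treat saturation and $(\dagger)$. Writing a signature as $(k_1,k_2,p_1,p_2)$, the box description identifies $\Sigma_{D_2}$ with the set of all integer points of the rational cone $p_1\ge0$, $p_2\ge0$, $k_1-p_1\ge0$, $k_2-p_2\ge0$; this cone is spanned by the four genuine signatures $(1,0,0,0),(1,0,1,0),(0,1,0,0),(0,1,0,1)$, and every lattice point of it is a non-negative integer combination of these, hence lies in $\Sigma_{D_2}$. Thus $\Sigma_{D_2}$ is saturated. A short computation of the weight monoids shows that $\Sigma'_{D_2}$ is cut out inside $\Sigma_{D_2}$ by the extra inequality $k_1\ge k_2$ and $\Sigma''_{D_2}$ by $k_2\ge k_1$ (the number of $(\omega_1+\omega_2)$-generators equals $k_2$, leaving the mass $k_1-k_2\ge0$ on the $\omega_1$-side for $\Sigma'_{D_2}$). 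Their union is therefore all of $\Sigma_{D_2}$; since $\Sigma'_{D_2},\Sigma''_{D_2}\subseteq\Sigma^{(\cdot)}_{D_2}\subseteq\Sigma_{D_2}$, we obtain $\Sigma^{(\cdot)}_{D_2}=\Sigma'_{D_2}\cup\Sigma''_{D_2}$, which is $(\dagger)$.

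By the symmetry exchanging the two $A_1$-factors, which swaps $\omega_1\leftrightarrow\omega_2$ and $p_1\leftrightarrow p_2$ and preserves the (order-independent) set of essential signatures, it suffices to verify $(*)$ and $(**)$ for $\Sigma'_{D_2}$. Write $A_{q_1q_2}=(\omega_1+\omega_2;q_1,q_2)$, $B_r=(\omega_1;r,0)$, $C_s=(2\omega_1;s,0)$ for the generating signatures. For $(**)$ I run through the six sums of pairs of generating highest weights, namely $2\omega_1,3\omega_1,4\omega_1,2\omega_1+\omega_2,3\omega_1+\omega_2,2\omega_1+2\omega_2$; in each case the relevant box of essential signatures is small, and since all four combinations $(q_1,q_2)\in\{0,1\}^2$ occur among the $A$'s and both values $r\in\{0,1\}$ among the $B$'s, every point of the box is visibly a sum of generators of the correct highest weights. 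For $(*)$ the key observation is the relation $C_s=B_j+B_{s-j}$; applying admissible operations of types 2 and 3 I split every $C$ into two $B$'s, reducing an arbitrary decomposition to one using only $A$'s and $B$'s, in which the number of $A$'s is forced to be $k_2$ and the number of $B$'s to be $k_1-k_2$. Any two such reduced decompositions are then connected by type-1 operations built from the quadratic relations $A_{10}+A_{01}=A_{11}+A_{00}$, $A_{10}+B_0=A_{00}+B_1$ and $A_{11}+B_0=A_{01}+B_1$: the first concentrates the second-exponent mass on a canonical set of $A$'s, and the other two move first-exponent mass onto the $B$'s, producing a normal form depending only on $(k_1,k_2,p_1,p_2)$.

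I expect the only genuine work to be in $(*)$: one must confirm that the three quadratic relations above, together with the splittings $C_s=B_j+B_{s-j}$, generate all syzygies among the nine generating signatures, equivalently that the reduction to normal form is always reachable by admissible moves and terminates. Saturation, $(\dagger)$ and $(**)$ drop out immediately from the box description, so they should require no more than the bookkeeping indicated above.
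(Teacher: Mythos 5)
Your proposal is correct and follows essentially the same route as the paper: both exploit $D_2=A_1+A_1$ to show that the essential signatures of highest weight $k_1\omega_1+k_2\omega_2$ are exactly the box $0\le p_1\le k_1$, $0\le p_2\le k_2$ (you via linear independence of the pure tensors, the paper via a count against $\dim V=(k_1+1)(k_2+1)$), after which saturation, $(\dagger)$, $(*)$ and $(**)$ are read off. Your write-up is in fact more explicit than the paper's about verifying $(*)$ for $\Sigma'_{D_2}$ and $\Sigma''_{D_2}$ via concrete admissible operations, which the paper leaves as ``the arguments above show''.
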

\begin{proof}
Fix some dominant weight $\lambda=k\omega_1+l\omega_2$.
Since any irreducible representation of $D_2=A_1+A_1$ is a tensor product of irreducible representations of $A_1$, one has:
$$
\dim V_{D_2}(k\omega_1+l\omega_2)=\dim V_{A_1}(k\omega)\cdot\dim V_{A_1}(l\omega)=(k+1)\cdot(l+1),
$$
where we denote by $\omega$ the fundamental weight of $A_1$.

 Obviously, any signature $(k\omega_1+l\omega_2;p_1,p_2)$, where $p_1=0,\ldots,k,$ $p_2=0,\ldots,l$, is representable as a sum of  essential signatures of highest weights $\omega_1$ and $\omega_2$. The number of such signatures is exactly $(k+1)\cdot(l+1)$. This implies that $\Sigma_{D_2}=\Sigma^{f}$. Moreover, any essential signature has a unique representation as a sum of essential signatures of fundamental highest weights. The semigroup $\Sigma_{D_2}$ is given by the inequalities:
 \begin{enumerate}
 \item $k\geq0$,
 \item $l\geq0$,
 \item $p_1\leq k$,
 \item $p_2\leq l$.
 \end{enumerate}

 The arguments above show that properties ($*$), ($**$), ($\dagger$) hold and $\Sigma_{D_2}$ is saturated.
\end{proof}

 So we can start an inductive procedure from $D_2$.
\subsection {From $D_n$ to $B_n$}

In this subsection we assume inductively that we have some numeration of positive roots and some monomial order for $D_n$ such that inductive hypothesis for $D_n$ holds.

  Here we prove that $\Sigma_{B_n}$ is saturated and the properties ($*$), ($**$) hold for $B_n$ with
$$\{\lambda_1,\ldots,\lambda_{n+1}\}=\{\omega_1,\ldots,\omega_n,2\omega_n\},$$
where we denote the fundamental weights for $B_n$ by the same letters as for $D_n$, by abuse of notation.

We make an induction step by proving the inductive hypothesis for $D_{n+1}$ in the next subsection.

We have the standard embedding of $SO_{2n}$ in $SO_{2n+1}$ such that the following $D_n$-module decomposition holds:
$$
V_{B_n}(\omega_1)=V_{D_n}(\omega_1)\oplus\langle e_0\rangle.
$$
Since $D_n$ and $B_n$ share the same Cartan subalgebra, we can consider any root of $D_n$ as a root of $B_n$.

First of all we need to extend the monomial order and the numeration of positive roots from $D_n$ to $B_n$. We enumerate short positive roots of $B_n$ after long positive roots, which are positive roots of $D_n$:
$$
\textrm{positive roots of $D_n$},\varepsilon_1,\ldots,\varepsilon_n.
$$

 We extend the monomial order (on signatures with the same highest weight) as follows:
$$
(\overline{\sigma},k_1,\ldots,k_n)< (\overline\sigma',k_1',\ldots,k_n'),
$$
if either $(k_1,\ldots,k_n)<(k_1',\ldots,k_n')$ in degree  lexicographic order or $(k_1,\ldots,k_n)=(k_1',\ldots,k_n')$ and $\overline\sigma<\overline\sigma'$ with respect to the monomial order for $D_n$.

Now we describe the set of essential signatures of highest weights $\lambda_i$ for $B_n$.  For any signature $\sigma$ of $D_n$ we consider $\overline\sigma$ as the tuple $(\overline\sigma,0,\ldots,0)$ for $B_n$.

\begin{lemma}
\label{lemma}
$$
\overline\Sigma_{B_n}(\omega_p)=\overline\Sigma_{D_n}(\widehat\omega_p)\sqcup(\overline\Sigma_{D_n}(\omega_{p-1})+\overline\varepsilon_p),\quad p=1,\ldots,n,
$$
$$
\overline\Sigma_{B_n}(2\omega_{n})=(\overline\Sigma_{D_n}(\widehat\omega_{n-1})+\overline\varepsilon_n)\sqcup(\overline\Sigma_{D_n}(2\omega_{n-1})+2\overline\varepsilon_{n})\sqcup\overline\Sigma_{D_n}(2\omega_{n}),
$$
\end{lemma}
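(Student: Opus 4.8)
The plan is to invoke the Proposition characterizing an essential signature as the least monomial of $f_\omega|_{U^-}$ for some $\omega$, and to compute these restricted matrix coefficients explicitly in the exterior and spinor models. I would begin from the branching $V_{B_n}(\omega_1)=V_{D_n}(\omega_1)\oplus\langle e_0\rangle$ and the factorization forced by the numbering (short roots last),
$$u^-=u^-_{D_n}\cdot s,\qquad s=\exp(z_{\varepsilon_1}e_{-\varepsilon_1})\cdots\exp(z_{\varepsilon_n}e_{-\varepsilon_n}),$$
where $u^-_{D_n}$ involves only the long root subgroups. Since $\langle e_0\rangle$ is the trivial $D_n$-summand, $u^-_{D_n}$ fixes $e_0$ and preserves $\mathbb{C}^{2n}$, while each $e_{-\varepsilon_i}$ acts through the triple $e_i\mapsto e_0\mapsto e_{-i}\mapsto 0$. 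The first step is to record the normal form of the columns $u^-e_j$: the $e_0$-entry is a nonzero multiple of $z_{\varepsilon_j}$, the $e_{-j}$-entry first appears at order $z_{\varepsilon_j}^2$, the $\mathbb{C}^{2n}$-block equals $M^{D_n}(z)+O(z_\varepsilon^2)$, and only $z_{\varepsilon_1},\dots,z_{\varepsilon_p}$ can occur in the first $p$ columns.

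For the exterior powers $V_{B_n}(\widehat\omega_p)=\bigwedge^p\mathbb{C}^{2n+1}$ I would use $u^-(e_1\wedge\cdots\wedge e_p)=(u^-e_1)\wedge\cdots\wedge(u^-e_p)$, so that $f_{e_S^\ast}|_{U^-}$ is the $p\times p$ minor of $[\,u^-e_1\mid\cdots\mid u^-e_p\,]$ on the rows $S$. I then read off the least term with respect to the extended order, which ranks the total short-degree $\sum k_i$ first and, within a fixed short-degree, prefers the lexicographically least exponent vector (the largest index). If $0\notin S$ and the $z_\varepsilon$-degree-$0$ part — the $D_n$-minor of the highest weight configuration — is nonzero, the least term has all $k_i=0$ and recovers $\overline\Sigma_{D_n}(\widehat\omega_p)$ (resp.\ $\overline\Sigma_{D_n}(2\omega_n)$ for $p=n$). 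If $0\in S$, expanding along the $e_0$-row contributes a single short factor, and minimality forces it to be $z_{\varepsilon_p}$ (the largest available index); the residual $(p-1)$-minor runs over columns $1,\dots,p-1$, i.e.\ the $D_n$-matrix of the highest weight vector $e_1\wedge\cdots\wedge e_{p-1}$, producing $\overline\Sigma_{D_n}(\omega_{p-1})+\overline\varepsilon_p$ (resp.\ $\overline\Sigma_{D_n}(\widehat\omega_{n-1})+\overline\varepsilon_n$ for $2\omega_n$).

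The delicate layer occurs only for $2\omega_n$, where $\bigwedge^n\mathbb{C}^{2n}=V_{D_n}(2\omega_{n-1})\oplus V_{D_n}(2\omega_n)$ is reducible. For the subsets $S$ whose weight occurs only in the $V_{D_n}(2\omega_{n-1})$-summand, the $z_\varepsilon$-degree-$0$ part of the minor vanishes identically, and I would show that the leading surviving contribution comes from trading $e_n$ for its $e_{-n}$-entry, i.e.\ carries the factor $z_{\varepsilon_n}^2$, since $e_1\wedge\cdots\wedge e_{n-1}\wedge e_{-n}=v_{2\omega_{n-1}}$; factoring it out reduces the minor to the $D_n$-coefficient of $v_{2\omega_{n-1}}$ and yields $\overline\Sigma_{D_n}(2\omega_{n-1})+2\overline\varepsilon_n$. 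Verifying that $z_{\varepsilon_n}^2$ is genuinely least here — ruling out cancellations and lower short-degree survivors, and handling the shared zero-weight vectors of the two summands — is the main obstacle, and is exactly where the normal form of $u^-e_j$ and the precise shape of the order must be used most carefully. The spinor case $p=n$ of the first formula is not an exterior power and I would treat it in the weight model $\tfrac12(\pm\varepsilon_1\pm\cdots\pm\varepsilon_n)$, where $e_{-\varepsilon_i}$ flips the $i$-th sign and $e_{-\varepsilon_i}^2=0$; the branching into half-spinors $V_{D_n}(\omega_n)$ (even number of flips) and $V_{D_n}(\omega_{n-1})$ (odd number of flips) gives, by the same reasoning, the layers $\overline\Sigma_{D_n}(\omega_n)$ and $\overline\Sigma_{D_n}(\omega_{n-1})+\overline\varepsilon_n$.

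To finish I would check that the layers within each formula are pairwise disjoint — they are separated by the power of $z_{\varepsilon_n}$ (equivalently by a congruence of the weight) — so the right-hand sides are genuine disjoint unions, and that every listed signature is realized as a least term, hence is essential. Using that $\#\overline\Sigma_{D_n}(\mu)=\dim V_{D_n}(\mu)$ (the basis property for $D_n$) together with the Pascal identities $\binom{2n}{p}+\binom{2n}{p-1}=\binom{2n+1}{p}$, $\dim V_{D_n}(2\omega_{n-1})+\binom{2n}{n-1}+\dim V_{D_n}(2\omega_n)=\binom{2n+1}{n}$, and $2^{n-1}+2^{n-1}=2^n$, the cardinality of each right-hand side equals $\dim V_{B_n}(\cdot)=\#\{\text{essential signatures}\}$. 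Since the right-hand side is contained in the set of essential signatures and has the same (finite) cardinality, the two coincide, which proves the Lemma.
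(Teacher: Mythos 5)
Your proposal is correct in substance and rests on the same structural input as the paper's proof: the $D_n$-module decompositions $\bigwedge^p\mathbb{C}^{2n+1}=\bigwedge^p\mathbb{C}^{2n}\oplus e_0\wedge\bigwedge^{p-1}\mathbb{C}^{2n}$, the spinor branching $V_{B_n}(\omega_n)=V_{D_n}(\omega_n)\oplus V_{D_n}(\omega_{n-1})$, the splitting $\bigwedge^n\mathbb{C}^{2n}=V_{D_n}(2\omega_{n-1})\oplus V_{D_n}(2\omega_n)$, and the fact that the extended order compares the short exponents before anything else. Where you differ is in mechanism: you run everything through the dual characterization of essentiality (least terms of the minors $f_{e_S^{*}}|_{U^-}$) and close with a cardinality count, whereas the paper argues directly on the module side --- a signature with zero short exponents is essential because every smaller signature also has zero short exponents, so $D_n$-essentiality applies verbatim, and the signatures in a layer $\overline\Sigma_{D_n}(\mu)+k\overline\varepsilon_p$ are essential because they are minimal among those whose vectors span $V_{B_n}(\lambda)$ modulo the preceding layers. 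Your counting finish is legitimate (the paper uses exactly this device in the analogous lemma for $D_{n+1}$) and arguably makes the ``no further essential signatures'' direction cleaner. The one place you stop short is the step you yourself flag: verifying that the layer $\overline\Sigma_{D_n}(2\omega_{n-1})+2\overline\varepsilon_n$ genuinely yields least terms, free of cancellation. The paper disposes of this without any minor computation by observing that $e_{-\varepsilon_n}v_{2\omega_n}$ and $e_{-\varepsilon_n}^2v_{2\omega_n}$ are, up to nonzero scalars, $e_1\wedge\cdots\wedge e_{n-1}\wedge e_0$ and $e_1\wedge\cdots\wedge e_{n-1}\wedge e_{-n}=v_{2\omega_{n-1}}$, i.e.\ the $D_n$-highest weight vectors of the summands $e_0\wedge V_{D_n}(\widehat\omega_{n-1})$ and $V_{D_n}(2\omega_{n-1})$; hence the vectors $v(\sigma)$ with $\overline\sigma$ in the given layer span the corresponding summand, every signature smaller in the order has vector lying in the span of the previous layers, and minimality is immediate. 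Substituting this observation for the deferred minor analysis completes your argument.
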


where $\overline\Sigma_{D_n}(\omega_0):=\{(0,\ldots,0)\}$.

\begin{proof}
For any $p<n$ one has the $D_n$-module decomposition:
$$
V_{B_n}(\omega_p)={\bigwedge}^p\mathbb{C}^{2n+1}={\bigwedge}^p\mathbb{C}^{2n}\oplus e_0\wedge{\bigwedge}^{p-1}\mathbb{C}^{2n}=
$$
$$=V_{D_n}(\widehat\omega_p)\oplus e_0\wedge V_{D_n}(\omega_{p-1}),
$$
where the first summand is spanned by the vectors $v(\sigma), \overline\sigma\in\overline\Sigma_{D_n}(\widehat\omega_p)$, and the second summand is spanned by $ v(\sigma),\overline\sigma\in\overline\Sigma_{D_n}(\omega_{p-1})+\overline\varepsilon_p$.
 The signatures $\sigma$ with  $\overline\sigma\in\overline\Sigma_{D_n}(\widehat\omega_p)$ are essential for $D_n$ and have zero exponents corresponding to roots $\varepsilon_i$, hence they are essential for the extended monomial order. The signatures $\sigma$ with  $\overline\sigma\in\overline\Sigma_{D_n}(\omega_{p-1})+\overline\varepsilon_p$ are minimal among those for which $v(\sigma)$ span $V_{B_n}(\omega_p)$ modulo $V_{D_n}(\widehat\omega_p)$, hence they are essential, too. This proves the first equality for $p<n$.

Now we want to prove the equality:
$$
\overline\Sigma_{B_n}(\omega_n)=\overline\Sigma_{D_n}(\omega_n)\sqcup(\overline\Sigma_{D_n}(\omega_{n-1})+\overline\varepsilon_n).
$$
We have a $D_n$-module decomposition:
$$
V_{B_n}(\omega_n)= V_{D_n}(\omega_n)\oplus V_{D_n}(\omega_{n-1}),
$$
where the first summand is spanned by $v(\sigma), \overline\sigma\in \overline\Sigma_{D_n}(\omega_n)$, and the second summand is spanned by $v(\sigma), \overline\sigma\in \overline\Sigma_{D_n}(\omega_{n-1})+\overline\varepsilon_n$. The signatures $\sigma$ with $\overline\sigma\in\overline\Sigma_{D_n}(\omega_n)$ are essential for $D_n$ and have zero exponents corresponding to roots $\varepsilon_i$, hence they are essential for the extended monomial order. The signatures $\sigma$ with   $\overline\sigma\in\overline\Sigma_{D_n}(\omega_{n-1})+\overline\varepsilon_n$ are minimal among those for which $v(\sigma)$ span $V_{B_n}(\omega_n)$ modulo $V_{D_n}(\omega_n)$, hence they are essential, too.

The last equality we have to prove is:
$$
\overline\Sigma_{B_n}(2\omega_{n})=(\overline\Sigma_{D_n}(\omega_{n-1}+\omega_n)+\overline\varepsilon_n)\sqcup(\overline\Sigma_{D_n}(2\omega_{n-1})+2\overline\varepsilon_{n})\cup\overline\Sigma_{D_n}(2\omega_{n}).
$$
We have $$V_{B_n}(2\omega_{n})={\bigwedge}^{n}\mathbb{C}^{2n+1}={\bigwedge}^{n}\mathbb{C}^{2n}\oplus e_0\wedge{\bigwedge}^{n-1}\mathbb{C}^{2n}.$$ Moreover, ${\bigwedge}^{n}\mathbb{C}^{2n}=V_{D_n}(2\omega_{n-1})\oplus V_{D_n}(2\omega_n).$ Therefore
$$
V_{B_n}(2\omega_{n})=e_0\wedge V_{D_n}(\widehat\omega_{n-1})\oplus V_{D_n}(2\omega_{n-1})\oplus V_{D_n}(2\omega_{n}).
$$
The arguments similar to the previous cases finish the proof.
\end{proof}

By Lemma \ref{lemma} we have bijective maps (forgetting exponents corresponding to roots $\varepsilon_i$):
$$
\psi: \Sigma_{B_n}(\omega_p)\rightarrow\Sigma_{D_n}(\widehat\omega_p)\sqcup\Sigma_{D_n}(\omega_{p-1}), \quad p=1,\ldots,n,
$$
$$
\psi :
\Sigma_{B_n}(2\omega_{n})\rightarrow\Sigma_{D_n}(\omega_{n-1}+\omega_n)\sqcup\Sigma_{D_n}(2\omega_{n-1})\sqcup\Sigma_{D_n}(2\omega_{n}).
$$

Assume that we have two decompositions of some signature $\sigma$ of highest weight $\lambda=\sum k_i\omega_i=\sum l_i\varepsilon_i$ of $B_n$:
$$
\sigma=\sigma_1+\ldots+\sigma_k=\sigma'_1+\ldots+\sigma'_l,
$$
where $\sigma_i$ and $\sigma'_j$ are essential signatures for $B_n$ of highest weights in $\{\omega_1,\ldots,\omega_n,2\omega_n\}$.
 Then we can apply the map $\psi$ to these decompositions, and obtain two signatures of $D_n$:
$$
\psi(\sigma_1)+\ldots+\psi(\sigma_k)\quad\textrm{and}\quad\psi(\sigma'_1)+\ldots+\psi(\sigma'_l).
$$
We claim that these two signatures coincide. Obviously, these two signatures have the same exponents, hence we have to verify that the highest weights of these signatures coincide.
Let $s_i$ be the exponents of $\sigma$, corresponding to roots $\varepsilon_i$. It is easy to see that the highest weight of both signatures of $D_n$ is $\sum k_i'\omega_i=\sum l_i'\varepsilon_i$, where $l_i'=l_i-s_i$ and hence

\begin{equation}
\label{defw}
k'_i=k_i-s_i+s_{i+1},\quad i<n,\quad
k_{n}'=k_{n-1}-s_{n-1}+k_n-s_n.
\end{equation}
Hence the highest weights coincide and
$$
\psi(\sigma_1)+\ldots+\psi(\sigma_k)=\psi(\sigma'_1)+\ldots+\psi(\sigma'_l).
$$

Therefore we have a well-defined surjective homomorphism of semigroups:
$$
\psi:\Sigma_{B_n}^{(\cdot)}\rightarrow\Sigma_{D_n}^{(\cdot)}.
$$
The equations (\ref{defw}) imply that for any two $\sigma,\sigma'\in\Sigma^{(\cdot)}_{B_n}$, if $\psi(\sigma)=\psi(\sigma')$ and $\sigma,\sigma'$ have the same highest weight, then $\sigma=\sigma'$.

Notice, that by Lemma \ref{lemma} for any signature $\sigma\in\Sigma_{B_n}^{(\cdot)}$ the inequalities $k_i\geq s_i$ hold.

Let now $\tau\in\Sigma_{D_n}^{(\cdot)}$ be an arbitrary signature of highest weight $\sum k'_i\omega_i$, $s_i$ be some non-negative integers, and let $k_i$ be integers satisfying the equations (\ref{defw}).
\begin{lemma}
\label{lift}
Assume that $k_i\geq s_i$. Then there exists a unique $\sigma\in\Sigma_{B_n}^{(\cdot)}$ with highest weight $\sum k_i\omega_i$ such that $\psi(\sigma)=\tau$ and $s_i$ are the exponents of $\sigma$ corresponding to the roots $\varepsilon_i$.
\end{lemma}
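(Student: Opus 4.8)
The plan is to establish existence and uniqueness separately, using the bijection structure already encoded in Lemma~\ref{lemma}. The key observation is that by Lemma~\ref{lemma}, each essential signature of $B_n$ of a generating highest weight decomposes according to how many $\varepsilon_i$-roots appear, and the map $\psi$ forgets exactly these $\varepsilon_i$-exponents. So a lift $\sigma$ of $\tau$ is determined by specifying, on top of the underlying $D_n$-data recorded by $\tau$, a compatible choice of the $\varepsilon_i$-exponents $s_i$. The equations (\ref{defw}) show how the highest weight $\sum k_i\omega_i$ of the putative lift is reconstructed from the highest weight $\sum k_i'\omega_i$ of $\tau$ together with the $s_i$, so the condition $k_i\ge s_i$ is precisely the constraint observed just before the lemma (``by Lemma~\ref{lemma} for any signature $\sigma\in\Sigma_{B_n}^{(\cdot)}$ the inequalities $k_i\ge s_i$ hold'') that any genuine $B_n$-signature must satisfy.

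\textbf{Uniqueness.} Uniqueness is essentially immediate from the machinery already built. I would argue that if $\sigma,\sigma'\in\Sigma_{B_n}^{(\cdot)}$ both satisfy $\psi(\sigma)=\psi(\sigma')=\tau$ and both have $s_i$ as their $\varepsilon_i$-exponents, then they have the same exponent tuple $\overline\sigma=\overline{\sigma'}$ (the $D_n$-exponents agree because $\psi$ is injective on exponents, and the $\varepsilon_i$-exponents agree by hypothesis). Moreover their highest weights agree, since both are forced to be $\sum k_i\omega_i$ by the equations (\ref{defw}) read in reverse. Then the statement displayed immediately before this lemma --- that $\psi(\sigma)=\psi(\sigma')$ together with equal highest weight forces $\sigma=\sigma'$ --- gives $\sigma=\sigma'$ at once.

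\textbf{Existence.} For existence, I would build $\sigma$ directly by prescribing its exponent tuple. Write $\tau$ via its generator decomposition in $\Sigma_{D_n}^{(\cdot)}$ and set the candidate exponent tuple to be $\overline\tau$ augmented by the prescribed values $s_i$ in the slots corresponding to the short roots $\varepsilon_i$; define the highest weight by $\sum k_i\omega_i$ using (\ref{defw}). I then must check two things: first, that this tuple actually arises from an essential signature of $B_n$, and second, that it lies in the subsemigroup $\Sigma_{B_n}^{(\cdot)}$ generated by the chosen highest weights. The essentiality and membership should follow by lifting the generator decomposition of $\tau$ through Lemma~\ref{lemma}: each $D_n$-generator appearing in $\tau$ lifts, via the disjoint-union description in Lemma~\ref{lemma} and the $\psi$-correspondence, to a $B_n$-generator of highest weight in $\{\omega_1,\dots,\omega_n,2\omega_n\}$, where the hypothesis $k_i\ge s_i$ guarantees that enough ``short-root slots'' are available to absorb all the $s_i$. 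Summing these lifted generators produces an element of $\Sigma_{B_n}^{(\cdot)}$ whose image under $\psi$ is $\tau$ and whose $\varepsilon_i$-exponents are $s_i$.

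\textbf{Main obstacle.} The routine parts are the weight bookkeeping via (\ref{defw}) and the uniqueness argument, which are direct consequences of formulas already in the excerpt. The genuinely delicate step is the existence claim: one must distribute the prescribed short-root exponents $s_i$ among the summands of a generator decomposition of $\tau$ so that the resulting summands are \emph{still essential} for $B_n$ and of the allowed highest weights. The constraint $k_i\ge s_i$ is exactly what makes this combinatorially feasible, but verifying that a valid distribution always exists --- i.e.\ that the short-root exponents can be parcelled out greedily across the generators without violating the per-generator bounds forced by Lemma~\ref{lemma} --- is where the real content lies, and I expect this to require a careful inductive or greedy allocation argument rather than a one-line appeal.
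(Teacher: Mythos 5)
Your overall strategy matches the paper's: lift a decomposition of $\tau$ term by term through Lemma~\ref{lemma}, using $k_i\ge s_i$ to make room for the short-root exponents. Your uniqueness argument is fine (the paper dismisses it with ``obviously''). But the existence half is not a proof: you correctly identify that the whole content of the lemma is the distribution of the prescribed $s_i$ among the summands of a decomposition of $\tau$ so that each summand remains an essential $B_n$-signature of an allowed highest weight, and then you explicitly defer that step (``I expect this to require a careful inductive or greedy allocation argument''). That deferred step is the lemma; without it you have only restated what needs to be shown.

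For comparison, the paper resolves the allocation as follows. The pairs of tuples $(k_i,s_i)$ with $0\le s_i\le k_i$ form the integer points of a cone in $\mathbb{R}^{2n}$ whose semigroup of lattice points is generated by the $2n$ unit vectors $\{k_i=1\}$ and $\{k_i=s_i=1\}$; each generator is realizable as an essential $B_n$-signature of fundamental highest weight $\omega_i$ with short-root exponents in $\{0,1\}$ and zero long-root exponents (the two types landing in the two halves of the disjoint union in Lemma~\ref{lemma}). This gives a decomposition $\tilde\sigma=\tilde\sigma_1+\ldots+\tilde\sigma_p$ of the short-root data. Separately, property $(\dagger)$ is used to choose a decomposition $\tau=\tau_1+\ldots+\tau_m$ with all terms in $\Sigma'_{D_n}$ or all in $\Sigma''_{D_n}$; one then matches the $\psi(\tilde\sigma_i)$ against the $\tau_j$ by highest weight (after combining pairs of $\omega_n$-generators into $\widehat\omega_n$-terms to force $p=m$), and finally applies Lemma~\ref{lemma} to each matched pair to produce the essential summands $\sigma_i$ with $\psi(\sigma_i)=\tau_i$. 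Note in particular that your sketch makes no use of $(\dagger)$, which is needed to guarantee the highest-weight matching works at the tail $\omega_{n-1},\omega_n,2\omega_{n-1},2\omega_n$; a purely greedy allocation without controlling which of $\Sigma'_{D_n}$, $\Sigma''_{D_n}$ the decomposition of $\tau$ lives in can fail there.
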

We call $s_i$ the \emph{lifting parameters} for $\tau$.
\begin{proof}

Obviously, if $\sigma$ exists then it is unique.

The inequalities $k_i\geq s_i$, where $k_i,s_i\in\mathbb{R}_{\geq0}$, define a cone in $\mathbb{R}^{2n}$. The semigroup of integer points in this cone is generated by the following vectors given below by the values of their nonzero coordinates:
 \begin{enumerate}
 \item $k_i=1$ ($n$ vectors for $i=1,\ldots,n$),
 \item $k_i=s_i=1$ ($n$ vectors for $i=1,\ldots,n$).
 \end{enumerate}

 We can consider these vectors and hence any integer point in this cone as a tuple of exponents of an essential signature of $B_n$ of highest weight $\sum k_i\omega_i$ with exponents $s_i$ corresponding to short roots and zero exponents corresponding to long roots.

 Since the property $(\dagger)$ holds we may fix some decomposition of the signature $\tau=\tau_1+\ldots+\tau_m$, where $\tau_i$ are essential signatures of highest weights in $\{\omega_1,\ldots,\omega_{n-2},\widehat\omega_{n-1},\omega_{n-1},2\omega_{n-1}\}$ or in $\{\omega_1,\ldots,\omega_{n-2},\widehat\omega_{n-1},\omega_n,2\omega_n\}.$
  Let $\tilde\sigma=\tilde\sigma_1+\ldots+\tilde\sigma_{p}$ be a decomposition of a signature of highest weight $\sum k_i\omega_i$ with zero exponents, corresponding to long roots, and exponents $s_i$ corresponding to short roots via signatures $\tilde\sigma_i$ of fundamental highest weights corresponding to the generating vectors of the above semigroup. The signatures $\psi(\tilde\sigma)$ and $\tau$ have the same highest weight. Therefore for any signature $\tilde\sigma_i$ of highest weight $\omega_j, j< n-1$ one can find a signature $\tau_k$ such that $\psi(\tilde\sigma_i)$ and $\tau_k$ have the same highest weight. This fact is also true for a signature $\tilde\sigma_i$ of highest weight $\omega_{n-1}$ because of our choice of decomposition $\tau$. Combining (if needed) some pairs of signatures $\tilde\sigma_i$ of highest weight $\omega_n$ into signatures of highest weight $\widehat\omega_n$, one can obtain a decomposition with $p=m$ such that $\psi(\tilde\sigma_i)$ and $\tau_i$ have the same highest weight. By Lemma~\ref{lemma} there exists an essential signature $\sigma_i$ such that $\sigma_i$ and $\tilde\sigma_i$ have the same highest weight and exponents corresponding to short roots, and $\psi(\sigma_i)=\tau_i$. Now put $\sigma=\sigma_1+\ldots+\sigma_m$.

\end{proof}

\begin{sled}
\label{sledlift}
For any signature $\sigma\in\Sigma_{B_n}^{(\cdot)}$ and any decomposition of the signature $\psi(\sigma)=\tau_1+\ldots+\tau_m$, where all $\tau_i\in\Sigma'_{D_n}$ or all $\tau_i\in\Sigma''_{D_n}$, there exists a decomposition $\sigma=\sigma_1+\ldots+\sigma_m$ such that $\psi(\sigma_i)=\tau_i$ for all $i=1,\ldots,m$.

\end{sled}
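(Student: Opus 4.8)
The plan is to turn the statement into a coordinate-by-coordinate distribution problem and then apply Lemma~\ref{lift} one piece at a time, closing the argument with the uniqueness clause of that lemma. Write $s=(s_1,\dots,s_n)$ for the exponents of $\sigma$ at the short roots $\varepsilon_1,\dots,\varepsilon_n$, let $\sum k_i\omega_i$ and $\sum k_i'\omega_i$ be the highest weights of $\sigma$ and of $\psi(\sigma)=\tau$, and recall that these data are linked by the relations (\ref{defw}). The reduction I would make is: it suffices to attach to each $\tau_j$ non-negative lifting parameters $s^{(j)}=(s^{(j)}_1,\dots,s^{(j)}_n)$ that are admissible for $\tau_j$ (so that $(\tau_j,s^{(j)})$ satisfies the hypotheses of Lemma~\ref{lift}; note $\tau_j\in\Sigma_{D_n}^{(\cdot)}$ by $(\dagger)$) and that satisfy $\sum_j s^{(j)}=s$. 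Indeed, Lemma~\ref{lift} then yields essential signatures $\sigma_j\in\Sigma_{B_n}^{(\cdot)}$ with $\psi(\sigma_j)=\tau_j$ and short exponents $s^{(j)}$; their sum lies in $\Sigma_{B_n}^{(\cdot)}$, maps under $\psi$ to $\tau$, and has short exponents $\sum_j s^{(j)}=s$, so by the uniqueness part of Lemma~\ref{lift} it must equal $\sigma$, giving the desired decomposition.

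First I would use Lemma~\ref{lemma} to list the admissible patterns $s^{(j)}$ according to the $D_n$-highest weight of $\tau_j$; since each such pattern is the short-exponent tuple of a genuine essential $B_n$-signature, the admissibility inequalities $k^{(j)}_i\ge s^{(j)}_i$ hold for free. The crucial structural point is that every admissible pattern is supported on a single coordinate. A piece of weight $\omega_q$ with $q\le n-2$ admits exactly two lifts, one to $B_n$-weight $\omega_q$ with $s^{(j)}=0$ and one to $B_n$-weight $\omega_{q+1}$ with $s^{(j)}_{q+1}=1$, so it is a flexible carrier for coordinate $q+1$ only. At coordinate $n$ the two cases differ: in the $\Sigma'_{D_n}$ case a piece of weight $\omega_{n-1}$ is \emph{forced} to carry $s^{(j)}_n=1$, a piece of weight $2\omega_{n-1}$ is forced to carry $s^{(j)}_n=2$, and a piece of weight $\omega_{n-1}+\omega_n$ is flexible (carrying $0$ or $1$); in the $\Sigma''_{D_n}$ case the pieces of weight $\omega_n$ and $2\omega_n$ are forced to carry $0$, while $\omega_{n-1}+\omega_n$ is again flexible. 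Hence the search for the $s^{(j)}$ decouples across the coordinates $2,\dots,n$, and on each coordinate it amounts to choosing how many of the relevant flexible carriers should carry their unit.

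Next I would check feasibility coordinate by coordinate, the only genuinely arithmetic step. For $2\le i\le n-1$ the carriers of coordinate $i$ are exactly the $k_{i-1}'$ pieces of weight $\omega_{i-1}$, and I must switch on $s_i$ of them; this is possible because $k_{i-1}'=k_{i-1}-s_{i-1}+s_i\ge s_i$, using the inequality $k_{i-1}\ge s_{i-1}$ supplied by Lemma~\ref{lemma}. For coordinate $n$ in the $\Sigma'_{D_n}$ case the forced contributions of the $\omega_{n-1}$- and $2\omega_{n-1}$-pieces account for $k_{n-1}'-k_n'$ units, so the number of flexible $\omega_{n-1}+\omega_n$-pieces that must carry a unit equals $s_n-(k_{n-1}'-k_n')=k_n-s_n$; this is non-negative and does not exceed the number $k_n'$ of flexible pieces, again by the inequalities $k_i\ge s_i$ together with (\ref{defw}). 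The $\Sigma''_{D_n}$ case is easier, requiring only $s_n\le k_{n-1}'$, which holds for the same reason. A boundary convention $\overline\Sigma_{D_n}(\omega_0)=\{0\}$ disposes of coordinate~$1$.

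I expect the main obstacle to be precisely the bookkeeping at coordinate $n$: one must verify that the mixture of forced contributions of sizes $1$ and $2$ imposes no parity or integrality obstruction. The observation that resolves this, and which I would emphasize, is that the $\omega_{n-1}$- and $2\omega_{n-1}$-contributions are \emph{forced} rather than chosen, so that all remaining freedom lives among the flexible $\omega_{n-1}+\omega_n$-pieces; once this is recognized, the feasibility inequalities reduce cleanly to the single global fact $k_i\ge s_i$ from Lemma~\ref{lemma} combined with (\ref{defw}), and the required $s^{(j)}$ can be produced directly.
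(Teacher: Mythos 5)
Your proof is correct and follows essentially the same route as the paper: the paper's proof of this corollary is a one-line reference back to the proof of Lemma~\ref{lift}, which likewise produces the decomposition by assigning lifting parameters to the given pieces $\tau_j$, lifting each piece via Lemma~\ref{lemma}, and concluding by the uniqueness clause. Your coordinate-by-coordinate feasibility check (in particular the computation $s_n-(k_{n-1}'-k_n')=k_n-s_n$ at the last coordinate) simply makes explicit the bookkeeping that the paper leaves implicit.
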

\begin{proof}
 It follows directly from the proof of Lemma \ref{lift}.
\end{proof}
\begin{lemma}
\label{phi}
The property $(*)$ holds for $\Sigma_{B_n}^{(\cdot)}$.
\label{theorem2}
\end{lemma}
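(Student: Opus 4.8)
The plan is to transport the problem to $D_n$ along the surjective homomorphism $\psi\colon\Sigma_{B_n}^{(\cdot)}\to\Sigma_{D_n}^{(\cdot)}$ and then feed it into the induction hypothesis. Starting from two decompositions
$$\sigma=\sigma_1+\ldots+\sigma_k=\tau_1+\ldots+\tau_l$$
of a signature $\sigma\in\Sigma_{B_n}^{(\cdot)}$ into essential signatures of highest weights in $\{\omega_1,\ldots,\omega_n,2\omega_n\}$, I would apply $\psi$ to obtain two decompositions of $\tau:=\psi(\sigma)$ into essential signatures of $D_n$ whose highest weights are generators of $\Sigma_{D_n}^{(\cdot)}$ (here one uses Lemma~\ref{lemma}, which shows each $\psi(\sigma_i)$ is such a generator or the identity). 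The goal is then to connect the two $D_n$-decompositions by a chain of admissible operations and to lift that chain, step by step, to a chain of admissible $B_n$-operations joining the original two decompositions.

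On the $D_n$ side I would first make both decompositions \emph{pure}. By property $(\dagger)$ we have $\tau\in\Sigma'_{D_n}\cup\Sigma''_{D_n}$; assume without loss of generality $\tau\in\Sigma'_{D_n}$, which amounts to the inequality $K_{n-1}\ge K_n$ between the $\omega_{n-1}$- and $\omega_n$-coefficients of the highest weight of $\tau$. The images $\psi(\sigma_i)$ may involve the ``$\Sigma''$-type'' spinor generators $\omega_n,2\omega_n$ (a $B_n$-summand of highest weight $\omega_n$ or $2\omega_n$ can map to either type). I would eliminate these by pairing each $\Sigma''$-type summand with a $\Sigma'$-type one to form common generators of highest weight $\omega_{n-1}+\omega_n$: the relation $K_{n-1}\ge K_n$ guarantees that the supply of $\omega_{n-1}$-weight dominates, so the pairing can always be completed, and its validity rests on the fact that a sum of essential signatures is again essential, together with property $(**)$ for $\Sigma'_{D_n}$ and $\Sigma''_{D_n}$ (used to split $2\omega_{n-1}$ or $2\omega_n$ into two spinors when needed). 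Once both decompositions lie entirely in $\Sigma'_{D_n}$, property $(*)$ for $\Sigma'_{D_n}$ (induction hypothesis) connects them by admissible $D_n$-operations.

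It then remains to lift the whole chain of admissible $D_n$-operations to $B_n$. The pure endpoints are lifted using Corollary~\ref{sledlift}, and each intermediate $D_n$-operation is lifted by Lemma~\ref{lift}: once the $D_n$-signatures of the output and the short-root exponents carried by each summand are fixed, the $B_n$-signatures are uniquely determined and essential. Concretely, an operation replacing a pair $\tau_i,\tau_j$ by $\tau_i',\tau_j'$ is lifted by redistributing the short-root exponents $s^{(i)}+s^{(j)}$ among the new summands; the regrouping of two $\omega_n$'s into a single $2\omega_n$ (and its reverse) on the $B_n$ side is realized by genuine type-2 and type-3 operations.

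The hard part will be precisely this lifting of operations: after an admissible $D_n$-operation the short-root exponents must be reassigned to the new summands so that the totals are preserved \emph{and} each summand satisfies the constraint $k_i\ge s_i$ required for Lemma~\ref{lift} to apply, while the resulting reassignment must itself be realizable by admissible $B_n$-operations (of all three types). Establishing that such a compatible reassignment always exists, i.e. that admissibility is preserved under $\psi$ in both directions, is the technical core of the argument; the passage through pure decompositions is exactly what makes the lifting via Corollary~\ref{sledlift} and Lemma~\ref{lift} available.
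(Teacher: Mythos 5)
Your overall strategy coincides with the paper's: apply $\psi$, use $(\dagger)$ to make both image decompositions pure (all summands in $\Sigma'_{D_n}$ or all in $\Sigma''_{D_n}$), invoke $(*)$ for $\Sigma'_{D_n}$, $\Sigma''_{D_n}$, and lift the resulting chain of admissible operations via Corollary~\ref{sledlift}. That part is sound. But there is a genuine gap at the end, and you have mislocated where the real difficulty sits. The lifting of individual admissible operations between pure decompositions is exactly what Corollary~\ref{sledlift} already supplies, so the step you flag as ``the technical core'' is not the problem. The problem is the endgame: after the chain is lifted you arrive at $k=l$ and $\psi(\sigma_1)=\psi(\sigma'_1),\ldots,\psi(\sigma_l)=\psi(\sigma'_l)$, and this does \emph{not} force $\sigma_i=\sigma'_i$. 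The fibers of $\psi$ over a fixed $D_n$-signature contain $B_n$-signatures of \emph{different highest weights}: by Lemma~\ref{lemma} a $D_n$-signature of highest weight $\widehat\omega_j$ can be the image both of a summand of highest weight $\omega_j$ with zero short-root exponents and of a summand of highest weight $\omega_{j+1}$ with $s_{j+1}=1$. (The paper's observation that $\psi(\sigma)=\psi(\sigma')$ plus equal highest weights implies $\sigma=\sigma'$ does not apply termwise, since the individual summands' highest weights need not match even though their sums do.)

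The paper closes this gap with an explicit argument that your proposal lacks: first strip off summands whose form is forced (highest weight $\omega_n$, and $\widehat\omega_n$ with $s_n\in\{0,2\}$), then take $\sigma_1$ of highest weight $\widehat\omega_j$ with $j$ minimal among the weights occurring; either $\sigma'_1$ has the same highest weight and hence equals $\sigma_1$, or $\sigma'_1$ has highest weight $\widehat\omega_{j+1}$ with $s_{j+1}(\sigma'_1)=1$, in which case a counting of short-root exponents produces a summand $\sigma_2$ on the left with highest weight $\widehat\omega_{j+1}$ and $s_{j+1}(\sigma_2)=1$, and one \emph{swaps the long-root exponents} of $\sigma_1$ and $\sigma_2$ — an admissible operation by Lemma~\ref{lemma} — to manufacture $\sigma'_1$ inside the left-hand decomposition; induction on $l$ then finishes. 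Without some version of this matching-and-swapping argument your proof is incomplete: connecting the two decompositions downstairs and lifting does not by itself connect the two given decompositions upstairs.
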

\begin{proof}

For any signature $\sigma$ of $B_{n}$ we denote by $s_{i}(\sigma)$ the exponents of $\sigma$ corresponding to the roots $\varepsilon_i$.
Assume that we have two decompositions of some signature $\sigma\in\Sigma_{B_n}^{(\cdot)}$ of highest weight $\lambda=\sum k_i\omega_i$:
$$
\sigma=\sigma_1+\ldots+\sigma_k=\sigma'_1+\ldots+\sigma'_l,
$$
where $\sigma_i$ and $\sigma'_j$ are essential signatures for $B_n$ of highest weights in $\{\omega_1,\ldots,\omega_n,2\omega_n\}$.

Apply the map $\psi$ to these decompositions:
$$
\psi(\sigma)=\psi(\sigma_1)+\ldots+\psi(\sigma_k)=\psi(\sigma'_1)+\ldots+\psi(\sigma'_l).
$$

The property ($\dagger$) implies that one can obtain other two decompositions of $\psi(\sigma)$ with all signatures in $\Sigma'_{D_n}$ or in $\Sigma''_{D_n}$ by applying admissible operations to the two initial decompositions. Indeed, one can replace a pair of signatures with highest weights $\{\omega_{n-1},\omega_n\}, \{2\omega_{n-1},2\omega_n\},\{2\omega_{n-1},\omega_n\},\{\omega_{n-1},2\omega_n\}$ with a signature of highest weight $\widehat\omega_{n-1}$ in the first case or with a pair of signatures with highest weights $\{\widehat\omega_{n-1},\widehat\omega_{n-1}\}, \{\widehat\omega_{n-1},\omega_{n-1}\},\{\widehat\omega_{n-1},\omega_{n}\}$, respectively. By Corollary \ref{sledlift} we can lift any such admissible operation to $B_n$. Thus we may assume that all signatures $\psi(\sigma_i),\psi(\sigma'_j)$ are in $\Sigma'_{D_n}$ or in $\Sigma''_{D_n}$.

 Since property ($*$) holds for $\Sigma'_{D_n}$ and $\Sigma''_{D_n}$ and each admissible operation on decompositions of signatures in $\Sigma'_{D_n}$ or in $\Sigma''_{D_n}$ is liftable to $\Sigma_{B_n}^{(\cdot)}$, we may assume that $k=l$ and
$$
\psi(\sigma_1)=\psi(\sigma'_1),\quad\ldots\quad,\quad\psi(\sigma_l)=\psi(\sigma'_l).
$$
We may suppose that there are no signatures of highest weight $\omega_n$ in these decompositions of $\sigma$. Indeed, assume that $\sigma_1$ has the highest weight $\omega_n$. Then, by Lemma \ref{lemma}, $\sigma'_1$ has the highest weight $\omega_n$, too, whence $\sigma_1=\sigma'_1$, and we may finish the proof by induction on $l$.
 Similarly, we may suppose that there are no signatures of highest weight $\widehat\omega_n$ with $s_n=0$ or $s_n=2$.

Assume that $\sigma_1$ has the highest weight $\widehat\omega_j$ such that $k_i=0, i<j$. Then either $\sigma'_1$ has the highest weight $\widehat\omega_j$, too, whence $\sigma'_1=\sigma_1$, and we are done by induction, or $\sigma'_1$ has the highest weight $\widehat\omega_{j+1}$ and $s_{j+1}(\sigma'_1)=1$, while $\sigma_1$ has zero exponents corresponding to the short roots. Therefore there exists a signature in the left-hand decomposition of $\sigma$, say $\sigma_2$, of highest weight $\widehat\omega_{j+1}$ such that $s_{j+1}(\sigma_2)=1$. We may swap the exponents of $\sigma_1$ and $\sigma_2$ corresponding to the long roots and obtain two new signatures $\widetilde\sigma_1$ and $\widetilde\sigma_2$. Obviously, $\widetilde\sigma_2=\sigma'_1$ and by Lemma \ref{lemma} $\widetilde\sigma_1$ is essential. Hence replacing $\sigma_1,\sigma_2$ with $\tilde\sigma_1,\tilde\sigma_2$ is an admissible operation. Induction on $l$ finishes the proof.
\end{proof}

\begin{lemma}
The property $(**)$ holds for $\Sigma^{(\cdot)}_{B_n}$.
\label{theorem3}
\end{lemma}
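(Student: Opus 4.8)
The plan is to reduce the claim for $B_n$ to property $(**)$ for $D_n$ (which is part of the induction hypothesis) via the homomorphism $\psi$, and then to lift the resulting $D_n$-decomposition back to $B_n$ by means of Lemma~\ref{lift}. Concretely, let $\sigma$ be an essential signature of $B_n$ of highest weight $\lambda=\lambda_i+\lambda_j$ with $\lambda_i,\lambda_j\in\{\omega_1,\ldots,\omega_n,2\omega_n\}$, let $s=(s_1,\ldots,s_n)$ be its exponents corresponding to the short roots $\varepsilon_i$, and set $\tau=\psi(\sigma)$, a signature of $D_n$ whose exponents are the long-root exponents of $\sigma$ and whose highest weight $\mu=\sum k_i'\omega_i$ is computed from $\lambda$ and $s$ by the formulas (\ref{defw}). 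The goal is to produce a decomposition $\sigma=\sigma_1+\sigma_2$ into essential signatures of highest weights in $\{\omega_1,\ldots,\omega_n,2\omega_n\}$.

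First I would establish a branching description of the essential signatures of highest weight $\lambda_i+\lambda_j$ over $D_n$, extending Lemma~\ref{lemma} from the fundamental highest weights to the degree-two weights $\lambda_i+\lambda_j$. The point of this step is to show that for every essential $\sigma$ as above the $D_n$-signature $\tau=\psi(\sigma)$ is itself essential, and that its highest weight $\mu$ splits as a sum $\mu=\mu_1+\mu_2$ of two weights lying in the $D_n$-generating set $\{\omega_1,\ldots,\omega_{n-2},\omega_{n-1},\omega_n,\omega_{n-1}+\omega_n,2\omega_{n-1},2\omega_n\}$. This uses the $D_n$-module decomposition of $V_{B_n}(\lambda_i+\lambda_j)$ (the branching $\mathfrak{so}_{2n+1}\downarrow\mathfrak{so}_{2n}$) together with the fact that in the extended monomial order the short-root exponents dominate, so that essentiality of $\sigma$ for $B_n$ forces essentiality of its long-root part for $D_n$. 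The admissible $\psi$-images of a single $B_n$-generator, read off from Lemma~\ref{lemma}, are precisely the weights $0,\omega_1,\ldots,\omega_{n-1},\omega_n,\omega_{n-1}+\omega_n,2\omega_{n-1},2\omega_n$, so each summand $\mu_k$ is realizable as $\psi$ of a $B_n$-generator $\lambda^{(k)}$ with $\lambda^{(1)}+\lambda^{(2)}=\lambda$.

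With this in hand the induction does the work: since property $(**)$ holds for $\Sigma'_{D_n}$ and $\Sigma''_{D_n}$, the essential $D_n$-signature $\tau$ of highest weight $\mu=\mu_1+\mu_2$ decomposes as $\tau=\tau_1+\tau_2$ with $\tau_k$ essential of highest weight $\mu_k$. It then remains to distribute the lifting parameters: I would split $s=s^{(1)}+s^{(2)}$ into nonnegative tuples so that $\psi$ applied to $\lambda^{(k)}$ with parameters $s^{(k)}$ gives $\mu_k$ via (\ref{defw}), and so that the liftability inequalities $k_i^{(k)}\ge s_i^{(k)}$ of Lemma~\ref{lift} hold for each piece. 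Applying Lemma~\ref{lift} to each pair $(\tau_k,s^{(k)})$ yields essential signatures $\sigma_k\in\Sigma_{B_n}^{(\cdot)}$ of highest weight $\lambda^{(k)}$ with $\psi(\sigma_k)=\tau_k$ and short-root exponents $s^{(k)}$. Finally $\sigma_1+\sigma_2$ has the same long-root exponents (those of $\tau=\tau_1+\tau_2$), the same short-root exponents $s^{(1)}+s^{(2)}=s$, and the same highest weight $\lambda^{(1)}+\lambda^{(2)}=\lambda$ as $\sigma$; hence $\sigma_1+\sigma_2=\sigma$ as signatures and $\sigma\in\Sigma_{B_n}^{(\cdot)}$, as required.

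The main obstacle is the branching step: verifying that $\psi$ carries essential signatures of the degree-two weights $\lambda_i+\lambda_j$ to essential $D_n$-signatures whose highest weight is a sum of two $D_n$-generators, and, intertwined with this, that the short-root exponents $s$ can always be partitioned compatibly so that both liftability inequalities of Lemma~\ref{lift} are satisfied. I expect these two points to require a careful case analysis over the pairs $\{\lambda_i,\lambda_j\}$, the genuinely new phenomena occurring when one of them is $\omega_n$ or $2\omega_n$, where $s_n$ may reach $2$ and $\mu$ can involve $2\omega_{n-1}$, $2\omega_n$, or $\omega_{n-1}+\omega_n$. As a consistency check one can, following the recipe preceding Theorem~\ref{theorem1}, count the signatures of highest weight $\lambda_i+\lambda_j$ produced in this way and compare the total with $\dim V_{B_n}(\lambda_i+\lambda_j)$ given by Weyl's formula; equality confirms that no essential signature has been missed.
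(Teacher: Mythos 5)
Your strategy---push an essential $\sigma$ of highest weight $\lambda_i+\lambda_j$ down to $D_n$ via $\psi$, decompose there using the inductive $(**)$, and lift back with Lemma~\ref{lift}---is genuinely different from the paper's, and the step you defer as ``the main obstacle'' is precisely where it breaks down. Lemma~\ref{lemma} describes $\overline\Sigma_{B_n}(\lambda)$ only for $\lambda$ in the generating set, where $V_{B_n}(\lambda)$ is an exterior power (or the spin module) with a transparent $D_n$-branching and where the essential signatures can be identified as minimal spanning sets of explicit $D_n$-submodules. For $\lambda=\lambda_i+\lambda_j$ no such description is available at this point in the induction, and the assertion that $\psi$ carries every essential $B_n$-signature of highest weight $\lambda_i+\lambda_j$ to an essential $D_n$-signature (with the inequalities $k_i\ge s_i$ and a highest weight splitting into two $D_n$-generators) is essentially the content of Proposition~\ref{conj3bn} restricted to these weights. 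In the paper that proposition is derived only \emph{after} Corollary~\ref{conj1bn}, i.e.\ after $(**)$ itself has been established, so invoking it here is circular; proving it independently would require a full branching analysis of $V_{B_n}(\omega_p+\omega_q)$, which is substantially harder than Lemma~\ref{lemma}. On top of that, the partition $s=s^{(1)}+s^{(2)}$ compatible with both sets of liftability inequalities is asserted rather than produced.

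The paper avoids all of this with a pure counting argument, which is in fact your closing ``consistency check'' promoted to the whole proof: by Proposition~\ref{1} the set $\Sigma_{B_n}(\lambda_p)+\Sigma_{B_n}(\lambda_q)$ consists of essential signatures of highest weight $\lambda_p+\lambda_q$; its cardinality is computed from Lemma~\ref{lemma} together with the inductive $(**)$ for $\Sigma'_{D_n}$ and $\Sigma''_{D_n}$ (and property $(\dagger)$ for the pairs involving $\omega_n$ and $2\omega_n$, to absorb cross terms such as $\Sigma_{D_n}(2\omega_{n-1})+\Sigma_{D_n}(2\omega_n)$ into a single $D_n$-term); and this count is checked to equal $\dim V_{B_n}(\lambda_p+\lambda_q)$ via the Weyl dimension formula or the orthogonal branching rules. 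Since the subset has full cardinality, it is everything, and no individual essential signature of highest weight $\lambda_i+\lambda_j$ ever needs to be analyzed. I would recommend restructuring your argument around this cardinality comparison rather than attempting the direct lift.
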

\begin{proof}

To prove property ($**$) we will compute the number of signatures in $\Sigma_{B_n}(\lambda_p)+\Sigma_{B_n}(\lambda_q)$, and compare it with dim$V_{B_n}(\lambda_p+\lambda_q)$ for all pairs $\lambda_p,\lambda_q$, except $\lambda_p=\lambda_q=\omega_n$.

 Consider, for example, the pair $\omega_p,\omega_q$. By Lemma \ref{lemma} we have  the following equalities:
$$
|\Sigma_{B_n}(\omega_p)+\Sigma_{B_n}(\omega_q)| = |\Sigma_{D_n}(\widehat\omega_p)+\Sigma_{D_n}(\widehat\omega_q)|+|\Sigma_{D_n}(\omega_{p-1})+\Sigma_{D_n}(\widehat\omega_q)|+ $$
$$
+|\Sigma_{D_n}(\widehat\omega_{p})+\Sigma_{D_n}(\omega_{q-1})|+|\Sigma_{D_n}(\omega_{p-1})+\Sigma_{D_n}(\omega_{q-1})|,\quad p\neq q;
$$
$$
|\Sigma_{B_n}(\omega_p)+\Sigma_{B_n}(\omega_p)| = |\Sigma_{D_n}(\widehat\omega_p)+\Sigma_{D_n}(\widehat\omega_p)|+|\Sigma_{D_n}(\omega_{p-1})+\Sigma_{D_n}(\widehat\omega_p)|+ $$
$$
+|\Sigma_{D_n}(\omega_{p-1})+\Sigma_{D_n}(\omega_{p-1})|.
$$

Since ($**$) holds for $\Sigma'_{D_n}$ and $\Sigma''_{D_n}$, it remains to verify the following equalities:
\begin{enumerate}
\item $\dim V_{B_n}(\omega_p+\omega_q)=\dim V_{D_n}(\widehat\omega_p+\widehat\omega_q)+\dim V_{D_n}(\widehat\omega_p+\omega_{q-1})+
    \newline
    +\dim V_{D_n}(\omega_{p-1}+\widehat\omega_q)+\dim V_{D_n}(\omega_{p-1}+\omega_{q-1}),\quad p\neq q;$
 \item  $\dim V_{B_n}(2\omega_p)=\dim V_{D_n}(2\widehat\omega_p)+\dim V_{D_n}(\widehat\omega_p+\omega_{p-1})+\dim V_{D_n}(2\omega_{p-1})$.
\end{enumerate}
One can easily verify that all equalities above are identities.

We use similar arguments for pairs $2\omega_n,\omega_p$, $p\neq n$.
For the pairs $2\omega_n$, $2\omega_n$ and $2\omega_n,\omega_n$ one should use the same arguments with the property ($\dagger$), which implies that $$\Sigma_{D_n}(2\omega_{n-1})+\Sigma_{D_n}(2\omega_n)\subset\Sigma_{D_n}(2\widehat\omega_{n-1})=\Sigma_{D_n}(\widehat\omega_{n-1})+\Sigma_{D_n}(\widehat\omega_{n-1}),$$
$$\Sigma_{D_n}(2\omega_{n-1})+\Sigma_{D_n}(\omega_n)\subset\Sigma_{D_n}(\widehat\omega_{n-1}+\omega_{n-1})=\Sigma_{D_n}(\widehat\omega_{n-1})+\Sigma_{D_n}(\omega_{n-1}),$$
$$\Sigma_{D_n}(2\omega_{n})+\Sigma_{D_n}(\omega_{n-1})\subset\Sigma_{D_n}(\widehat\omega_{n-1}+\omega_{n})=\Sigma_{D_n}(\widehat\omega_{n-1})+\Sigma_{D_n}(\omega_{n}).$$

By \cite[Ref. Chap.,\S 2, Table 5]{[ÂÎ]} we have the formulas for all dim$V_{B_{n}}(\lambda_i+\lambda_j)$ and dim$V_{D_{n}}(\lambda_i+\lambda_j)$.
 One can easily verify that similar equalities for the remaining pairs of
$\lambda_i,\lambda_j$ are true, too. Therefore ($**$) holds.
Notice that these equalities also follow from branching rules for orthogonal Lie algebras \cite[Chap. XVIII, \S 129]{[Z]}.
\end{proof}
\begin{sled}
\label{conj1bn}
The semigroup of essential signatures for $B_n$ is generated by essential signatures of highest weights in $\{\omega_1,\ldots,\omega_n,2\omega_n\}$.

\end{sled}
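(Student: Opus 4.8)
The plan is to deduce this corollary as an immediate formal consequence of Theorem~\ref{theorem1}, once its two hypotheses are in place. First I would observe that the two preceding lemmas furnish exactly those hypotheses for the list $\{\lambda_1,\ldots,\lambda_{n+1}\}=\{\omega_1,\ldots,\omega_n,2\omega_n\}$: Lemma~\ref{phi} asserts property $(*)$ for $\Sigma^{(\cdot)}_{B_n}$, and Lemma~\ref{theorem3} asserts property $(**)$. Hence Theorem~\ref{theorem1} applies verbatim and gives $\Sigma(S)=\Sigma^{(\omega_1,\ldots,\omega_n,2\omega_n)}_{B_n}=\Sigma^{(\cdot)}_{B_n}$, where $S$ denotes the semigroup of dominant weights generated by $\{\omega_1,\ldots,\omega_n,2\omega_n\}$.

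It then remains to identify $S$ and $\Sigma(S)$ correctly. Since the fundamental weights $\omega_1,\ldots,\omega_n$ are all among the chosen generators, and every dominant weight of $B_n$ is a $\mathbb{Z}_{\geq0}$-combination of $\omega_1,\ldots,\omega_n$, the semigroup $S$ is the full monoid of dominant weights; the extra generator $2\omega_n$ contributes nothing new at the level of weights and is present only because the essential signatures of highest weight $2\omega_n$ need not decompose into essential signatures of highest weight $\omega_n$ (this is visible already in Lemma~\ref{lemma}). Consequently $\Sigma(S)$ is precisely the semigroup $\Sigma_{B_n}$ of \emph{all} essential signatures of $B_n$. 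Combining the two displayed equalities yields $\Sigma_{B_n}=\Sigma(S)=\Sigma^{(\cdot)}_{B_n}=\Sigma_{B_n}^{(\omega_1,\ldots,\omega_n,2\omega_n)}$, which is exactly the assertion.

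The statement carries no independent difficulty, so I do not expect a genuine obstacle here: all the substantive work has already been spent in establishing the hypotheses of Theorem~\ref{theorem1}, namely the decomposition-matching argument of Lemma~\ref{phi} (projecting two arbitrary decompositions down to $D_n$ via $\psi$, reducing to $\Sigma'_{D_n}$ or $\Sigma''_{D_n}$ using $(\dagger)$, and lifting admissible operations back to $B_n$ through Corollary~\ref{sledlift}) and the dimension count of Lemma~\ref{theorem3}. The single point that deserves an explicit remark is the bookkeeping in the previous paragraph: one must check that $S$ genuinely exhausts the dominant Weyl chamber, so that $\Sigma(S)$ is the entire essential-signature semigroup and not a proper subsemigroup. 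This is immediate once one notes that $2\omega_n$ adds nothing to the weight semigroup already generated by $\omega_1,\ldots,\omega_n$.
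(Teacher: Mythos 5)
Your proof is correct and follows exactly the route the paper intends: the corollary is an immediate application of Theorem~\ref{theorem1} with $\{\lambda_1,\ldots,\lambda_{n+1}\}=\{\omega_1,\ldots,\omega_n,2\omega_n\}$, whose hypotheses $(*)$ and $(**)$ are supplied by Lemma~\ref{phi} and Lemma~\ref{theorem3}, together with the observation that $S$ is the full monoid of dominant weights so that $\Sigma(S)=\Sigma_{B_n}$. Your extra remark on why $2\omega_n$ must nevertheless appear in the generating set is a correct and worthwhile clarification, but the argument is the same as the paper's.
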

 Denote by ($\sharp'$) the inequalities on the coordinates of a signature which define the semigroup $\Sigma_{D_n}$. Let $\sum k'_i\omega_i$ denote the highest weight of a signature for $D_n$. Denote by ($\natural$) the inequalities obtaining from ($\sharp'$) by expressing $k'_i$ via $k_i,s_i$ using the formulas (\ref{defw}).
\begin{proposition}
\label{conj3bn}
The
semigroup $\Sigma_{B_n}$ is given by the inequalities $(\natural)$ and $k_i\geq s_i$, $i=1,\ldots,n$.
\end{proposition}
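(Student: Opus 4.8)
The plan is to reduce the description of $\Sigma_{B_n}$ to the already-available description $(\sharp')$ of $\Sigma_{D_n}$ by transporting it through the homomorphism $\psi$. By Corollary \ref{conj1bn} we have $\Sigma_{B_n}=\Sigma_{B_n}^{(\cdot)}$, and by the induction hypothesis $\Sigma_{D_n}=\Sigma_{D_n}^{(\cdot)}$, so $\psi\colon\Sigma_{B_n}\to\Sigma_{D_n}$ is the surjective homomorphism forgetting the exponents $s_i$ attached to the short roots $\varepsilon_i$. The crucial structural fact is that $\psi$ leaves the long-root exponents untouched and alters only the highest weight, according to the formulas \eqref{defw}. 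Accordingly, I would first record that a signature $\sigma$ of $B_n$ is completely determined by its highest-weight coordinates $(k_1,\ldots,k_n)$, its long-root exponents, and its short-root exponents $(s_1,\ldots,s_n)$; equivalently, by the pair $(\tau,(s_1,\ldots,s_n))$ with $\tau=\psi(\sigma)$, since $\tau$ carries precisely the weight $\sum k_i'\omega_i$ of \eqref{defw} together with the long-root exponents of $\sigma$. The whole proof then amounts to showing that $\sigma\mapsto(\psi(\sigma),(s_i))$ is a bijection of $\Sigma_{B_n}$ onto the set of pairs $(\tau,(s_i))$ with $\tau\in\Sigma_{D_n}$, $s_i\geq0$ and $k_i\geq s_i$, and translating the defining inequalities.

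For the inclusion ``$\subseteq$'', let $\sigma\in\Sigma_{B_n}$. Then $\tau=\psi(\sigma)\in\Sigma_{D_n}$ satisfies $(\sharp')$; since the long-root exponents of $\sigma$ coincide with the exponents of $\tau$ and the weight coordinates $k_i'$ of $\tau$ are given by \eqref{defw}, substituting these formulas into $(\sharp')$ shows by the very definition of $(\natural)$ that $\sigma$ satisfies $(\natural)$. The inequalities $k_i\geq s_i$ hold by the remark preceding Lemma \ref{lift} (a consequence of Lemma \ref{lemma}). Hence every element of $\Sigma_{B_n}$ satisfies $(\natural)$ and $k_i\geq s_i$.

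For the inclusion ``$\supseteq$'', suppose a signature $\sigma$ of $B_n$, with highest weight $\sum k_i\omega_i$, nonnegative short-root exponents $s_i$ and long-root exponents $\overline{p}$, satisfies $(\natural)$ and $k_i\geq s_i$. Define $\tau$ to be the $D_n$-signature with exponents $\overline{p}$ and highest weight $\sum k_i'\omega_i$ computed from \eqref{defw}. Because $\sigma$ satisfies $(\natural)$, the signature $\tau$ satisfies $(\sharp')$, so $\tau\in\Sigma_{D_n}=\Sigma_{D_n}^{(\cdot)}$. Now Lemma \ref{lift}, whose hypothesis $k_i\geq s_i$ is in force, produces a \emph{unique} $\sigma'\in\Sigma_{B_n}^{(\cdot)}$ with highest weight $\sum k_i\omega_i$, with $\psi(\sigma')=\tau$, and with short-root exponents $s_i$. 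This $\sigma'$ has the same highest weight and short-root exponents as $\sigma$, and the same long-root exponents as $\tau$, namely $\overline{p}$; therefore $\sigma'=\sigma$, whence $\sigma\in\Sigma_{B_n}^{(\cdot)}=\Sigma_{B_n}$.

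The substantive content is carried entirely by Lemma \ref{lift} and the fibrewise bijectivity of $\psi$; the present proposition is the bookkeeping step repackaging the inequalities $(\sharp')$ through the linear change of variables \eqref{defw}. I expect the only genuine point requiring care to be the verification that $(\natural)$ together with $k_i\geq s_i$ captures the whole set, i.e.\ that no additional inequalities on the short-root exponents $s_i$ are hidden. This is guaranteed precisely because $\psi$ preserves the long-root exponents intact and the $s_i$ enter $(\sharp')$ solely through the weight substitution \eqref{defw}, so after substitution the $s_i$ are constrained only by $k_i\geq s_i$ (and nonnegativity, which is automatic for a signature).
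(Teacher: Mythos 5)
Your argument is correct and follows essentially the same route as the paper: the forward inclusion via $\psi(\sigma)\in\Sigma_{D_n}$ satisfying $(\sharp')$ together with $k_i\geq s_i$ from Lemma \ref{lemma}, and the reverse inclusion by building $\tau$ from \eqref{defw}, invoking Lemma \ref{lift} to lift it, and identifying the lift with $\sigma$ by uniqueness. No substantive differences.
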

\begin{proof}
Let $\sigma\in\Sigma_{B_n}$ be a signature of highest weight $\sum k_i\omega_i$. Then by Corollary \ref{conj1bn} one has $\sigma\in\Sigma_{B_n}^{(\cdot)}$. Therefore inequalities $k_i\geq s_i$ hold, where $s_i$ are the exponents corresponding to the short roots. The inequalities ($\natural$) hold, because inequalities ($\sharp'$) hold for $\psi(\sigma)$.

Conversely, suppose that the inequalities ($\natural$) and $k_i\geq s_i$ hold for some signature $\sigma$ of highest weight $\sum k_i\omega_i$. Let $\tau$ be the signature of $D_n$ of highest weight $\sum k_i'\omega_i$, where $k_i'$ are defined by the equalities (\ref{defw}), such that $\tau$ and $\sigma$ have the same exponents corresponding to long roots. The signature $\tau$ is essential since the inequalities ($\sharp'$) hold. Then since inequalities $k_i\geq s_i$ hold  one can lift the signature $\tau$ to an essential signature $\sigma'$ of highest weight $\sum k_i\omega_i$ with lifting parameters $s_i$ by Lemma \ref{lift}. Obviously, $\sigma=\sigma'$. Therefore  $\sigma\in\Sigma_{B_n}$.
\end{proof}
\begin{sled}
The semigroup $\Sigma_{B_n}$ is saturated.
\label{theorem4}
\end{sled}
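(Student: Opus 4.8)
The plan is to deduce saturation of $\Sigma_{B_n}$ directly from the explicit inequality description obtained in Proposition~\ref{conj3bn}, using the elementary principle that the lattice points of a convex rational cone always form a saturated semigroup. So the real content has already been established upstream, and this corollary is a short formal consequence.

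First I would record that, by Proposition~\ref{conj3bn}, the semigroup $\Sigma_{B_n}$ consists of exactly those points of $\mathfrak{t}_{\mathbb{Z}}^{*}\oplus\mathbb{Z}^{N}$ whose coordinates satisfy the inequalities $(\natural)$ together with $k_i\geq s_i$ for $i=1,\ldots,n$. The crucial observation is that all of these inequalities are \emph{homogeneous}: the system $(\sharp')$ defining $\Sigma_{D_n}$ is homogeneous by the inductive hypothesis (it defines the rational cone spanned by $\Sigma_{D_n}$), and the substitution (\ref{defw}) expressing the $k_i'$ through the $k_i$ and $s_i$ is linear and homogeneous, so $(\natural)$ stays homogeneous; the inequalities $k_i\geq s_i$ are homogeneous as well. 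Hence the full system cuts out a convex cone $P$ in $(\mathfrak{t}_{\mathbb{Z}}^{*}\oplus\mathbb{Z}^{N})\otimes\mathbb{R}$, and Proposition~\ref{conj3bn} says precisely that $\Sigma_{B_n}=P\cap(\mathfrak{t}_{\mathbb{Z}}^{*}\oplus\mathbb{Z}^{N})$.

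Second, I would invoke the general principle. Since $P$ is a convex cone containing $\Sigma_{B_n}$, it is closed under non-negative rational combinations of its elements, so the rational cone $\Sigma_{\mathbb{Q}}$ spanned by $\Sigma_{B_n}$ satisfies $\Sigma_{\mathbb{Q}}\subseteq P$. Intersecting with the lattice then gives $\Sigma_{\mathbb{Q}}\cap(\mathfrak{t}_{\mathbb{Z}}^{*}\oplus\mathbb{Z}^{N})\subseteq P\cap(\mathfrak{t}_{\mathbb{Z}}^{*}\oplus\mathbb{Z}^{N})=\Sigma_{B_n}$. The reverse inclusion $\Sigma_{B_n}\subseteq\Sigma_{\mathbb{Q}}\cap(\mathfrak{t}_{\mathbb{Z}}^{*}\oplus\mathbb{Z}^{N})$ is immediate from the definition of $\Sigma_{\mathbb{Q}}$. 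Therefore $\Sigma_{B_n}=\Sigma_{\mathbb{Q}}\cap(\mathfrak{t}_{\mathbb{Z}}^{*}\oplus\mathbb{Z}^{N})$, which is exactly the saturation property. Note that I do not even need Minkowski--Weyl or rationality of the defining hyperplanes for this step; convexity of $P$ and the equality $\Sigma_{B_n}=P\cap(\mathfrak{t}_{\mathbb{Z}}^{*}\oplus\mathbb{Z}^{N})$ suffice.

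I do not expect a genuine obstacle at this stage: the substantive work has already been done in Lemma~\ref{lift}, Lemma~\ref{phi}, Corollary~\ref{conj1bn}, and above all Proposition~\ref{conj3bn}, which translates membership in $\Sigma_{B_n}$ into a system of homogeneous linear inequalities. The only point requiring care is to confirm that no inhomogeneous constant is introduced when $(\sharp')$ is rewritten via (\ref{defw}), so that $P$ is a cone rather than a general polyhedron; this is exactly where the inductive hypothesis that $\Sigma_{D_n}$ is saturated is implicitly used, since it guarantees that $(\sharp')$ is a homogeneous system in the first place.
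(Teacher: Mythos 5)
Your argument is correct and is essentially the paper's own (the paper leaves this corollary without an explicit proof precisely because it is the immediate consequence of Proposition~\ref{conj3bn} that you spell out: the homogeneous linear inequalities $(\natural)$ and $k_i\geq s_i$ cut out a convex cone whose lattice points are exactly $\Sigma_{B_n}$, and such a set of lattice points is automatically saturated). No gaps.
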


\subsection {From $D_{n}$ to $D_{n+1}$}
In this subsection we make an induction step in our argument. We assume that we have some numeration of positive roots and some monomial order for $D_{n}$ such that the inductive hypothesis for $D_n$ holds. Here we prove the inductive hypothesis for $D_{n+1}$.

 We have the standard embedding of $D_{n}$ in $D_{n+1}$ such that the following $D_{n}$-module decomposition holds:
$$
V_{D_{n+1}}(\omega_1)=V_{D_{n}}(\omega_1)\oplus \langle e_{n+1}\rangle\oplus\langle e_{-(n+1)}\rangle.
$$
 Since the Cartan subalgebra of $D_{n+1}$ contains the Cartan subalgebra of $D_{n}$ and normalizes $D_{n}$, we can consider the roots of $D_{n}$ as roots of $D_{n+1}$.
 First of all we need to extend the monomial order and the numeration of positive roots from $D_{n}$ to $D_{n+1}$. We extend the numeration as follows:
$$
\textrm{positive roots of $D_{n}$},\varepsilon_1-\varepsilon_{n+1},\ldots,\varepsilon_{n}-\varepsilon_{n+1},\varepsilon_1+\varepsilon_{n+1},\ldots,\varepsilon_{n}+\varepsilon_{n+1}.
$$
We extend the monomial order (on signatures with the same highest weight) as follows:
$$
(\overline\sigma,k_1,\ldots,k_{n+1},l_1,\ldots,l_{n+1})< (\overline\sigma',k_1',\ldots,k_{n+1}',l_1',\ldots,l_{n+1}')
$$
if either $(l_1,\ldots,l_{n+1})<(l_1',\ldots,l_{n+1}')$ in degree lexicographic order, or $(l_1,\ldots,l_{n+1})=(l_1',\ldots,l_{n+1}')$ and $(k_1,\ldots,k_{n+1})<(k_1',\ldots,k_{n+1}')$ in degree lexicographic order, or $(k_1,\ldots,k_{n+1},l_1,\ldots,l_{n+1})=(k_1',\ldots,k_{n+1}',l_1',\ldots,l_{n+1}')$ and $\overline\sigma<\overline\sigma'$ with respect to the monomial order for $D_{n}$.

Now we describe the set of essential signatures of highest weights
$\lambda_i$ for $D_{n+1}$. For any signature $\sigma$ of $D_{n}$ we consider $\overline\sigma$ as the tuple $(\overline\sigma,0,\ldots,0)$ for $D_{n+1}$.
\begin{lemma}
\label{lemma2}
\begin{multline*}
$$
\overline\Sigma_{D_{n+1}}(\omega_p)=\overline\Sigma_{D_{n}}(\widehat\omega_p)\cup(\overline\Sigma_{D_{n}}(\omega_{p-1})+\overline{\varepsilon_p-\varepsilon_{n+1}})
\cup(\overline\Sigma_{D_{n}}(\omega_{p-1})+\overline{\varepsilon_p+\varepsilon_{n+1}})\cup\\
\cup(\overline\Sigma_{D_{n}}(\omega_{p-2})+\overline{\varepsilon_{p-1}-\varepsilon_{n+1}}+\overline{\varepsilon_{p}+\varepsilon_{n+1}}),\quad p<n;
$$
\end{multline*}
$$
\overline\Sigma_{D_{n+1}}(\omega_{n})=\overline\Sigma_{D_{n}}(\omega_{n})\cup(\overline\Sigma_{D_{n}}(\omega_{n-1})+\overline{\varepsilon_{n}-\varepsilon_{n+1}}),
$$
$$
\overline\Sigma_{D_{n+1}}(\omega_{n+1})=\overline\Sigma_{D_{n}}(\omega_{n})\cup(\overline\Sigma_{D_{n}}(\omega_{n-1})+\overline{\varepsilon_{n}+\varepsilon_{n+1}}),
$$
\begin{multline*}
$$
\overline\Sigma_{D_{n+1}}(2\omega_{n})=\overline\Sigma_{D_{n}}(2\omega_{n})\cup(\overline\Sigma_{D_{n}}(\widehat\omega_{n-1})+\overline{\varepsilon_{n}-\varepsilon_{n+1}})
\cup\\
\cup(\overline\Sigma_{D_{n}}(2\omega_{n-1})+2(\overline{\varepsilon_{n}-\varepsilon_{n+1}})),
$$
\end{multline*}
\begin{multline*}
$$
\overline\Sigma_{D_{n+1}}(2\omega_{n+1})=\overline\Sigma_{D_{n}}(2\omega_{n})\cup(\overline\Sigma_{D_{n}}(\widehat\omega_{n-1})+\overline{\varepsilon_{n}+\varepsilon_{n+1}})
\cup\\
\cup(\overline\Sigma_{D_{n}}(2\omega_{n-1})+2(\overline{\varepsilon_{n}+\varepsilon_{n+1}})),
$$
\end{multline*}
\begin{multline*}
$$
\overline\Sigma_{D_{n+1}}(\widehat\omega_{n})=\overline\Sigma_{D_{n}}(2\omega_{n})\cup(\overline\Sigma_{D_{n}}(2\omega_{n-1})+\overline{\varepsilon_{n}-\varepsilon_{n+1}}+\overline{\varepsilon_{n}+\varepsilon_{n+1}})\cup\\
\cup(\overline\Sigma_{D_{n}}(\widehat\omega_{n-1})+\overline{\varepsilon_{n}-\varepsilon_{n+1}})\cup(\overline\Sigma_{D_{n}}(\widehat\omega_{n-1})+\overline{\varepsilon_{n}+\varepsilon_{n+1}})\cup\\
\cup(\overline\Sigma_{D_{n}}(\omega_{n-2})+\overline{\varepsilon_{n-1}-\varepsilon_{n+1}}+\overline{\varepsilon_{n}+\varepsilon_{n+1}}),
$$
\end{multline*}
\end{lemma}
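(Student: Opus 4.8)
The plan is to follow the proof of Lemma~\ref{lemma} in spirit, replacing the one-dimensional complement $\langle e_0\rangle$ used there by the two-dimensional complement $\langle e_{n+1}\rangle\oplus\langle e_{-(n+1)}\rangle$ coming from $\mathbb{C}^{2(n+1)}=\mathbb{C}^{2n}\oplus\langle e_{n+1}\rangle\oplus\langle e_{-(n+1)}\rangle$. First, for each highest weight $\mu$ in the list I would write $V_{D_{n+1}}(\mu)$ as a direct sum of $D_n$-modules. For $\mu=\omega_p$ ($p<n$) and $\mu=\widehat\omega_n$ this is the decomposition
$$
{\bigwedge}^{k}\mathbb{C}^{2(n+1)}={\bigwedge}^{k}\mathbb{C}^{2n}\oplus e_{n+1}\wedge{\bigwedge}^{k-1}\mathbb{C}^{2n}\oplus e_{-(n+1)}\wedge{\bigwedge}^{k-1}\mathbb{C}^{2n}\oplus e_{n+1}\wedge e_{-(n+1)}\wedge{\bigwedge}^{k-2}\mathbb{C}^{2n},
$$
combined with ${\bigwedge}^{j}\mathbb{C}^{2n}=V_{D_n}(\widehat\omega_j)$ for $j<n$ and ${\bigwedge}^{n}\mathbb{C}^{2n}=V_{D_n}(2\omega_{n-1})\oplus V_{D_n}(2\omega_n)$; for the spinor weights $\omega_n,\omega_{n+1}$ I would use the restriction $V_{D_{n+1}}(\omega_n)|_{D_n}=V_{D_n}(\omega_{n-1})\oplus V_{D_n}(\omega_n)$ of the half-spinor module, and for the self-dual weights $2\omega_n,2\omega_{n+1}$ the analogous splitting of ${\bigwedge}^{n+1}\mathbb{C}^{2(n+1)}=V_{D_{n+1}}(2\omega_n)\oplus V_{D_{n+1}}(2\omega_{n+1})$. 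In every case each summand is an irreducible $V_{D_n}(\nu)$ carried by a fixed wedge factor built from $e_{\pm(n+1)}$, and these summands are precisely the sets $\overline\Sigma_{D_n}(\nu)$ shifted by the exponents of the new roots $\varepsilon_i\pm\varepsilon_{n+1}$ appearing on the right-hand side of the lemma.

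Second, I would establish the following leading-term fact. The key observation is that every signature in a single piece has the \emph{same} exponents of the new roots (only its $D_n$-part $\tau$ varies), so by the extension of the monomial order fixed before the lemma the pieces are totally ordered by the two degree-lexicographic levels coming from the new roots, while inside a piece signatures are ordered by the $D_n$-order on $\overline\tau$. A direct computation with the lowering operators---the plus-root operators $e_{-(\varepsilon_i+\varepsilon_{n+1})}$ and minus-root operators $e_{-(\varepsilon_i-\varepsilon_{n+1})}$ create and destroy the vectors $e_{\pm(n+1)}$, while the $D_n$-operators act only on the $\bigwedge\mathbb{C}^{2n}$ factor---then shows that for $\sigma$ in the piece attached to $V_{D_n}(\nu)$ the vector $v(\sigma)$ equals $v(\tau)\wedge(\text{the wedge factor})$ modulo the summands belonging to strictly smaller pieces, so that projection onto the leading summand sends $v(\sigma)$ to the $D_n$-signature vector $v(\tau)$.

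Third, I would deduce essentiality and exhaustion. By the induction hypothesis $\overline\Sigma_{D_n}(\nu)$ is exactly the essential set for $D_n$; hence if $\tau'<\sigma$ lies in a strictly smaller piece then $v(\tau')$ has no component in the leading summand of $\sigma$, and if $\tau'<\sigma$ lies in the same piece then its leading projection is $v$ of a strictly smaller $D_n$-signature, which cannot equal $v(\tau)$. Thus every listed signature is essential. The listed pieces are disjoint (they have distinct new-root exponents), so their total number is $\sum_\nu\dim V_{D_n}(\nu)=\dim V_{D_{n+1}}(\mu)$; since essential signatures always form a basis, the listed signatures are \emph{all} the essential ones, which is the assertion of the lemma.

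The step I expect to be the main obstacle is the leading-term computation of the second part in the presence of \emph{contamination}: a vector $v(\sigma)$ need not lie entirely in its own summand. This already happens for the $(1,1)$-type pieces of $\omega_p$, and is most serious for $\widehat\omega_n,2\omega_n,2\omega_{n+1}$, where ${\bigwedge}^{n}\mathbb{C}^{2n}$ splits as $V_{D_n}(2\omega_{n-1})\oplus V_{D_n}(2\omega_n)$ and, for the self-dual weights, the highest weight vector itself already involves $e_{\pm(n+1)}$ (so exponents equal to $2$ occur). Consequently several distinct pieces can land in the same $\varepsilon_{n+1}$-weight space and can be separated only by the finer monomial order, not by the torus weight. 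The whole argument rests on checking that all contamination is directed to strictly smaller pieces---equivalently, that the extension of the order and of the numeration of roots is compatible with this $D_n$-module filtration; once this bookkeeping is done, the two-summand cases $\omega_n,\omega_{n+1}$ reduce to Lemma~\ref{lemma} without change.
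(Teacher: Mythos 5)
Your proposal is correct and follows essentially the same route as the paper: the same $D_n$-module decompositions of each $V_{D_{n+1}}(\mu)$ via $\mathbb{C}^{2(n+1)}=\mathbb{C}^{2n}\oplus\langle e_{n+1}\rangle\oplus\langle e_{-(n+1)}\rangle$, essentiality of each piece by minimality with respect to the extended monomial order modulo the preceding summands, and a dimension count to show the list is exhaustive. The ``contamination'' issue you flag for the self-dual weights is exactly the point the paper treats by the explicit lowering-operator computation (e.g.\ $e_{-(\varepsilon_n+\varepsilon_{n+1})}\cdot e_1\wedge\ldots\wedge e_{n+1}=e_1\wedge\ldots\wedge e_{n-1}\wedge e_{-n}\wedge e_n+v_{\widehat\omega_{n-1}}\wedge e_{-(n+1)}\wedge e_{n+1}$), working modulo $\bigwedge^{n}\mathbb{C}^{2n}$ and then comparing the count with $\dim V_{B_n}(2\omega_n)=\dim V_{D_{n+1}}(2\omega_{n+1})$, so your plan matches the paper's argument in substance.
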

$$\textrm{where}\quad\overline\Sigma_{D_{n}}(\omega_0)=\overline\Sigma_{D_{n}}(\omega_{-1}):=\{(0,\ldots,0)\},$$
$$\overline{\varepsilon_0-\varepsilon_{n+1}}:=(0,\ldots,0).$$
\begin{proof}
For any $p<n$ one has a $D_{n}$-module decomposition:
\begin{multline*}
$$
V_{D_{n+1}}(\omega_p)=V_{D_{n}}(\widehat\omega_p)\oplus
   e_{n+1}\wedge V_{D_{n}}(\omega_{p-1})\oplus\\  \oplus e_{-(n+1)}\wedge V_{D_{n}}(\omega_{p-1})\oplus e_{n+1}\wedge e_{-(n+1)}\wedge V_{D_{n}}(\omega_{p-2}),
$$
\end{multline*}
where the first summand is spanned by the vectors $v(\sigma), \overline\sigma\in\overline\Sigma_{D_{n}}(\omega_p)$, the second and third summands are spanned by $v(\sigma),\overline\sigma\in\overline\Sigma_{D_{n}}(\omega_{p-1})+\overline{\varepsilon_p\mp\varepsilon_{n+1}}$, respectively, and the last summand is spanned modulo the previous summands by the vectors $v(\sigma),\overline\sigma\in\overline\Sigma_{D_{n}}(\omega_{p-2})+\overline{\varepsilon_{p-1}-\varepsilon_{n+1}}+\overline{\varepsilon_{p}+\varepsilon_{n+1}}$. One can prove that all these signatures $\sigma$ are essential with respect to the extended monomial order by using the arguments similar to those in Lemma~\ref{lemma}. Therefore the first equality holds.

 The next two equalities are similar to the following equality in Lemma~\ref{lemma}:
 $$
 \overline\Sigma_{B_{n}}(\omega_{n})=\overline\Sigma_{D_{n}}(\omega_{n})\cup(\overline\Sigma_{D_{n}}(\omega_{n-1})+\overline{\varepsilon}_{n}),
 $$
 We prove them by using the same arguments as in Lemma \ref{lemma} replacing $\varepsilon_{n}$ by $\varepsilon_{n}\pm \varepsilon_{n+1}$.

 Now we prove the equality
 \begin{multline*}
$$
\overline\Sigma_{D_{n+1}}(2\omega_{n+1})=\overline\Sigma_{D_{n}}(2\omega_{n})\cup(\overline\Sigma_{D_{n}}(\widehat\omega_{n-1})+\overline{\varepsilon_{n}+\varepsilon_{n+1}})
\cup\\
\cup(\overline\Sigma_{D_{n}}(2\omega_{n-1})+2(\overline{\varepsilon_{n}+\varepsilon_{n+1}})).
$$
\end{multline*}
 The highest weight vector of $V_{D_n}(2\omega_{n})$ is $v_{2\omega_n}=e_1\wedge\ldots\wedge e_n$ and the highest weight vector of $V_{D_{n+1}}(2\omega_{n+1})$ is $v_{2\omega_n}\wedge e_{n+1}$. The signatures $\sigma$ with $\overline\sigma\in\overline\Sigma_{D_{n}}(2\omega_{n})$ have zero exponents corresponding to roots $\varepsilon_i\pm\varepsilon_{n+1}$, hence they are minimal among signatures for which $v(\sigma)$ span $V_{D_n}(2\omega_{n})\wedge e_{n+1}$.  Therefore they are essential for the extended monomial order.

 The signatures $\sigma$ with exponent 1 corresponding to the root $\varepsilon_{n}+\varepsilon_{n+1}$ are minimal signatures of the weights $\sum_{i=1}^{n}k_i\varepsilon_i+0\cdot\varepsilon_{n+1}$,  $k_i\in\{-1,0,1\}$. After applying the lowering operator corresponding to the root $\varepsilon_{n}+\varepsilon_{n+1}$ to the highest weight vector $e_1\wedge\ldots\wedge e_{n+1}$ one obtains the vector $e_1\wedge\ldots\wedge e_{n-1}\wedge e_{-n}\wedge e_{n}+e_1\wedge\ldots\wedge e_{n-1}\wedge e_{-(n+1)}\wedge e_{n+1}$. The second summand has the form $v_{\widehat\omega_{n-1}}\wedge e_{-(n+1)}\wedge e_{n+1}$, where $v_{\widehat\omega_{n-1}}$ is the highest weight vector for $V_{D_n}(\widehat\omega_{n-1})$. The vectors $v(\sigma)$ with $\overline\sigma\in\overline\Sigma_{D_{n}}(\widehat\omega_{n-1})+\overline{\varepsilon_{n}+\varepsilon_{n+1}}$ span $V_{D_n}(\widehat\omega_{n-1})\wedge e_{-(n+1)}\wedge e_{n+1}$ modulo $\bigwedge^{n}\mathbb{C}^{2n}$, where $\mathbb{C}^{2n}=\langle e_{\pm1},\ldots,e_{\pm n} \rangle$. Hence the corresponding signatures are essential.

 Similarly, the signatures $\sigma$ with exponent 2 corresponding to the root $\varepsilon_{n}+\varepsilon_{n+1}$ are minimal signatures of the weights $\sum_{i=1}^{n}k_i\varepsilon_i-\varepsilon_{n+1}$, $k_i\in\{-1,0,1\}$. After applying two lowering operators corresponding to the root $\varepsilon_{n}+\varepsilon_{n+1}$ to the highest weight vector $e_1\wedge\ldots\wedge e_{n+1}$ one obtains the vector $e_1\wedge\ldots\wedge e_{n-1}\wedge e_{-n}\wedge e_{-(n+1)}=v_{2\omega_{n-1}}\wedge e_{-(n+1)}$, where $v_{2\omega_{n-1}}$ is the highest weight vector of $V_{D_n}(2\omega_{n-1})$. The vectors $v(\sigma)$ with $\overline\sigma\in\overline\Sigma_{D_{n}}(2\omega_{n-1})+2(\overline{\varepsilon_{n}+\varepsilon_{n+1}})$ span $V_{D_n}(2\omega_{n-1})\wedge e_{-(n+1)}$. Hence the corresponding signatures
 are essential.

 Thus all signatures on the right-hand side are essential. By the second equality from Lemma \ref{lemma} the number of these signatures equals $\dim V_{B_n}(2\omega_n)$.
 Therefore, since $\dim V_{B_n}(2\omega_n)=\dim V_{D_{n+1}}(2\omega_{n+1})$ we have found all essential signatures of highest weight $2\omega_{n+1}$. The proof for $\overline\Sigma_{D_{n+1}}(2\omega_n)$ is similar.

The proof of the last equality is similar to the one for the first equality. One has the  $D_n$-module decomposition
 \begin{multline*}
 $$
 V_{D_{n+1}}(\widehat\omega_{n})=V_{D_{n}}(2\omega_{n})\oplus V_{D_{n}}(2\omega_{n-1}) \oplus e_{n+1}\wedge V_{D_{n}}(\widehat\omega_{n-1})\oplus \\\oplus e_{-(n+1)}\wedge V_{D_{n}}(\widehat\omega_{n-1})\oplus e_{n+1}\wedge e_{-(n+1)}\wedge V_{D_{n}}(\omega_{n-2})
 $$
 \end{multline*}
 similar to one for the first equality, where the first summand in the right-hand side  $V_{D_{n}}(\widehat\omega_{p})={\bigwedge}^p\mathbb{C}^{2n}$ is replaced by $V_{D_{n}}(2\omega_{n})\oplus V_{D_{n}}(2\omega_{n-1})={\bigwedge}^{n}\mathbb{C}^{2n}$.
\end{proof}

\begin{lemma}
The property $(\dagger)$ holds for $D_{n+1}$.
\end{lemma}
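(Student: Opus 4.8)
The plan is to establish the set equality $\Sigma'_{D_{n+1}}\cup\Sigma''_{D_{n+1}}=\Sigma_{D_{n+1}}^{(\cdot)}$ by proving its two inclusions. The inclusion $\subseteq$ is immediate, since each generating weight of $\Sigma'_{D_{n+1}}$ and of $\Sigma''_{D_{n+1}}$ lies in the generating set $\{\omega_1,\dots,\omega_{n+1},2\omega_n,2\omega_{n+1},\widehat\omega_n\}$ of $\Sigma_{D_{n+1}}^{(\cdot)}$, where $\widehat\omega_n=\omega_n+\omega_{n+1}$. The substance is the reverse inclusion: I must show that any $\sigma\in\Sigma_{D_{n+1}}^{(\cdot)}$ admits a decomposition into essential signatures of weights in $\{\omega_1,\dots,\omega_{n-1},\omega_n,2\omega_n,\widehat\omega_n\}$ (so that $\sigma\in\Sigma'_{D_{n+1}}$) or in $\{\omega_1,\dots,\omega_{n-1},\omega_{n+1},2\omega_{n+1},\widehat\omega_n\}$ (so that $\sigma\in\Sigma''_{D_{n+1}}$). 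Here $\widehat\omega_n$ is common to both families, $\omega_n,2\omega_n$ are exclusive to $\Sigma'_{D_{n+1}}$, and $\omega_{n+1},2\omega_{n+1}$ are exclusive to $\Sigma''_{D_{n+1}}$.

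The main tool will be four \emph{merging} inclusions for $D_{n+1}$, parallel to those used in the proof of Lemma~\ref{theorem3}:
\begin{align*}
\text{(a)}\quad&\Sigma_{D_{n+1}}(\omega_n)+\Sigma_{D_{n+1}}(\omega_{n+1})\subseteq\Sigma_{D_{n+1}}(\widehat\omega_n),\\
\text{(b)}\quad&\Sigma_{D_{n+1}}(2\omega_n)+\Sigma_{D_{n+1}}(2\omega_{n+1})\subseteq\Sigma_{D_{n+1}}(\widehat\omega_n)+\Sigma_{D_{n+1}}(\widehat\omega_n),\\
\text{(c)}\quad&\Sigma_{D_{n+1}}(2\omega_n)+\Sigma_{D_{n+1}}(\omega_{n+1})\subseteq\Sigma_{D_{n+1}}(\widehat\omega_n)+\Sigma_{D_{n+1}}(\omega_n),\\
\text{(d)}\quad&\Sigma_{D_{n+1}}(\omega_n)+\Sigma_{D_{n+1}}(2\omega_{n+1})\subseteq\Sigma_{D_{n+1}}(\widehat\omega_n)+\Sigma_{D_{n+1}}(\omega_{n+1}).
\end{align*}
Inclusion (a) is immediate from Proposition~\ref{1}: the sum of two essential signatures is essential, and its weight is $\omega_n+\omega_{n+1}=\widehat\omega_n$. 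For (b)--(d) the left-hand sum is again essential (Proposition~\ref{1}), of weight $2\widehat\omega_n$, $\widehat\omega_n+\omega_n$, $\widehat\omega_n+\omega_{n+1}$ respectively, so it remains to decompose such an essential signature into the prescribed pair. I would do this on exponent tuples by substituting the explicit descriptions of Lemma~\ref{lemma2} and matching the pieces termwise. Every match reduces to a decomposition fact for $D_n$, such as $\Sigma_{D_n}(2\widehat\omega_{n-1})\subseteq\Sigma_{D_n}(\widehat\omega_{n-1})+\Sigma_{D_n}(\widehat\omega_{n-1})$ or $\Sigma_{D_n}(\widehat\omega_{n-1}+\omega_n)\subseteq\Sigma_{D_n}(\widehat\omega_{n-1})+\Sigma_{D_n}(\omega_n)$, which hold by the induction hypothesis for $D_n$: the target weight lies in $S'$ or $S''$, so its essential signatures decompose into $\Sigma'_{D_n}$- or $\Sigma''_{D_n}$-generators (properties $(*),(**)$, together with $(\dagger)$ when $k_{n-1}=k_n$), and the value of the highest weight forces exactly the two claimed generators to occur.

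With (a)--(d) in hand I would conclude as follows. Fix $\sigma\in\Sigma_{D_{n+1}}^{(\cdot)}$ of highest weight $\sum k_i\omega_i$ and a decomposition into essential signatures of generating weights; let $a,b,c,d,e$ be the numbers of summands of weights $\omega_n,2\omega_n,\omega_{n+1},2\omega_{n+1},\widehat\omega_n$, so that $k_n=a+2b+e$ and $k_{n+1}=c+2d+e$. Assume $k_n\ge k_{n+1}$, i.e. $a+2b\ge c+2d$ (the case $k_{n+1}\ge k_n$ is symmetric and yields $\sigma\in\Sigma''_{D_{n+1}}$). While $c+2d>0$ I apply one of the merging inclusions to a suitable pair of summands: (a) to $\{\omega_n,\omega_{n+1}\}$ or (c) to $\{2\omega_n,\omega_{n+1}\}$ if $c>0$, and (b) to $\{2\omega_n,2\omega_{n+1}\}$ or (d) to $\{\omega_n,2\omega_{n+1}\}$ if $c=0<d$; the bound $a+2b\ge c+2d>0$ guarantees a summand of the required weight $\omega_n$ or $2\omega_n$ is present. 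A direct check shows that each operation decreases $a+2b$ and $c+2d$ by the same amount, so $a+2b\ge c+2d$ is preserved while $c+2d$ strictly decreases. The process terminates with $c=d=0$, that is, with a decomposition of $\sigma$ using only $\omega_1,\dots,\omega_{n-1},\omega_n,2\omega_n,\widehat\omega_n$; hence $\sigma\in\Sigma'_{D_{n+1}}$.

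The hardest part is the verification of the merging inclusions (b)--(d), i.e. the exponent-level bookkeeping against Lemma~\ref{lemma2}, where one must correctly route the new-root exponents $\overline{\varepsilon_n\pm\varepsilon_{n+1}}$ into the five pieces of $\overline\Sigma_{D_{n+1}}(\widehat\omega_n)$ and invoke the appropriate $D_n$-level decomposition facts. The concluding elimination argument is then routine, its only point being the conservation of $a+2b-(c+2d)$ under each of the four operations.
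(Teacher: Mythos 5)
Your proposal is correct and follows essentially the same route as the paper: the paper likewise reduces $(\dagger)$ to showing that any sum of two essential signatures with highest weights in $\{\omega_n,\omega_{n+1},2\omega_n,2\omega_{n+1},\widehat\omega_n\}$ lands in $\Sigma'_{D_{n+1}}$ or $\Sigma''_{D_{n+1}}$, and verifies the mixed pairs (your (a)--(d)) by matching the pieces of Lemma~\ref{lemma2} against those of $\overline\Sigma_{D_{n+1}}(\widehat\omega_n)$ and invoking $(\dagger)$ for $D_n$, working out the $\{2\omega_n,2\omega_{n+1}\}$ case explicitly and declaring the rest similar. Your explicit termination argument via the invariant $a+2b-(c+2d)$ spells out a step the paper leaves implicit, while your deferral of the exponent-level case check is at the same level of detail as the paper's ``the other cases are similar.''
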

\begin{proof}
It is enough to show that any sum of two essential signatures of highest weights in $\{\omega_n,\omega_{n+1},2\omega_{n},2\omega_{n+1},\widehat\omega_n\}$ belongs to $\Sigma'_{D_{n+1}}$ or to $\Sigma''_{D_{n+1}}$.

Let, for example, $\sigma\in\Sigma_{D_{n+1}}(2\omega_n)$ and $\sigma'\in\Sigma_{D_{n+1}}(2\omega_{n+1})$. Then by the previous Lemma one has:
$$\overline\sigma\in\overline\Sigma_{D_n}(2\omega_n),\;\textrm{or}\; \overline\sigma\in\overline\Sigma_{D_n}(\widehat\omega_{n-1})+\overline{\varepsilon_n-\varepsilon_{n+1}},
\
\; \textrm{or}\;  \overline\sigma\in\overline\Sigma_{D_n}(2\omega_{n-1})+2(\overline{\varepsilon_n-\varepsilon_{n+1}})$$ and $$\overline\sigma'\in\overline\Sigma_{D_n}(2\omega_n),\;\textrm{or}\; \overline\sigma'\in\overline\Sigma_{D_n}(\widehat\omega_{n-1})+\overline{\varepsilon_n+\varepsilon_{n+1}},\;\textrm{or}\; \overline\sigma'\in\overline\Sigma_{D_n}(2\omega_{n-1})+2(\overline{\varepsilon_n+\varepsilon_{n+1}}).$$
The property ($\dagger$) for $D_n$ and description of $\overline\Sigma_{D_{n+1}}(\widehat\omega_n)$ imply that in any case $\sigma+\sigma'\in\Sigma_{D_{n+1}}(\widehat\omega_n)+\Sigma_{D_{n+1}}(\widehat\omega_n)$. Hence $\sigma+\sigma'$ belongs to both $\Sigma'_{D_{n+1}}$ and $\Sigma''_{D_{n+1}}$.

One can easily verify that the other cases are similar to the one in this example.

\end{proof}
By Lemma \ref{lemma2} we have the surjective maps (forgetting exponents corresponding to roots $\varepsilon_i\pm\varepsilon_{n+1}$):

$$
\psi:\Sigma_{D_{n+1}}(\omega_p)\rightarrow\Sigma_{D_{n}}(\widehat\omega_p)
\sqcup\Sigma_{D_{n}}(\omega_{p-1})\sqcup\Sigma_{D_{n}}(\omega_{p-2}),\quad p<n,
$$
$$
\psi:\Sigma_{D_{n+1}}(\omega_{n})\rightarrow\Sigma_{D_{n}}(\omega_{n})\sqcup\Sigma_{D_{n}}(\omega_{n-1}),
$$
$$
\psi:\Sigma_{D_{n+1}}(\omega_{n+1})\rightarrow\Sigma_{D_{n}}(\omega_{n})\sqcup\Sigma_{D_{n}}(\omega_{n-1}),
$$
$$
\psi:\Sigma_{D_{n+1}}(2\omega_{n})\rightarrow\Sigma_{D_{n}}(\widehat\omega_{n-1})
\sqcup\Sigma_{D_{n}}(2\omega_{n-1})\sqcup\Sigma_{D_{n}}(2\omega_{n}),
$$
$$
\psi:\Sigma_{D_{n+1}}(2\omega_{n+1})\rightarrow\Sigma_{D_{n}}(\widehat\omega_{n-1})\sqcup
\Sigma_{D_{n}}(2\omega_{n-1})\sqcup\Sigma_{D_{n}}(2\omega_{n}),
$$
$$
\psi:\Sigma_{D_{n+1}}(\widehat\omega_{n})\rightarrow\Sigma_{D_{n}}(2\omega_{n})\sqcup\Sigma_{D_{n}}(2\omega_{n-1})
\sqcup\Sigma_{D_{n}}(\widehat\omega_{n-1})
\sqcup\Sigma_{D_{n}}(\omega_{n-2}).
$$

As in 4.2 we can extend these maps to a homomorphism of semigroups $\Sigma_{D_{n+1}}^{(\cdot)}\rightarrow\Sigma_{D_{n}}^{(\cdot)}$. Indeed, assume that we have two decompositions of some signature $\sigma$ of highest weight $\lambda=\sum k_i\omega_i$ in $\Sigma_{D_{n+1}}^{(\cdot)}$:
$$
\sigma_1+\ldots+\sigma_k=\sigma'_1+\ldots+\sigma'_l,
$$
where $\sigma_i$ and $\sigma'_j$ are essential signatures for $D_{n+1}$ of highest weights in $$\{\omega_1,\ldots,\omega_{n+1},2\omega_{n},2\omega_{n+1},\widehat\omega_{n}\}.$$
 Then we can apply the map $\psi$ to these decompositions, and obtain two signatures in $\Sigma_{D_{n}}^{(\cdot)}$:
$$
\psi(\sigma_1)+\ldots+\psi(\sigma_k)\quad
\textrm{and}\quad\psi(\sigma'_1)+\ldots+\psi(\sigma'_l).
$$
We claim that these two signatures coincide. Obviously, these two signatures have the same exponents, hence we have to verify that the highest weights of these signatures coincide.
Let $s^{\pm}_i$ be the exponents of $\sigma$, corresponding to roots $\varepsilon_i\pm\varepsilon_{n+1}$ respectively.
Notice, that if $\sum^{n+1}_{i=1} k_i\omega_i=\sum^{n+1}_{i=1} l_i\varepsilon_i$ then the highest weight of both signatures in $\Sigma_{D_{n}}^{(\cdot)}$ is $\sum^{n}_{i=1} k'_i\omega_i=\sum^{n}_{i=1} l'_i\varepsilon_i$, where
$$
l'_i=l_i-s^{-}_i-s^{+}_i,\quad i\leq n.
$$
It is easy to deduce that
\begin{equation}
\begin{split}
\label{defw2}
k'_i=k_i-s^{-}_i-s^{+}_i+s^{-}_{i+1}+s^{+}_{i+1},\quad i<n,\\
k_{n}'=k_{n-1}+k_{n}+k_{n+1}-s^{-}_{n}-s^{+}_{n}-s^{-}_{n-1}-s^{+}_{n-1}.
\end{split}
\end{equation}

Hence
$$
\psi(\sigma_1)+\ldots+\psi(\sigma_k)=\psi(\sigma'_1)+\ldots+\psi(\sigma'_l).
$$

Therefore we have well-defined surjective homomorphism of semigroups:
$$
\psi:\Sigma_{D_{n+1}}^{(\cdot)}\rightarrow\Sigma_{D_{n}}^{(\cdot)}.
$$

Notice, that by Lemma \ref{lemma2} for $\{k_i\},\{k'_i\},\{s^{-}_i\},\{s^{+}_i\}$ the following inequalities hold:
\begin{equation}
\label{ineq}
\begin{split}
s^{+}_i&\leq k_i,\quad i<n,\\
s^{-}_i+s^{+}_{i}-s^{+}_{i+1}&\leq k_i,\quad i<n\\
s^{+}_{n-1}+s^{-}_{n-1}+s^{-}_{n}&\leq k_{n-1}+k_{n},\\
s^{-}_{n}&\leq k_{n},\\
s^{+}_{n}&\leq k_{n+1}.
\end{split}
\end{equation}
These inequalities together with $s^{\pm}_i,k_i\geq0$ define a cone $C\subset\mathbb{R}^{3n+1}$. Consider the following lattice vectors in this cone given below by the values of their nonzero coordinates:

\begin{equation}
\label{vectors}
\begin{split}
 1.&\quad k_i=1\quad(n+1\quad\textrm{vectors for}\quad i=1,\ldots,n+1),\\
 2.&\quad k_i=s^{+}_i=1\quad(n-1\quad\textrm{vectors for}\quad i=1,\ldots,n-1),\\
 3.&\quad k_i=s^{-}_i=1\quad(n\quad\textrm{vectors for}\quad i=1,\ldots,n),\\
 4.&\quad k_i=s^{+}_i=s^{-}_{i-1}=1\quad(n-2\quad\textrm{vectors for}\quad i=2,\ldots,n-1),\\
 5.&\quad k_{n+1}=s^{+}_{n}=1,\\
 6.&\quad k_{n+1}=k_{n}=s^{+}_{n}=s^{-}_{n-1}=1.
 \end{split}
\end{equation}
\begin{lemma}
The vectors $(\ref{vectors})$ generate the semigroup of integer points in the cone $C$.
\end{lemma}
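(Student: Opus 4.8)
The plan is to prove the statement in two stages: first, that each of the vectors in $(\ref{vectors})$ lies in $C$; second, that every lattice point of $C$ is a nonnegative integer combination of them. The first stage is a direct substitution of each family $1$--$6$ into the inequalities $(\ref{ineq})$ together with the nonnegativity conditions, and presents no difficulty. For the second stage I set up a descent (``peeling'') algorithm: starting from an arbitrary $x=(k_i;s^-_i,s^+_i)\in C\cap\mathbb{Z}^{3n+1}$, I subtract one generator at a time, keeping the difference inside $C\cap\mathbb{Z}^{3n+1}$ and strictly decreasing the monovariant $\sum_i(s^+_i+s^-_i)$ until all $s$-coordinates vanish, after which the remaining point has nonnegative $k_i$ and is visibly a sum of type~$1$ vectors.

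The descent proceeds in phases. In Phase~$1$ I eliminate the coordinates $s^+_i$. To lower $s^+_i$ for $i<n$ I use a type~$2$ vector; but if $i\ge2$ and the inequality $s^-_{i-1}+s^+_{i-1}-s^+_i\le k_{i-1}$ from $(\ref{ineq})$ is tight, I use the coupled type~$4$ vector instead, which simultaneously lowers $s^-_{i-1}$ and so leaves the left-hand side of that inequality unchanged. To lower $s^+_n$ I use type~$5$, or type~$6$ if $s^-_{n-1}+s^+_{n-1}-s^+_n\le k_{n-1}$ is tight. In each tight case one checks from $s^+_{i-1}\le k_{i-1}$ (resp.\ $s^+_{n-1}\le k_{n-1}$) that $s^-_{i-1}\ge1$ (resp.\ $s^-_{n-1}\ge1$), so the required coupled generator can really be subtracted. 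Once all $s^+_i=0$, in Phase~$2$ I lower each $s^-_i$ by a type~$3$ vector; here $s^-_i\le k_i$ (for $i<n$, which is $(\ref{ineq})$ with $s^+\equiv0$) and $s^-_n\le k_n$ guarantee $k_i\ge1$, so the step is legal. In Phase~$3$, where all $s$-coordinates vanish, every inequality of $(\ref{ineq})$ reduces to $0\le(\text{nonnegative})$ and the point is just $\sum_i k_i$ copies of the appropriate type~$1$ vectors.

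The verification that $x-g$ remains in $C$ is mostly automatic: in every inequality of $(\ref{ineq})$ except those of the form $s^-_{i-1}+s^+_{i-1}-s^+_i\le k_{i-1}$, the subtracted generator lowers a left-hand term and a right-hand term in tandem, so the inequality is preserved. The only left-hand sides that can \emph{increase} under a reduction are those of the inequalities $s^-_{i-1}+s^+_{i-1}-s^+_i\le k_{i-1}$, and precisely when $s^+_i$ is lowered; this is exactly why the coupled generators of types~$4$ and~$6$ are included, and the tightness dichotomy above guarantees that whenever such an increase would break the constraint, the coupled generator is available to compensate.

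The main obstacle is the analysis of the type~$6$ step, since it is the only reduction that lowers $k_n$, which appears on the right-hand side of both $s^+_{n-1}+s^-_{n-1}+s^-_n\le k_{n-1}+k_n$ and $s^-_n\le k_n$. The decisive computation is that when $s^-_{n-1}+s^+_{n-1}-s^+_n\le k_{n-1}$ is tight and $s^+_n\ge1$, substituting this equality into $s^+_{n-1}+s^-_{n-1}+s^-_n\le k_{n-1}+k_n$ yields $s^-_n+s^+_n\le k_n$; this simultaneously gives $k_n\ge1$ (so type~$6$ can be subtracted) and $s^-_n\le k_n-1$ (so $s^-_n\le k_n$ survives the drop of $k_n$). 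Granting this, each phase terminates after finitely many steps because the monovariant strictly decreases, and the algorithm exhibits $x$ as a nonnegative integer combination of the vectors $(\ref{vectors})$, completing the proof.
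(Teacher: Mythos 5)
Your proof is correct and follows essentially the same strategy as the paper's: a greedy descent that subtracts one generator at a time, switching to the coupled generators of types $4$ and $6$ exactly when the inequality $s^-_{i-1}+s^+_{i-1}-s^+_i\le k_{i-1}$ becomes tight, and resting on the same two key computations (that $s^-_{i-1}\ge s^+_i\ge 1$ in the tight case, and that combining the tight inequality with the third inequality of $(\ref{ineq})$ yields $s^-_n+s^+_n\le k_n$). The only difference is the order of elimination --- you clear all the $s^+_i$, then all the $s^-_i$, then the $k_i$, whereas the paper interleaves these reductions index by index starting from $i=n$ --- and both orders go through.
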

\begin{proof}
We start with an arbitrary integer point in the cone. So we have tuples of non-negative integers $\{k_i\},\{s^{\pm}_{i}\}$, satisfying the inequalities (\ref{ineq}). We show how to subtract one of the vectors (\ref{vectors}) to get an integer point in the cone again. We provide a step by step algorithm below. In every step one can easily check that inequalities (\ref{ineq}) are preserved after subtraction.

\begin{enumerate}
\item Subtract the vector 3 with $i=n$ until $s^{-}_{n}=0$.
\item Subtract the vector 5 until
\item[] $s^{+}_{n}=0$
\item[] or
\item[] $s^{-}_{n-1}+s^{+}_{n-1}-s^{+}_{n}=k_{n-1}$. In the latter case, the fourth inequality is follows from the third one. Moreover, one has $s^{+}_{n}\leq s^{-}_{n-1}$, because otherwise $s^{+}_{n-1}=k_{n-1}+s^{+}_{n}-s^{-}_{n-1}> k_{n-1}$. A contradiction with the first inequality with $i=n-1$. Then subtract the vector 6 until $s^{+}_{n}=0$.
\item Subtract the vector 1 with $i=n,n+1$ until $k_{n}=k_{n+1}=0$.

\item Subtract the vector 3 with $i=n-1$ until $s^{-}_{n-1}=0$.
\item Subtract the vector 2 with $i=n-1$ until
\item[] $s^{+}_{n-1}=0$
\item[] or
\item[] $s^{-}_{n-2}+s^{+}_{n-2}-s^{+}_{n-1}=k_{n-2}$. In the latter case one has $s^{+}_{n-1}\leq s^{-}_{n-2}$, because otherwise $s^{+}_{n-2}=k_{n-2}+s^{+}_{n-1}-s^{-}_{n-2}> k_{n-2}$. A contradiction with the first inequality with $i=n-2$. Then subtract the vector 4 with $i=n-1$ until $s^{+}_{n-1}=0$.
\item Subtract the vector 1 with $i=n-1$ until $k_{n-1}=0$.
\item[]\ldots\ldots
\end{enumerate}
We repeat 4-6 decreasing $i$ by 1 in every step.
\end{proof}
Let now $\tau\in\Sigma_{D_{n}}^{(\cdot)}$ be a signature of highest weight $\sum k'_i\omega_i$, $s^{\pm}_i$ be some non-negative integers. Let $k_i$ be integers satisfying the equations (\ref{defw2}).
\begin{lemma}
\label{lift2}
Assume that inequalities $(\ref{ineq})$ hold. Then there exists a unique $\sigma\in\Sigma_{D_{n+1}}^{(\cdot)}$ of highest weight $\sum k_i\omega_i$ such that $\psi(\sigma)=\tau$, and $s^{\pm}_i$  are the exponents of $\sigma$ corresponding to the roots $\varepsilon_i\pm\varepsilon_{n+1}$ respectively.
\end{lemma}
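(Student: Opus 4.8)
The plan is to run the argument of Lemma~\ref{lift} essentially verbatim, replacing the ingredients for the step $D_n\to B_n$ by their $D_n\to D_{n+1}$ counterparts: Lemma~\ref{lemma2} in place of Lemma~\ref{lemma}, the generating vectors $(\ref{vectors})$ in place of the two families used there, the weight relations $(\ref{defw2})$ in place of $(\ref{defw})$, and the cone $C$ cut out by the inequalities $(\ref{ineq})$.

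Uniqueness I would dispose of first, and it is immediate. The equality $\psi(\sigma)=\tau$ determines the exponents of $\sigma$ on all roots of $D_n$ (these are exactly the coordinates that $\psi$ retains); the prescribed numbers $s^\pm_i$ are by definition the exponents on $\varepsilon_i\pm\varepsilon_{n+1}$; and $(\ref{defw2})$ then fixes the highest weight $\sum k_i\omega_i$. Hence the full tuple of exponents and the highest weight of $\sigma$ are forced, so at most one such $\sigma$ exists.

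For existence I would proceed as follows. The hypotheses say precisely that $(\{k_i\},\{s^\pm_i\})$ is a non-negative integer point of the cone $C$, so by the preceding lemma it is a sum of the generators $(\ref{vectors})$. Read through Lemma~\ref{lemma2}, each generator is the exponent tuple of an essential $D_{n+1}$-signature of highest weight in $\{\omega_1,\dots,\omega_{n+1},\widehat\omega_n\}$ whose exponents on the roots of $D_n$ all vanish; this yields a decomposition $\tilde\sigma=\tilde\sigma_1+\dots+\tilde\sigma_p$ carrying the correct highest weight $\sum k_i\omega_i$ and the prescribed exponents $s^\pm_i$, with $\psi(\tilde\sigma)$ of highest weight $\sum k'_i\omega_i$. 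Using $(\dagger)$ for $D_n$, I would fix a decomposition $\tau=\tau_1+\dots+\tau_m$ with all $\tau_j$ in the generating set of $\Sigma'_{D_n}$, or all in that of $\Sigma''_{D_n}$. A direct computation from Lemma~\ref{lemma2} shows that the highest weight of $\psi(\tilde\sigma_i)$ ranges over $0,\omega_1,\dots,\omega_{n-1},\omega_n,\widehat\omega_{n-1}$. A generator whose $\psi$-image has highest weight $0$ is simply kept in $\sigma$, being the lift of the identity signature; each $\tau_j$ of highest weight $\omega_\ell$ or $\widehat\omega_{n-1}$ is matched with one generator of the same $\psi$-image weight; and the weights $2\omega_{n-1}$, $2\omega_n$ that may occur among the $\tau_j$, not being $\psi$-images of single generators, are realised by combining two suitable generators into a signature of highest weight in $\{2\omega_n,2\omega_{n+1},\widehat\omega_n\}$ whose $\psi$-image can be taken in $\Sigma_{D_n}(2\omega_{n-1})$ or $\Sigma_{D_n}(2\omega_n)$ by Lemma~\ref{lemma2}. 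After these combinations $\psi(\tilde\sigma_i)$ and $\tau_i$ have equal highest weights for each $i$, and Lemma~\ref{lemma2} supplies an essential $\sigma_i$ with the highest weight and $\varepsilon_i\pm\varepsilon_{n+1}$-exponents of $\tilde\sigma_i$ and with $\psi(\sigma_i)=\tau_i$; then $\sigma=\sigma_1+\dots+\sigma_m$ is the required lift.

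The main obstacle is exactly this last reconciliation of highest-weight multisets across $\psi$, concentrated at the spinor tail $\omega_{n-1},\omega_n,\omega_{n+1},\widehat\omega_n,2\omega_n,2\omega_{n+1}$ and their images $\omega_{n-1},\omega_n,\widehat\omega_{n-1},2\omega_{n-1},2\omega_n$: one must ensure that the possibly combined generators dictated by the cone decomposition can be arranged to have the same multiset of $\psi$-image weights as the fixed $(\dagger)$-decomposition of $\tau$, consistently with the chosen $\Sigma'_{D_n}$- or $\Sigma''_{D_n}$-world. The matches away from the tail, the handling of identity lifts, and the check that $(\ref{ineq})$ is preserved throughout are all routine, so the combinatorics of the tail is where the real content of the proof lies.
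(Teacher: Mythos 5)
Your proposal follows the paper's own proof essentially verbatim: uniqueness is immediate, and existence is obtained by decomposing the lattice point $(\{k_i\},\{s^{\pm}_i\})$ into the generators (\ref{vectors}), fixing a $(\dagger)$-decomposition of $\tau$, matching (and, at the spinor tail, combining pairs of) generators so that the $\psi$-image highest weights agree with those of the $\tau_i$, and then lifting each $\tau_i$ via Lemma \ref{lemma2}. The tail combinatorics you flag as the crux is treated in the paper with the same brevity, so nothing essential is missing.
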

We call $s_i^{\pm}$ the \emph{lifting parameters} for $\tau$.
\begin{proof}
Obviously, if $\sigma$ exists then it is unique.

We argue similarly to the proof of Lemma \ref{lift}. The inequalities (\ref{ineq}), where $s^{\pm}_i, k_i\in\mathbb{R}_{\geq0}$ define the cone $C$. The semigroup of integer points in $C$ is generated by the vectors (\ref{vectors}). We can consider these vectors and hence any integer point in $C$ as an essential signature of $D_{n+1}$ of highest weight $\sum k_i\omega_i$ with exponents $s^{\pm}_i$ corresponding to roots $\varepsilon_i\pm\varepsilon_{n+1}$, and zero exponents corresponding to other roots.

 Fix some decomposition of the signature $\tau=\tau_1+\ldots+\tau_m$, where $\tau_i$ are essential signatures of highest weights in $\{\omega_1,\ldots,\omega_{n-1},2\omega_{n-1},\widehat\omega_{n-1}\}$ or in $\{\omega_1,\ldots,\omega_{n-2},\omega_n,2\omega_{n},\widehat\omega_{n-1}\}$.
  Let $\tilde\sigma=\tilde\sigma_1+\ldots+\tilde\sigma_{p}$ be a decomposition of a signature of highest weight $\sum k_i\omega_i$ with exponents $s^{\pm}_i$ corresponding to roots $\varepsilon_i\pm\varepsilon_{n+1}$ and zero exponents corresponding to other roots via signatures $\tilde\sigma_i$  corresponding to the generating vectors of the above semigroup. The signatures $\psi(\tilde\sigma)$ and $\tau$ have the same highest weight. Therefore for any signature $\tilde\sigma_i$ of highest weight $\omega_j, j<n$ one can find a signature $\tau_k$ such that $\psi(\tilde\sigma_i)$ and $\tau_k$ have the same highest weight. This fact is also true for a signature $\tilde\sigma_i$ of highest weight $\widehat\omega_{n}$ because of our choice of decomposition $\tau$. Combining (if needed) some pairs of signatures $\tilde\sigma_i$ of highest weights $\omega_{n},\omega_{n+1}$ into signatures of highest weights $\widehat\omega_{n},2\omega_{n},2\omega_{n+1}$, one can obtain a decomposition with $p=m$ such that $\psi(\tilde\sigma_i)$ and $\tau_i$ have the same highest weight. By Lemma \ref{lemma2} there exists an essential signature $\sigma_i$ such that $\sigma_i$ and $\tilde\sigma_i$ have the same highest weight and exponents corresponding to the roots $\varepsilon_i\pm\varepsilon_{n+1}$, and $\psi(\sigma_i)=\tau_i$. Now put $\sigma=\sigma_1+\ldots+\sigma_m$.

\end{proof}
\begin{sled}
\label{sledlift2}
For any signature $\sigma\in\Sigma_{D_{n+1}}^{(\cdot)}$ and any decomposition of the signature $\psi(\sigma)=\tau_1+\ldots+\tau_m$, where all signatures $\tau_i$ have highest weights in $\{\omega_1,\ldots,\omega_{n-1},2\omega_{n-1},\widehat\omega_{n-1}\}$ (resp. in $\{\omega_1,\ldots,\omega_{n-2},\omega_n,2\omega_{n},\widehat\omega_{n-1}\}$) and at most one signature has the highest weight $\omega_{n-1}$ (resp. $\omega_n$), there exists a decomposition $\sigma=\sigma_1+\ldots+\sigma_m$ such that all $\sigma_i$ have highest weights in $\{\omega_1,\ldots,\omega_{n},2\omega_{n},\widehat\omega_{n}\}$ or in $\{\omega_1,\ldots,\omega_{n-1},\omega_{n+1},2\omega_{n+1},\widehat\omega_{n}\}$ and $\psi(\sigma_i)=\tau_i$ for all $i=1,\ldots,m$.

\end{sled}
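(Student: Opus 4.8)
The plan is to extract this statement directly from the \emph{construction} carried out in the proof of Lemma~\ref{lift2}, in complete analogy with the way Corollary~\ref{sledlift} was obtained from the proof of Lemma~\ref{lift}. That proof begins by fixing an arbitrary decomposition of $\tau=\psi(\sigma)$ into essential signatures of highest weights in one of the two admissible sets, and then manufactures summands $\sigma_i$ with $\psi(\sigma_i)=\tau_i$ whose sum is the lift of $\tau$ having the prescribed exponents $s_i^{\pm}$ along the roots $\varepsilon_i\pm\varepsilon_{n+1}$. Since here these $s_i^{\pm}$ are exactly the corresponding exponents of the given $\sigma$, the uniqueness part of Lemma~\ref{lift2} identifies the assembled sum $\sigma_1+\ldots+\sigma_m$ with $\sigma$ itself. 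Hence the only thing left to verify is that, feeding the \emph{given} decomposition of $\psi(\sigma)$ into this machine, every summand $\sigma_i$ can be placed inside a single one of the two target sets.

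I would first dispose of the non-spinor summands, which is routine. For $\tau_i$ of highest weight $\omega_j$ with $j\le n-1$, or $\widehat\omega_{n-1}$, the matching generating vector of~(\ref{vectors}) produces a $\sigma_i$ of highest weight $\omega_j$ or $\widehat\omega_n$, which belongs to both target sets; a summand of highest weight $2\omega_{n-1}$ lifts to $2\omega_n$ (to remain in the first set) or to $2\omega_{n+1}$ (to remain in the second). The combining of pairs of $\omega_n,\omega_{n+1}$-summands into $\widehat\omega_n,2\omega_n,2\omega_{n+1}$, needed to match the number $m$ of summands of $\tau$, is carried out verbatim as in Lemma~\ref{lift2}.

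The main obstacle, and the sole reason for the hypothesis, lies in the half-spinor summand. A $\tau_i$ of highest weight $\omega_{n-1}$ (first case) or $\omega_n$ (second case) is the common $\psi$-image of the two distinct $D_{n+1}$-weights $\omega_n$ and $\omega_{n+1}$, whose lifts land in \emph{different} target sets: in the first case through generating vector~$3$ with $i=n$ (giving $s_n^{-}=1$) versus generating vector~$5$ (giving $s_n^{+}=1$), and in the second through generating vector~$1$ with $i=n$ versus $i=n+1$. With at most one such summand, the spinor content $k_n,k_{n+1}$ of the given $\sigma$ fixes unambiguously the target set, and every other summand---including each $2\omega_{n-1}$---is routed into that same set. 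Were two spinors present while $\sigma$ carried the mixed content $\widehat\omega_n=\omega_n+\omega_{n+1}$, the two lifts would be forced into opposite sets, and no admissible fusion could repair this: two spinors combine only to $2\omega_{n-1}$ (resp.\ $2\omega_n$), which lifts to $2\omega_n$ or $2\omega_{n+1}$, never to $\widehat\omega_n$. This is precisely the configuration the hypothesis excludes. The uniqueness in Lemma~\ref{lift2} then identifies $\sigma_1+\ldots+\sigma_m$ with $\sigma$, completing the argument.
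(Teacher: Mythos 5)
Your overall strategy --- rerunning the construction from the proof of Lemma~\ref{lift2} on the given decomposition and invoking the uniqueness statement of that lemma --- is the same as the paper's, and your explanation of why the ``at most one spinor'' hypothesis is indispensable matches the remark the paper makes right after the corollary. However, there is a genuine gap at the step you dismiss as ``carried out verbatim as in Lemma~\ref{lift2}'': the pairing of the skeleton summands $\tilde\sigma_i$ of highest weights $\omega_n,\omega_{n+1}$. This is precisely where the paper's proof does almost all of its work, and it is not verbatim. First, the combining step of Lemma~\ref{lift2} produces signatures of highest weights $\widehat\omega_n$, $2\omega_n$ \emph{and} $2\omega_{n+1}$, i.e.\ it may mix the two target sets; to keep everything in one set one needs the counting fact that the number of $\tilde\sigma_i$ of highest weight $\omega_n$ is at least the number of those of highest weight $\omega_{n+1}$ (which uses $\sigma\in\Sigma'_{D_{n+1}}$, available via $(\dagger)$ for $D_{n+1}$), so that every $\omega_{n+1}$-summand can be paired with an $\omega_n$-summand and at most one $\omega_n$-summand is left unpaired. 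Second --- and this you do not address at all --- the $\psi$-images of the combined pairs must equal the highest weights of the \emph{given} $\tau_k$'s. A pair whose two $\psi$-images both have highest weight $\omega_n$ (of $D_n$) fuses into a signature with $\psi$-image of highest weight $2\omega_n$, which does not occur among the $\tau_k$ in the first case; such pairs can genuinely arise, since each $\tau_k$ of highest weight $\widehat\omega_{n-1}$ contributes one generating summand with $\psi$-image of highest weight $\omega_n$. The paper removes them by an explicit swap with a pair whose $\psi$-images both have highest weight $\omega_{n-1}$, whose existence follows from comparing the highest weights of $\psi(\tilde\sigma)$ and $\psi(\sigma)$. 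Without these two arguments the assembled $\sigma_i$ need not all lie in one target set, and some of them need not match any $\tau_k$.

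A secondary inaccuracy: you treat the lift of a $2\omega_{n-1}$-summand as a free choice between $2\omega_n$ and $2\omega_{n+1}$ ``to remain in'' the desired set. The generating vectors~(\ref{vectors}) contain nothing of highest weight $2\omega_n$ or $2\omega_{n+1}$; these weights arise only by fusing two spinor-type generators, and which fusion is available is dictated by how the exponent budget $s_n^{\pm}$ of $\sigma$ has been distributed by the skeleton decomposition (a $2\omega_{n-1}$-summand receiving $s_n^-=s_n^+=1$ is forced to lift to $\widehat\omega_n$, for instance). So the routing of these summands is not an independent routine step but exactly the pairing problem described above.
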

\begin{proof}
Let $\sigma\in\Sigma'_{D_{n+1}}$ and all $\tau_i$ have highest weights in $\{\omega_1,\ldots,\omega_{n-1},2\omega_{n-1},\widehat\omega_{n-1}\}$. We consider the signature $\tilde\sigma=\tilde\sigma_1+\ldots+\tilde\sigma_{p}$ from the proof of previous Lemma, where $\sum k_i\omega_i$ is the highest weight of $\sigma$ and $s^{\pm}_i$ are exponents of $\sigma$ corresponding to the roots $\varepsilon_i\pm\varepsilon_{n+1}$.

The proof of previous Lemma imply that for any signature $\tilde\sigma_i$ of highest weight $\widehat\omega_j, j\leq n$ one can find a signature $\tau_k$ such that $\psi(\tilde\sigma_i)$ and $\tau_k$ have the same highest weight. We want to combine all signatures (except at most one) $\tilde\sigma_i,\tilde\sigma_j$ of highest weights $\omega_{n+1},\omega_n$ or $\omega_n,\omega_n$ into signatures, say $\tilde\sigma'_i$, of highest weights $\widehat\omega_n$ or $2\omega_n$ such that $\psi(\tilde\sigma'_i)$ have the same highest weight as some $\tau_k$. Then we may finish the proof as in previous Lemma. Thus we may assume that all signatures $\tilde\sigma_i$ have the highest weights $\omega_{n},\omega_{n+1}$.

The number of signatures $\tilde\sigma_i$ of highest weight $\omega_{n}$ is greater than the number of signatures with highest weight $\omega_{n+1}$ since $\sigma\in\Sigma'_{D_{n+1}}$. Therefore one can combine these signatures into pairs $\{\tilde\sigma_i,\tilde\sigma_j\}$ such that in every pair the highest weights are $\omega_{n},\omega_{n+1}$ or $\omega_{n},\omega_n$ and we have at most one signature of highest weight $\omega_{n}$ without a pair. Moreover, we may assume that in every such pair the highest weights of $\psi(\tilde\sigma_i)$, $\psi(\tilde\sigma_j)$ are $\omega_{n-1},\omega_{n}$ or $\omega_{n-1},\omega_{n-1}$ since $\tau_i\in\Sigma'_{D_{n}}$ and $\psi(\tilde\sigma)$ and $\psi(\sigma)$ have the same highest weight. Indeed, for every pair $\{\tilde\sigma_i,\tilde\sigma_j\}$ such that the highest weight of both $\psi(\tilde\sigma_i)$, $\psi(\tilde\sigma_j)$ is $\omega_n$ one has a pair $\{\tilde\sigma'_i,\tilde\sigma'_j\}$ such that the highest weight of both $\psi(\tilde\sigma'_i)$, $\psi(\tilde\sigma'_j)$ is $\omega_{n-1}$. Then we may swap any signatures of highest weights $\omega_n$ from the first pair and the second one. We combine the signatures in every pair into a signature of highest weight $\widehat\omega_{n}$ or $2\omega_{n}$. Then the $\psi$-image of such signatures have the highest weights $\widehat\omega_{n-1}$ or $2\omega_{n-1}$, therefore there exists a signature $\tau_i$ with the same highest weight.

If $\sigma\in\Sigma''_{D_{n+1}}$ and (or) $\tau_i$ have highest weights in $\{\omega_1,\ldots,\omega_{n-2},\omega_n,2\omega_{n},\widehat\omega_{n-1}\}$ then we use the same arguments.
\end{proof}
The assumption about signatures of highest weight $\omega_{n-1}$ (resp. $\omega_n$) is significant. Indeed, let $\sigma$ have the highest weight $\widehat\omega_n$ and $\psi(\sigma)=\tau_1+\tau_2$, where $\tau_i$ have the highest weight $\omega_n$. Then the above statement does not hold.

\begin{lemma}
\label{odnazv}
The property $(*)$ holds for $\Sigma'_{D_{n+1}}$ and $\Sigma''_{D_{n+1}}$.

\label{theorem5}
\end{lemma}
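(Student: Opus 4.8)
The plan is to mirror the proof of Lemma~\ref{phi} (property $(*)$ for $\Sigma_{B_n}^{(\cdot)}$), replacing the single family of short exponents $\varepsilon_i$ by the two families $\varepsilon_i\pm\varepsilon_{n+1}$ and using $\psi\colon\Sigma_{D_{n+1}}^{(\cdot)}\to\Sigma_{D_n}^{(\cdot)}$, Lemma~\ref{lemma2}, Lemma~\ref{lift2} and Corollary~\ref{sledlift2} in place of their $B_n$-counterparts. I would carry out the argument for $\Sigma'_{D_{n+1}}$; the case $\Sigma''_{D_{n+1}}$ follows by the same reasoning after interchanging the roots $\varepsilon_i-\varepsilon_{n+1}\leftrightarrow\varepsilon_i+\varepsilon_{n+1}$ and the weights $\omega_n\leftrightarrow\omega_{n+1}$. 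So let $\sigma\in\Sigma'_{D_{n+1}}$ and fix two decompositions
\[
\sigma=\sigma_1+\ldots+\sigma_k=\sigma'_1+\ldots+\sigma'_l
\]
into essential signatures of highest weights in $\{\omega_1,\ldots,\omega_{n-1},\widehat\omega_n,\omega_n,2\omega_n\}$.

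Applying $\psi$ gives two decompositions of $\psi(\sigma)\in\Sigma_{D_n}^{(\cdot)}$ whose terms, by Lemma~\ref{lemma2}, have highest weights among the generators of $\Sigma'_{D_n}$ and $\Sigma''_{D_n}$. Using property $(\dagger)$ for $D_n$ I would first move, by admissible operations on the $D_n$-side, both of these decompositions so that all their terms lie in a single $\Sigma'_{D_n}$ (or $\Sigma''_{D_n}$), exactly as in Lemma~\ref{phi}: a pair of highest weights $\{\omega_{n-1},\omega_n\}$, $\{2\omega_{n-1},2\omega_n\}$, $\{2\omega_{n-1},\omega_n\}$ or $\{\omega_{n-1},2\omega_n\}$ is merged into a term of highest weight $\widehat\omega_{n-1}$ or into a pair involving $\widehat\omega_{n-1}$. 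Since such merges only decrease the number of $\omega_{n-1}$- and $\omega_n$-terms, I can keep every intermediate $D_n$-decomposition within the ``at most one term of highest weight $\omega_{n-1}$ (resp. $\omega_n$)'' regime, which is precisely the hypothesis under which Corollary~\ref{sledlift2} lifts each admissible operation back to $\Sigma'_{D_{n+1}}$. Once both $\psi$-images are supported in a single $\Sigma'_{D_n}$ or $\Sigma''_{D_n}$, the inductive hypothesis—property $(*)$ for $\Sigma'_{D_n}$ and $\Sigma''_{D_n}$—connects them by admissible operations, each again liftable by Corollary~\ref{sledlift2}. After this reduction I may assume $k=l$ and $\psi(\sigma_i)=\psi(\sigma'_i)$ for every $i$.

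It then remains to pass between two decompositions with equal $\psi$-images term by term. Exactly as in Lemma~\ref{phi}, any term with a unique $\psi$-preimage—for instance a term of highest weight $\omega_n$, for which $\psi$ is injective by Lemma~\ref{lemma2}—must coincide in both decompositions and is removed by induction on the number of terms. For the remaining terms I would match $\sigma_i$ with $\sigma'_i$ by swapping between two terms the exponents corresponding to the roots $\varepsilon_i\pm\varepsilon_{n+1}$, the analogue of the swap of long-root exponents in Lemma~\ref{phi}; by Lemma~\ref{lemma2} the two resulting signatures stay essential, so each swap is admissible, and induction finishes the argument.

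The main obstacle is the bookkeeping of the middle step. Unlike the $B_n$ case there are now two independent families of exponents $s^-_i$ and $s^+_i$ attached to $\varepsilon_i-\varepsilon_{n+1}$ and $\varepsilon_i+\varepsilon_{n+1}$, so both the reduction via $(\dagger)$ and the final swapping split into more cases; and one must organize the merges and the admissible operations supplied by $(*)$ on the $D_n$-side so that every intermediate $D_n$-decomposition stays in the regime where Corollary~\ref{sledlift2} applies—the example following that corollary shows the constraint cannot simply be dropped. Checking that this can always be arranged is the technically demanding part.
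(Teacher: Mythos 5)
Your outline coincides with the paper's strategy: push both decompositions down by $\psi$, use $(\dagger)$ together with property $(*)$ for $\Sigma'_{D_n}$ and $\Sigma''_{D_n}$ and Corollary~\ref{sledlift2} to reduce to $k=l$ with $\psi(\sigma_i)=\psi(\sigma'_i)$ for all $i$, and then pass between the two liftings by swapping exponents. The difficulty you flag in the middle step is real but manageable (each merge supplied by $(\dagger)$ touches only two terms, and its target has at most one term of highest weight $\omega_{n-1}$ or $\omega_n$, so Corollary~\ref{sledlift2} applies to the pair being modified); that part of your argument is essentially sound.

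The genuine gap is the final step, which you dismiss as ``the analogue of the swap of long-root exponents in Lemma~\ref{phi}'' and then explicitly leave unverified. This step is the bulk of the paper's proof, and it is not a routine repetition of the $B_n$ argument: given $\psi(\sigma_1)=\psi(\sigma'_1)$ with $\sigma_1\neq\sigma'_1$, one must \emph{exhibit} a partner signature in the same decomposition with which an admissible swap can be performed, and both the existence of the partner and the shape of the swap depend on a case analysis. The paper takes the minimal $m<n-1$ such that some $\psi(\sigma_i)$ has highest weight $\omega_m$, observes via Lemma~\ref{lemma2} that $\sigma_1,\sigma'_1$ then have highest weights in $\{\omega_m,\omega_{m+1},\widehat\omega_{m+2}\}$, and distinguishes three cases ($\sigma_1$ of highest weight $\omega_m$; of highest weight $\omega_{m+1}$ with $s^{+}_{m+1}=1$; of highest weight $\omega_{m+1}$ with $s^{-}_{m+1}=1$). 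The partner's existence is forced by the minimality of $m$ in the first case and by the fact that only signatures of highest weight $\omega_{m+1}$ can carry $s^{+}_{m+1}=1$ in the second; in the third case the swap must be performed on the \emph{left}-hand side with a term of highest weight $\widehat\omega_{m+2}$ and must spare precisely the exponent at $\varepsilon_{m+2}+\varepsilon_{n+1}$, not all of the new exponents. A separate argument is then still needed for terms whose $\psi$-images have highest weights among $\omega_{n-1},\omega_n,2\omega_{n-1},2\omega_n,\widehat\omega_{n-1}$, where $\sigma_1$ and $\sigma'_1$ may have distinct highest weights such as $\widehat\omega_n$ and $2\omega_{n+1}$ and one must use that the multisets of highest weights on the two sides agree. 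None of this is carried out in your proposal, so as written it is an accurate plan rather than a proof.
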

\begin{proof}
For any signature $\sigma$ of $D_{n+1}$ we denote by $s^{\pm}_{i}(\sigma)$ the exponents of $\sigma$ corresponding to the roots $\varepsilon_i\pm\varepsilon_{n+1}$.
Assume that we have two decompositions of some signature $\sigma$ in $D_{n+1}$:
$$
\sigma=\sigma_1+\ldots+\sigma_k=\sigma'_1+\ldots+\sigma'_l,
$$
where $\sigma_i$ and $\sigma'_i$ are essential signatures for $D_{n+1}$ of highest weights in $$\{\omega_1,\ldots,\omega_{n-1},\omega_{n},2\omega_{n},\widehat\omega_{n}\}$$
or in
$$
\{\omega_1,\ldots,\omega_{n-1},\omega_{n+1},2\omega_{n+1},\widehat\omega_{n}\}.
$$

 One can use arguments similar to those in Lemma \ref{theorem2} by using Corollary \ref{sledlift2} to reduce to the case
$$
\psi(\sigma_1)=\psi(\sigma'_1),\quad\ldots\quad,\quad\psi(\sigma_l)=\psi(\sigma'_l).
$$
Moreover, we may assume that $\psi(\sigma_i)$ has the highest weights in  $\{\omega_1,\ldots,\omega_{n-1},2\omega_{n-1},\widehat\omega_{n-1}\}$ or in $\{\omega_1,\ldots,\omega_{n-2},\omega_n,2\omega_n,\widehat\omega_{n-1}\}$.

Suppose that $\psi(\sigma_1)$ has the highest weight $\omega_{m},m<n-1$, such that among $\psi(\sigma_i)$ there are no  signatures of highest weights $\omega_i,i<m$. By Lemma~\ref{lemma2} the highest weights of both signatures $\sigma_1,\sigma'_1$ are in $\{\omega_m,\omega_{m+1},\widehat\omega_{m+2}\}$. If $\sigma_1=\sigma'_1$ then we are done by induction on $l$. In the remaining cases we provide an algorithm for obtaining the signature $\sigma'_1$ in the left-hand decomposition or the signature $\sigma_1$ in the right-hand decomposition by admissible operations. Then we finish the proof again by induction on $l$.

Denote the highest weight of $\sigma_1$ by
$\lambda$. We arrive at one of the following cases (by symmetry between $\sigma_1$ and $\sigma'_1$ we may suppose that $\lambda\neq\widehat\omega_{m+2}$, because if both $\sigma_1$ and $\sigma'_1$ have the highest weight $\widehat\omega_{m+2}$, then they coincide by Lemma \ref{lemma2}):
\begin{enumerate}
\item $\lambda=\omega_{m}$,
\item $\lambda=\omega_{m+1}$, $s^{+}_{m+1}(\sigma_1)=1$,
\item $\lambda=\omega_{m+1}$, $s^{-}_{m+1}(\sigma_1)=1$.
\end{enumerate}
In the first case one has a signature of highest weight $\lambda$ in the right-hand decomposition, say $\sigma'_2$, such that $s_{i}^{\pm}(\sigma'_2)=0,$ $i=1,\ldots,n,$. Indeed, one must have the signature of highest weight $\lambda$ in the right-hand decomposition and all $s_{i}^{\pm}=0$ for all such signatures, otherwise we would have the contradiction with the choice of the weight $\omega_m$. In the second case one has a signature of highest weight $\lambda$ in the right-hand decomposition, say $\sigma'_2$, such that $s^{+}_{m+1}(\sigma'_2)=1$. Indeed, one must have the signature in the right-hand decomposition with $s^{+}_{m+1}=1$. By Lemma \ref{lemma2} all such signatures have the highest weight $\omega_{m+1}$. In both cases we may swap all exponents of signatures $\sigma'_1$ and $\sigma'_2$ except ones corresponding to roots $\varepsilon_i\pm\varepsilon_{n+1}$  and obtain the signature $\sigma_1$ in the right-hand decomposition. By Lemma \ref{lemma2} it is easy to verify that the signatures obtained by swapping are essential.

In the third case we may assume $s^{-}_{m+1}(\sigma'_1)=s^{+}_{m+2}(\sigma'_1)=1$. Otherwise we would have the first or the second cases. One has the signature of highest weight $\widehat\omega_{m+2}$, say $\sigma_2$, in the left-hand decomposition of $\sigma$ such that $s^{+}_{m+2}(\sigma_2)=1$. We may swap all exponents of signatures $\sigma_1$ and $\sigma_2$ except one corresponding to the root $\varepsilon_{m+2}+\varepsilon_{n+1}$ and obtain the signature
 $\sigma'_1$ in the left-hand decomposition of $\sigma$. By Lemma \ref{lemma2} it is easy to verify that the signatures obtaining by swapping are essential.

 Arguments above show that we may assume that all $\psi(\sigma_i)$ has the highest weight in   $\{\omega_{n-1},\omega_{n},2\omega_{n-1},2\omega_{n},\widehat\omega_{n-1}\}$.  Let, for example, $\psi(\sigma_1)$ have the highest weight $2\omega_n$. By Lemma~\ref{lemma2} we may assume that the highest weights of both signatures $\sigma_1,\sigma'_1$ are in $\{\widehat\omega_{n},2\omega_{n+1}\}$. (In the case of the highest weights of $\sigma_1,\sigma'_1$ in  $\{2\omega_{n},\widehat\omega_{n}\}$ we use similar arguments.) If $\sigma_1=\sigma'_1$ then we are done by induction on $l$. Hence we may suppose that the highest weights of $\sigma_1,\sigma'_1$ are $\widehat\omega_n,2\omega_{n+1}$, respectively, and $s^{\pm}_{i}(\sigma_1)=s^{\pm}_{i}(\sigma'_1)=0$. Since all signatures $\sigma_i,\sigma'_i$ are in $\Sigma''_{D_{n+1}}$ the sets of highest weights of signatures $\{\sigma_i\}$ and $\{\sigma'_i\}$ coincide and one has a signature of highest weight $2\omega_{n+1}$, say $\sigma_2$, in the left-hand decomposition of $\sigma$. One has $s^{+}_n(\sigma_2)\neq2$ since $\psi(\sigma_i)\in\Sigma''_{D_n}$. Then we may swap all exponents of signatures $\sigma_1$ and $\sigma_2$ and obtain the signature $\sigma'_1$ in the left-hand decomposition of $\sigma$. Induction on $l$ finishes the proof. One can use similar arguments in the remaining cases.
\end{proof}

\begin{lemma}
\label{dvezv}
The property $(**)$ holds for $\Sigma'_{D_{n+1}}$ and $\Sigma''_{D_{n+1}}$.
\end{lemma}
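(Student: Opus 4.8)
The plan is to imitate the proof of Lemma~\ref{theorem3} (the passage from $D_n$ to $B_n$): for every pair of generators $\lambda_p,\lambda_q$ of $\Sigma'_{D_{n+1}}$ I would count the essential signatures of highest weight $\lambda_p+\lambda_q$ that are representable as a sum of essential signatures of highest weights in the generating set, and compare this number with $\dim V_{D_{n+1}}(\lambda_p+\lambda_q)$. Every representable signature is essential by Proposition~\ref{1}, and the essential signatures of a fixed highest weight $\mu$ are in bijection with a weight basis of $V_{D_{n+1}}(\mu)$, so there are exactly $\dim V_{D_{n+1}}(\mu)$ of them; hence property $(**)$ is equivalent to the assertion that the count of representable signatures attains $\dim V_{D_{n+1}}(\lambda_p+\lambda_q)$. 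Pairs whose sum is itself a generator (for $\Sigma'_{D_{n+1}}$ this happens for $\omega_n+\omega_n=2\omega_n$) are automatic and may be discarded, and the case of $\Sigma''_{D_{n+1}}$ is entirely parallel, obtained from the $\Sigma'$ computation by interchanging the roots $\varepsilon_n-\varepsilon_{n+1}$ and $\varepsilon_n+\varepsilon_{n+1}$.

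To carry out the count I would use Lemma~\ref{lemma2}, which writes each $\overline\Sigma_{D_{n+1}}(\lambda_p)$ as a disjoint union of pieces of the form $\overline\Sigma_{D_n}(\mu)+(\text{a fixed shift in the }\varepsilon_i\pm\varepsilon_{n+1}\text{ exponents})$. Picking one piece from $\Sigma_{D_{n+1}}(\lambda_p)$ and one from $\Sigma_{D_{n+1}}(\lambda_q)$ fixes the total shift pattern $\{s_i^{\pm}\}$ of the sum, so signatures arising from different total shift patterns are automatically distinct. For a single fixed shift pattern the remaining data is, by the lifting bijection of Lemma~\ref{lift2} together with Corollary~\ref{sledlift2}, exactly a $D_n$-signature $\tau=\psi(\sigma)$ that is representable as $\tau_1+\tau_2$ with the highest weights of $\tau_1,\tau_2$ dictated by the two chosen pieces. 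Thus the representable $D_{n+1}$-signatures of highest weight $\lambda_p+\lambda_q$ and of a given shift pattern are counted by $\bigl|\bigcup(\Sigma_{D_n}(\mu)+\Sigma_{D_n}(\nu))\bigr|$, the union running over those piece-combinations that produce this shift.

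By the induction hypothesis property $(**)$ holds for $\Sigma'_{D_n}$ and $\Sigma''_{D_n}$, so for each weight-pair one has $|\Sigma_{D_n}(\mu)+\Sigma_{D_n}(\nu)|=\dim V_{D_n}(\mu+\nu)$. When a shift pattern is produced by a single piece-combination the corresponding count is therefore one such dimension, and summing over all shift patterns turns $|\Sigma_{D_{n+1}}(\lambda_p)+\Sigma_{D_{n+1}}(\lambda_q)|$ into a finite sum of terms $\dim V_{D_n}(\mu+\nu)$. The required equality then reduces to the purely numerical identity
\[
\dim V_{D_{n+1}}(\lambda_p+\lambda_q)=\sum \dim V_{D_n}(\mu+\nu),
\]
which is precisely a branching rule for the standard embedding $D_n\subset D_{n+1}$ and can be verified from Weyl's dimension formula or from the tables in \cite{[��]} and \cite{[Z]}, exactly as in Lemma~\ref{theorem3}. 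The generic pairs $\omega_p+\omega_q$ with $p,q<n$ fall under this routine scheme.

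The real work is concentrated in the pairs built from the ``spinor'' weights $\omega_n$, $\widehat\omega_n$, $2\omega_n$ (together with their $\Sigma''$-counterparts), where the decompositions in Lemma~\ref{lemma2} are longer and several piece-combinations land in the \emph{same} total shift pattern, so the union in the previous paragraph is not disjoint. There, as in the $B_n$ argument, I expect to need property~$(\dagger)$ to merge the overlapping $D_n$-contributions before applying the induction hypothesis, rewriting sums such as $\Sigma_{D_n}(2\omega_{n-1})+\Sigma_{D_n}(2\omega_n)$ through $\Sigma_{D_n}(\widehat\omega_{n-1})+\Sigma_{D_n}(\widehat\omega_{n-1})$, and similarly for $\Sigma_{D_n}(2\omega_{n-1})+\Sigma_{D_n}(\omega_n)$ and $\Sigma_{D_n}(2\omega_n)+\Sigma_{D_n}(\omega_{n-1})$. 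Keeping the shift-pattern bookkeeping exact in these spinor cases, so that nothing is double-counted and $(\dagger)$ is invoked precisely where two pieces collide, is the main obstacle; once the contributions are correctly assembled, the remaining dimension identities are mechanical.
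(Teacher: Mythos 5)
Your proposal is correct and follows exactly the route the paper intends: the paper's own proof of this lemma is literally the single line ``Similar to Lemma~\ref{theorem3}'', i.e.\ the counting argument via Lemma~\ref{lemma2}, the inductive hypothesis $(**)$ for $\Sigma'_{D_n}$ and $\Sigma''_{D_n}$, property $(\dagger)$ for the overlapping spinor contributions, and the branching-rule dimension identities — all of which you identify, including the correct observation that the pair $\omega_n+\omega_n=2\omega_n$ is automatic. Your write-up is in fact more detailed than what the paper provides.
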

\begin{proof}
Similar to Lemma \ref{theorem3}.
\end{proof}
\begin{sled}
The semigroup of essential signatures for $D_{n+1}$ is generated by essential signatures of highest weights in $\{\omega_1,\ldots,\omega_{n+1},\widehat\omega_{n},2\omega_{n},2\omega_{n+1}\}$.
\end{sled}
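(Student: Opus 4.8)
The plan is to deduce the statement $\Sigma_{D_{n+1}}=\Sigma_{D_{n+1}}^{(\cdot)}$ from the two preceding lemmas by applying Theorem \ref{theorem1} separately to the subsemigroups $\Sigma'_{D_{n+1}}$ and $\Sigma''_{D_{n+1}}$, and then gluing the two conclusions together by means of the decomposition property ($\dagger$). Let $S'$ and $S''$ denote the semigroups of dominant weights generated by $\{\omega_1,\ldots,\omega_{n-1},\widehat\omega_n,\omega_n,2\omega_n\}$ and $\{\omega_1,\ldots,\omega_{n-1},\widehat\omega_n,\omega_{n+1},2\omega_{n+1}\}$, respectively (recall that $\widehat\omega_n=\omega_n+\omega_{n+1}$ for $D_{n+1}$). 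Since properties ($*$) and ($**$) have just been established for $\Sigma'_{D_{n+1}}$ and $\Sigma''_{D_{n+1}}$ in Lemmas \ref{odnazv} and \ref{dvezv}, Theorem \ref{theorem1} yields $\Sigma(S')=\Sigma'_{D_{n+1}}$ and $\Sigma(S'')=\Sigma''_{D_{n+1}}$; that is, every essential signature whose highest weight lies in $S'$ (resp. $S''$) already decomposes into essential signatures of the corresponding fundamental-type highest weights.

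The next step is to identify $S'$ and $S''$ explicitly and to check that together they exhaust all dominant weights. Writing a dominant weight as $\sum_{i=1}^{n+1}k_i\omega_i$, I would note that the generator $\widehat\omega_n$ contributes equally to the $\omega_n$- and $\omega_{n+1}$-coordinates, while $\omega_n$ and $2\omega_n$ contribute only to the $\omega_n$-coordinate; since $\widehat\omega_n$ is the only source of an $\omega_{n+1}$-contribution in the generating set of $S'$, one gets $k_{n+1}$ copies of $\widehat\omega_n$ and an arbitrary nonnegative surplus in the $\omega_n$-coordinate. Hence $S'=\{\sum k_i\omega_i : k_n\geq k_{n+1}\}$ and, symmetrically, $S''=\{\sum k_i\omega_i : k_{n+1}\geq k_n\}$. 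Consequently $S'\cup S''$ is the full monoid of dominant weights, so that $\Sigma_{D_{n+1}}=\Sigma(S')\cup\Sigma(S'')$: every highest weight lies in $S'$ or in $S''$, and essentiality of a signature is intrinsic to the signature together with its highest weight, so $\Sigma(S'\cup S'')=\Sigma(S')\cup\Sigma(S'')$.

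Combining the two previous observations gives $\Sigma_{D_{n+1}}=\Sigma(S')\cup\Sigma(S'')=\Sigma'_{D_{n+1}}\cup\Sigma''_{D_{n+1}}$, and property ($\dagger$), already verified for $D_{n+1}$, identifies the right-hand side with $\Sigma_{D_{n+1}}^{(\cdot)}$. This yields the desired equality $\Sigma_{D_{n+1}}=\Sigma_{D_{n+1}}^{(\cdot)}$, i.e. Conjecture \ref{conjecture1}$'$ for $D_{n+1}$. I do not expect a genuine obstacle at this stage, since the substantive content has already been absorbed into Lemmas \ref{odnazv} and \ref{dvezv} and into the verification of ($\dagger$); the only point requiring care is the bookkeeping of the second paragraph, namely checking that the two weight semigroups truly cover \emph{every} dominant weight with no gap along the diagonal $k_n=k_{n+1}$, which is precisely why both $\Sigma'_{D_{n+1}}$ and $\Sigma''_{D_{n+1}}$ (rather than a single one of them) must be invoked.
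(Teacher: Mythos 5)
Your argument is correct and is precisely the route the paper intends: the corollary is stated without proof because it follows immediately from Theorem~\ref{theorem1} applied to $\Sigma'_{D_{n+1}}$ and $\Sigma''_{D_{n+1}}$ (whose hypotheses are Lemmas~\ref{odnazv} and~\ref{dvezv}), together with the covering of all dominant weights by the two weight semigroups and property $(\dagger)$. Your bookkeeping of $S'=\{k_n\geq k_{n+1}\}$ and $S''=\{k_{n+1}\geq k_n\}$ is the right way to make the unstated gluing step explicit.
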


 Recall (see 4.2) that we denote by ($\sharp'$) the inequalities on the coordinates of a signature which define the semigroup $\Sigma_{D_{n}}$. Let $\sum k'_i\omega_i$ denote the highest weight of a signature for $D_{n}$. Denote by ($\sharp$) the inequalities obtaining from ($\sharp'$) by expressing $k'_i$ via $k_i,s_i^{\pm}$ using the formulas (\ref{defw2}).
\begin{proposition}
\label{conj3dn}
The
semigroup $\Sigma_{D_{n+1}}$ is given by the inequalities $(\sharp)$ and $(\ref{ineq})$.
\end{proposition}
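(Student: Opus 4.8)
The plan is to mirror the proof of Proposition~\ref{conj3bn} almost verbatim, replacing the $B_n$-to-$D_n$ reduction with the $D_{n+1}$-to-$D_n$ reduction of this subsection and invoking the corresponding structural lemmas. The argument splits into the two inclusions that together say the semigroup is cut out by the stated inequalities.

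First I would show that every $\sigma\in\Sigma_{D_{n+1}}$ satisfies $(\sharp)$ and $(\ref{ineq})$. By the Corollary immediately preceding this Proposition, $\Sigma_{D_{n+1}}$ is generated by essential signatures of highest weights in $\{\omega_1,\ldots,\omega_{n+1},\widehat\omega_n,2\omega_n,2\omega_{n+1}\}$, so $\sigma\in\Sigma_{D_{n+1}}^{(\cdot)}$. Lemma~\ref{lemma2} then forces the inequalities $(\ref{ineq})$ on the exponents $s_i^{\pm}$ and the highest-weight coordinates $k_i$; indeed, these inequalities were read off from Lemma~\ref{lemma2} in the first place. For $(\sharp)$, I would use that $\psi(\sigma)\in\Sigma_{D_n}$, hence $\psi(\sigma)$ satisfies the defining inequalities $(\sharp')$, and observe that rewriting $(\sharp')$ via the substitution (\ref{defw2}) that expresses the $D_n$-weight coordinates $k_i'$ in terms of $k_i$ and $s_i^{\pm}$ yields exactly $(\sharp)$.

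Conversely, I would take a signature $\sigma$ of highest weight $\sum k_i\omega_i$ whose coordinates satisfy $(\sharp)$ and $(\ref{ineq})$, and let $\tau$ be the signature of $D_n$ of highest weight $\sum k_i'\omega_i$, with $k_i'$ given by (\ref{defw2}), that agrees with $\sigma$ on the exponents corresponding to the roots of $D_n$. Since $(\sharp)$ holds for $\sigma$, the inequalities $(\sharp')$ hold for $\tau$, so $\tau\in\Sigma_{D_n}$. Because $(\ref{ineq})$ holds, Lemma~\ref{lift2} produces an essential signature $\sigma'\in\Sigma_{D_{n+1}}^{(\cdot)}$ of highest weight $\sum k_i\omega_i$ with $\psi(\sigma')=\tau$ and lifting parameters $s_i^{\pm}$. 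By construction $\sigma'$ has the same exponents as $\sigma$, so the uniqueness clause of Lemma~\ref{lift2} gives $\sigma=\sigma'$, whence $\sigma\in\Sigma_{D_{n+1}}$.

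The crux of the whole argument is Lemma~\ref{lift2}: both the existence of the lift (which rests on the induction hypothesis for $D_n$, in particular property $(\dagger)$, together with the fact that the vectors $(\ref{vectors})$ generate the integer points of the cone $C$) and its uniqueness are precisely what make the two inclusions dovetail. Once Lemma~\ref{lift2} is in hand, I expect this Proposition to be a routine bookkeeping translation of the $B_n$ case with no genuinely new difficulty; the only point requiring care is to verify that $(\sharp)$ is the faithful rewriting of $(\sharp')$ under (\ref{defw2}), i.e.\ that the substitution neither discards nor introduces an inequality.
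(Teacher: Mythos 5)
Your proposal is correct and follows exactly the route the paper intends: the paper's own proof of this Proposition is literally ``Similar to Proposition \ref{conj3bn}'', and your two inclusions (forward via the preceding Corollary, Lemma \ref{lemma2}, and $(\sharp')$ for $\psi(\sigma)$; converse via constructing $\tau$, checking $(\sharp')$, and lifting with the existence and uniqueness clauses of Lemma \ref{lift2}) are precisely the $D_{n+1}$ translation of that argument.
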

\begin{proof}
Similar to Proposition \ref{conj3bn}.
\end{proof}
\begin{sled}
The semigroup $\Sigma_{D_{n+1}}$ is saturated.
\end{sled}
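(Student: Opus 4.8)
The plan is to read off saturation directly from the inequality description of $\Sigma_{D_{n+1}}$ furnished by Proposition~\ref{conj3dn}, exactly as in the parallel statement Corollary~\ref{theorem4} for $B_n$ that follows Proposition~\ref{conj3bn}. Recall that a semigroup $\Sigma$ is saturated precisely when $\Sigma=\Sigma_{\mathbb{Q}}\cap(\mathfrak{t}^{*}_{\mathbb{Z}}\oplus\mathbb{Z}^{N})$, so it suffices to exhibit $\Sigma_{D_{n+1}}$ as the set of lattice points of a rational polyhedral cone and then invoke a purely formal sandwiching argument.

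First I would record that the inequalities $(\sharp)$ and $(\ref{ineq})$ are all homogeneous with integer coefficients. For $(\ref{ineq})$ this is immediate from their explicit form. For $(\sharp)$ it follows because the induction hypothesis gives that $\Sigma_{D_n}$ is saturated: hence the defining inequalities $(\sharp')$ cut out $\Sigma_{D_n}$ as the lattice points of the rational cone it spans and are therefore homogeneous, and substituting the linear homogeneous change of variables $(\ref{defw2})$, which expresses each $k_i'$ through $k_i,s_i^{\pm}$, preserves homogeneity and integrality. Consequently $(\sharp)$ together with $(\ref{ineq})$ define a rational polyhedral cone $C\subset(\mathfrak{t}^{*}_{\mathbb{Z}}\oplus\mathbb{Z}^{N})\otimes\mathbb{Q}$, and by Proposition~\ref{conj3dn} one has $\Sigma_{D_{n+1}}=C\cap(\mathfrak{t}^{*}_{\mathbb{Z}}\oplus\mathbb{Z}^{N})$.

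The conclusion then follows formally. Since $\Sigma_{D_{n+1}}\subseteq C$ and $C$ is a convex cone, the rational cone $\Sigma_{\mathbb{Q}}$ spanned by $\Sigma_{D_{n+1}}$ is contained in $C$; therefore
$$
\Sigma_{D_{n+1}}\subseteq\Sigma_{\mathbb{Q}}\cap(\mathfrak{t}^{*}_{\mathbb{Z}}\oplus\mathbb{Z}^{N})\subseteq C\cap(\mathfrak{t}^{*}_{\mathbb{Z}}\oplus\mathbb{Z}^{N})=\Sigma_{D_{n+1}},
$$
which forces all three sets to coincide. Thus $\Sigma_{D_{n+1}}=\Sigma_{\mathbb{Q}}\cap(\mathfrak{t}^{*}_{\mathbb{Z}}\oplus\mathbb{Z}^{N})$, i.e.\ $\Sigma_{D_{n+1}}$ is saturated.

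The only genuine subtlety, and the step I would treat most carefully, is the homogeneity of $(\sharp)$: this relies on the inductive saturation of $\Sigma_{D_n}$ to guarantee that $(\sharp')$ is a \emph{homogeneous} description of the cone spanned by $\Sigma_{D_n}$, rather than merely some system of inequalities happening to hold on $\Sigma_{D_n}$. Once this is granted, the argument needs no further representation-theoretic input beyond Proposition~\ref{conj3dn} and the induction hypothesis, and it runs in complete analogy with the $B_n$ case.
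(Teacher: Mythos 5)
Your argument is correct and is exactly the deduction the paper intends: the corollary is stated without proof precisely because Proposition~\ref{conj3dn} exhibits $\Sigma_{D_{n+1}}$ as the lattice points of the rational polyhedral cone cut out by the homogeneous inequalities $(\sharp)$ and $(\ref{ineq})$, from which saturation follows by the sandwich $\Sigma\subseteq\Sigma_{\mathbb{Q}}\cap(\mathfrak{t}^{*}_{\mathbb{Z}}\oplus\mathbb{Z}^{N})\subseteq C\cap(\mathfrak{t}^{*}_{\mathbb{Z}}\oplus\mathbb{Z}^{N})=\Sigma$. Your extra care about the homogeneity of $(\sharp)$ is sound (and in fact the explicit inequalities in Theorem~\ref{ineqdn} are visibly homogeneous), so nothing is missing.
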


\subsection{Results}
Finally, we have the numerations on the sets of positive roots for $D_n$ and $B_n$ as follows:
$$
\varepsilon_1-\varepsilon_2,\varepsilon_1+\varepsilon_2,\ldots,\varepsilon_{1}-\varepsilon_n,\ldots,\varepsilon_{n-1}-\varepsilon_{n},\varepsilon_1+\varepsilon_n,\ldots,\varepsilon_{n-1}+\varepsilon_n\quad \textrm{for $D_n$},
$$
$$
\textrm{roots of $D_n$},\varepsilon_1,\ldots,\varepsilon_n\quad \textrm{for $B_n$}.
$$
Also we have the monomial order on the set of signatures. We compare two signatures of $D_n$ of the same highest weight as follows (we move on to the next step if on the previous steps the tuples of exponents of the signatures coincide):
\begin{enumerate}
\item compare the tuples of exponents corresponding to the roots
 $\varepsilon_1+\varepsilon_n,\ldots,\varepsilon_{n-1}+\varepsilon_n$ by using the degree lexicographic order,

 \item compare the tuples of exponents corresponding to the roots $\varepsilon_1-\varepsilon_n,\ldots,\varepsilon_{n-1}-\varepsilon_n$ by the degree lexicographic order,
     \item compare the tuples of exponents corresponding to the roots $\varepsilon_1+\varepsilon_{n-1},\ldots,\varepsilon_{n-2}+\varepsilon_{n-1}$ by the degree lexicographic order,
         \item compare the tuples of exponents corresponding to the roots $\varepsilon_1-\varepsilon_{n-1},\ldots,\varepsilon_{n-2}-\varepsilon_{n-1}$ by the degree lexicographic order,
 \item[]\ldots\ldots
 \item compare the exponents corresponding to the root $\varepsilon_1+\varepsilon_2$,
     \item compare the exponents corresponding to the root $\varepsilon_1-\varepsilon_2$.
\end{enumerate}
We compare two signatures of $B_n$ of the same highest weight as follows (we move on to the 2nd step if on the 1st step the tuples of exponents of the signatures coincide):
\begin{enumerate}
\item compare the tuples of exponents corresponding to the roots
 $\varepsilon_1,\ldots,\varepsilon_n$ by using the degree lexicographic order,
 \item compare the tuples of exponents corresponding to the long roots by the above rule for $D_n$.
\end{enumerate}
The inductive argument in subsections 4.2 and 4.3 proves our main result:
\begin{theor}
 \label{main}
 The following statements are true for $D_n$, $B_n$ ($n\geq2$):

For the numeration of positive roots and the monomial order on signatures described above, the semigroup $\Sigma$ is saturated and generated by essential signatures of highest weights in the set $$\{\omega_1,\ldots,\omega_n,2\omega_{n-1},2\omega_n,\omega_{n-1}+\omega_n\}\quad\textrm{for $D_n$}$$
and in the set
  $$\{\omega_1,\ldots,\omega_n,2\omega_{n}\}\quad\textrm{for $B_n$}.$$

\end{theor}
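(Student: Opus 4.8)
The plan is to prove Theorem~\ref{main} by a single induction on $n\geq 2$, carrying along the \emph{induction hypothesis} for $D_n$ (properties $(*)$, $(**)$ for $\Sigma'_{D_n}$ and $\Sigma''_{D_n}$, saturation of $\Sigma_{D_n}$, and the decomposition $(\dagger)$), deducing from it the two assertions of the theorem for $D_n$, and then establishing in the inductive step the assertions for $B_n$ together with the full induction hypothesis for $D_{n+1}$. The point is that all the substantive work has already been packaged into the lemmas of subsections 4.1--4.3, so the proof itself is an assembly of those results.

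First I would observe that the induction hypothesis for $D_n$ already implies (\ref{D_n}). The generating weights of $\Sigma'_{D_n}$ and $\Sigma''_{D_n}$ generate, respectively, the sub-semigroups $S'=\{\sum k_i\omega_i : k_{n-1}\geq k_n\}$ and $S''=\{\sum k_i\omega_i : k_n\geq k_{n-1}\}$ of the dominant cone, and $S'\cup S''$ is all of it. Applying Theorem~\ref{theorem1} to $\Sigma'_{D_n}$ and to $\Sigma''_{D_n}$ (whose defining properties $(*)$, $(**)$ are part of the hypothesis) gives $\Sigma_{D_n}(S')=\Sigma'_{D_n}$ and $\Sigma_{D_n}(S'')=\Sigma''_{D_n}$. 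Since every essential signature has a single highest weight, lying in $S'$ or in $S''$, we get $\Sigma_{D_n}=\Sigma_{D_n}(S')\cup\Sigma_{D_n}(S'')=\Sigma'_{D_n}\cup\Sigma''_{D_n}$, which equals $\Sigma_{D_n}^{(\cdot)}$ by $(\dagger)$; together with the saturation clause this is exactly the theorem for $D_n$. The base case $n=2$ is then the $D_2$-lemma of subsection 4.1, which verifies the induction hypothesis for $D_2$.

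For the inductive step I would assume the hypothesis for $D_n$ and argue in two parts. For $B_n$: Lemmas~\ref{theorem2} and \ref{theorem3} establish $(*)$ and $(**)$ for $\Sigma_{B_n}^{(\cdot)}$, so Theorem~\ref{theorem1} yields (\ref{B_n}) (this is Corollary~\ref{conj1bn}), and Corollary~\ref{theorem4} gives saturation of $\Sigma_{B_n}$; this settles the theorem for $B_n$. For $D_{n+1}$: subsection 4.3 reproves \emph{every} clause of the induction hypothesis at level $n+1$ --- the $(\dagger)$-lemma, Lemmas~\ref{theorem5} and \ref{dvezv} for $(*)$, $(**)$ on $\Sigma'_{D_{n+1}}$, $\Sigma''_{D_{n+1}}$, and Proposition~\ref{conj3dn} with its corollary for saturation --- whereupon the deduction of the previous paragraph (applied at level $n+1$) gives (\ref{D_n}) for $D_{n+1}$. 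This closes the induction and proves the theorem for all $n\geq2$.

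The main obstacle is not this assembly but the content hidden in property $(*)$ at each level, i.e.\ that any two decompositions of an essential signature into fundamental-type essential signatures are connected by admissible operations. Its verification rests on two ingredients I would secure first: the explicit branching-type descriptions of the essential signatures (Lemmas~\ref{lemma} and \ref{lemma2}), which hinge on choosing the extended monomial order so that the signatures coming from the ``new'' root directions are minimal in their weight spaces; and the liftability of admissible operations along the forgetful homomorphism $\psi$ (Corollaries~\ref{sledlift} and \ref{sledlift2}), which is what transports the already-established property $(*)$ from $D_n$ up to $B_n$ and $D_{n+1}$. The saturation statements, by contrast, reduce cleanly to the fact that the lifting cones (cut out by $k_i\geq s_i$, resp.\ by (\ref{ineq})) have their integer points generated by the explicitly listed $0/1$-vectors.
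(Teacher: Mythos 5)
Your proposal is correct and follows the paper's own route exactly: the same induction on $n$ starting from the $D_2$ base case, with the same induction hypothesis ($(*)$, $(**)$ for $\Sigma'_{D_n}$, $\Sigma''_{D_n}$, saturation, and $(\dagger)$), the same use of Lemmas~\ref{theorem2}, \ref{theorem3} and Corollaries~\ref{conj1bn}, \ref{theorem4} for the $B_n$ step, and the same lemmas of subsection 4.3 for the $D_{n+1}$ step. Your explicit derivation of (\ref{D_n}) from the induction hypothesis --- applying Theorem~\ref{theorem1} separately to the weight sub-semigroups $\{k_{n-1}\geq k_n\}$ and $\{k_n\geq k_{n-1}\}$ and then invoking $(\dagger)$ --- is a correct filling-in of a step the paper leaves implicit, not a departure from it.
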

Let $p_{i,j}^{\pm}\geq0$, $i<j$, be the exponent of a signature of $D_n$ corresponding to the root $\varepsilon_i\pm\varepsilon_j$, respectively, $\sum k_i\omega_i$ be a dominant weight. One can easily deduce the inequalities on $p_{i,j}^{\pm},k_i$ which define the semigroup $\Sigma_{D_n}$ by using Proposition \ref{conj3dn} and inequalities defining $\Sigma_{D_2}$.
\begin{theor}
\label{ineqdn}
The semigroup $\Sigma_{D_n}$ is given by the inequalities:
\begin{enumerate}
\item $p_{i,j}^{+}+\sum_{k=j+1}^{n} (p_{i,k}^{+}+ p_{i,k}^{-}-p_{i+1,k}^{+}-p_{i+1,k}^{-})\leq k_i,\\ i\in\{1,\ldots,n-2\}, j\in\{i+2,\ldots,n\},$

\item $p_{i+1,j}^{-}+\sum_{k=j}^{n} (p_{i,k}^{+}+ p_{i,k}^{-}-p_{i+1,k}^{+}-p_{i+1,k}^{-})\leq k_i,\\ i\in\{1,\ldots,n-2\}, j\in\{i+2,\ldots,n\},$

\item $p_{i,i+1}^{-}+\sum_{k=i+2}^{n} (p_{i,k}^{+}+ p_{i,k}^{-}-p_{i+1,k}^{+}-p_{i+1,k}^{-})\leq k_i,\\i\in\{1,\ldots,n-1\},$
\item $p_{i,i+2}^{-}+p_{i,i+2}^{+}+p_{i+1,i+2}^{-}+\sum_{k=i+3}^{n} (p_{i,k}^{+}+ p_{i,k}^{-}-p_{i+2,k}^{+}-p_{i+2,k}^{-})\leq k_i+k_{i+1},\\i\in\{1,\ldots,n-2\},$
\item $p_{i,i+1}^{+}+\sum_{k=i+2}^{n} (p_{i,k}^{+}+ p_{i,k}^{-}+p_{i+1,k}^{+}+p_{i+1,k}^{-})\leq k_i+k_{n-1}+k_n+2\sum_{j=i+1}^{n-2}k_j,\\i\in\{1,\ldots,n-2\},$
\item $p_{n-1,n}^{+}\leq k_n.$
    \end{enumerate}
\end{theor}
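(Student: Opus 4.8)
The plan is to argue by induction on $n$, following exactly the inductive construction of $\Sigma_{D_n}$ carried out in the previous subsections. By Proposition~\ref{conj3dn}, the semigroup $\Sigma_{D_{n+1}}$ is cut out by the inequalities $(\sharp)$, obtained from the defining inequalities $(\sharp')$ of $\Sigma_{D_n}$ by the substitution $(\ref{defw2})$, together with the inequalities $(\ref{ineq})$. Identifying the new lifting parameters with the new exponents via $s_i^{\pm}=p_{i,n+1}^{\pm}$, it therefore suffices to show that, assuming items~1--6 hold for $D_n$, applying $(\ref{defw2})$ and adjoining $(\ref{ineq})$ reproduces items~1--6 for $D_{n+1}$.

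For the base case I would take $D_2$, where the two exponents are $p_{1,2}^-$ and $p_{1,2}^+$. Items~1, 2, 4, 5 have empty index ranges, while items~3 and~6 degenerate to $p_{1,2}^-\le k_1$ and $p_{1,2}^+\le k_2$, which are precisely the inequalities defining $\Sigma_{D_2}$ established above.

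The engine of the inductive step is the telescoping structure of the sums: substituting $(\ref{defw2})$ for $k_i'$, and for the combinations $k_i'+k_{i+1}'$ and $k_i'+k_{n-1}'+k_n'+2\sum_{j=i+1}^{n-2}k_j'$ occurring in items~4 and~5, moves exactly the combination of $s_i^{\pm}$ to the left-hand side that constitutes the missing $k=n+1$ term of the corresponding sum. Thus each of items~1--5 for $D_n$ turns into the corresponding item for $D_{n+1}$, restricted to the sub-range $j\le n$ (for items~1 and~2), $i\le n-1$ (for item~3), or $i\le n-2$ (for items~4 and~5). The boundary cases are then supplied by $(\ref{ineq})$ and by item~6: the first two inequalities of $(\ref{ineq})$ yield items~1 and~2 at $j=n+1$; the third yields item~4 at $i=n-1$; the fourth yields item~3 at $i=n$; the fifth yields item~6; and item~6 for $D_n$, namely $p_{n-1,n}^+\le k_n'$, when fed through the \emph{special} formula for $k_n'$ in $(\ref{defw2})$, produces item~5 at the boundary value $i=n-1$. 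One checks that this accounts for every inequality of $D_{n+1}$ exactly once, with no spurious inequalities.

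I expect the main obstacle to be item~5, whose right-hand side $k_i+k_n+k_{n+1}+2\sum_{j=i+1}^{n-1}k_j$ involves the doubled interior coefficients together with the special formula for $k_n'$. Here I would compute $k_i'+k_{n-1}'+k_n'+2\sum_{j=i+1}^{n-2}k_j'$ explicitly: the interior sum telescopes (contributing $-s_{i+1}^{\pm}+s_{n-1}^{\pm}$), the $s_{n-1}^{\pm}$ and $s_n^{\pm}$ contributions coming from $k_{n-1}'$ and $k_n'$ cancel against those of the telescoped interior sum, and the surviving $-s_i^{\pm}-s_{i+1}^{\pm}$ on the right cross over to complete the $k=n+1$ summand on the left, giving the desired right-hand side $k_i+k_n+k_{n+1}+2\sum_{j=i+1}^{n-1}k_j$. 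The remaining verifications amount to routine telescoping and index bookkeeping, which must nonetheless be carried out carefully to confirm the claimed bijective correspondence between the two systems of inequalities.
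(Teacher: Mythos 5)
Your proposal is correct and follows exactly the route the paper intends: the paper gives no detailed argument beyond the remark that the inequalities are deduced from Proposition~\ref{conj3dn} and the defining inequalities of $\Sigma_{D_2}$, and your induction — identifying $s_i^{\pm}=p_{i,n+1}^{\pm}$, substituting (\ref{defw2}) so that the displaced terms complete the $k=n+1$ summands, and matching the five inequalities of (\ref{ineq}) together with the transformed item~6 to the boundary cases $j=n+1$, $i=n$, $i=n-1$ — is precisely that deduction, with the bookkeeping (including the telescoping in item~5) checking out.
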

 Let $s_i\geq0$ be the exponent of a signature of $B_n$ corresponding to the root $\varepsilon_i$. Let $p_{i,i}^{\pm}:=\frac{1}{2}s_i$. The inequalities which define the semigroup $\Sigma_{B_n}$ can be easily deduced from Proposition \ref{conj3bn} and the previous theorem.
 \begin{theor}
 \label{ineqbn}
The semigroup $\Sigma_{B_n}$ is given by the inequalities:
\begin{enumerate}
\item $s_i-s_{i+1}+p_{i,j}^{+}+\sum_{k=j+1}^{n} (p_{i,k}^{+}+ p_{i,k}^{-}-p_{i+1,k}^{+}-p_{i+1,k}^{-})\leq k_i,$\\ $i\in\{1,\ldots,n-2\}, j\in\{i+2,\ldots,n\},$

\item $s_i-s_{i+1}+p_{i+1,j}^{-}+\sum_{k=j}^{n} (p_{i,k}^{+}+ p_{i,k}^{-}-p_{i+1,k}^{+}-p_{i+1,k}^{-})\leq k_i,\\ i\in\{1,\ldots,n-2\}, j\in\{i+2,\ldots,n\},$

\item $s_i-s_{i+1}+p_{i,i+1}^{-}+\sum_{k=i+2}^{n} (p_{i,k}^{+}+ p_{i,k}^{-}-p_{i+1,k}^{+}-p_{i+1,k}^{-})\leq k_i,\\i\in\{1,\ldots,n-1\},$
\item $ s_i+p_{i+1,i+2}^{-}+\sum_{k=i+2}^{n} (p_{i,k}^{+}+ p_{i,k}^{-}-p_{i+2,k}^{+}-p_{i+2,k}^{-})\leq k_i+k_{i+1},$\\
    $i\in\{1,\ldots,n-2\},$
\item $ s_i-p_{i,i+1}^{-}+\sum_{k=i+1}^{n} (p_{i,k}^{+}+ p_{i,k}^{-}+p_{i+1,k}^{+}+p_{i+1,k}^{-})\leq k_i+k_n+2\sum_{j=i+1}^{n-1}k_j,\\i\in\{1,\ldots,n-1\},$
\item $s_i\leq k_i,$ $i\in\{1,\ldots,n\}.$
    \end{enumerate}
\end{theor}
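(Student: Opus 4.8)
The plan is to derive Theorem~\ref{ineqbn} from Theorem~\ref{ineqdn} by the purely formal substitution prescribed in Proposition~\ref{conj3bn}. Recall that Proposition~\ref{conj3bn} asserts that $\Sigma_{B_n}$ is cut out by the inequalities $(\natural)$ together with $s_i\le k_i$ for $i=1,\ldots,n$, and that $(\natural)$ is obtained from the system $(\sharp')$ defining $\Sigma_{D_n}$---that is, from the six families in Theorem~\ref{ineqdn}---by replacing the $D_n$-highest-weight coordinates $k_i'$ according to (\ref{defw}), namely $k_i'=k_i-s_i+s_{i+1}$ for $i<n$ and $k_n'=k_{n-1}-s_{n-1}+k_n-s_n$. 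Since $s_i\le k_i$ is already inequality (6) of the target list, the whole task reduces to carrying out this substitution in each $D_n$-inequality and rewriting the result in the uniform shape claimed for $B_n$, using the convention $p_{i,i}^{\pm}:=\tfrac12 s_i$ to absorb the emerging $s$-terms into the sums.

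First I would treat families (1)--(3), where the substitution is immediate: the right-hand side $k_i'$ becomes $k_i-s_i+s_{i+1}$, and transposing $-s_i+s_{i+1}$ to the left produces exactly the prefactor $s_i-s_{i+1}$ appearing in the $B_n$-inequalities (1)--(3). One subtlety to record is the boundary value $i=n-1$ in family (3), where the inner sum is empty and the $D_n$-inequality degenerates to $p_{n-1,n}^{-}\le k_{n-1}'$; after substitution this is precisely the $B_n$-inequality (3) at $i=n-1$. For family (4) one substitutes $k_i'+k_{i+1}'=k_i+k_{i+1}-s_i+s_{i+2}$, and then observes that letting the inner sum start at $k=i+2$ and invoking $p_{i+2,i+2}^{+}+p_{i+2,i+2}^{-}=s_{i+2}$ reintroduces the $-s_{i+2}$ term, so that after transposing $-s_i$ the inequality acquires the stated form.

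The genuinely delicate bookkeeping occurs in family (5), which I expect to be the main obstacle. Substituting $k_i'+k_{n-1}'+k_n'+2\sum_{j=i+1}^{n-2}k_j'$ and collecting the telescoping $s$-contributions carefully yields the clean right-hand side $k_i+k_n+2\sum_{j=i+1}^{n-1}k_j$, the $s_{n-1}$ and $s_n$ terms cancelling against one another; on the left, writing the sum from $k=i+1$ introduces the diagonal term $p_{i+1,i+1}^{+}+p_{i+1,i+1}^{-}=s_{i+1}$ together with an extra $p_{i,i+1}^{-}$, and the $-p_{i,i+1}^{-}$ in the target form is exactly what cancels the latter, reproducing the asserted expression. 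The cancellations among $s_i,s_{i+1},s_{n-1},s_n$ must be tracked precisely here. One must also confirm that the leftover $D_n$-inequality (6), $p_{n-1,n}^{+}\le k_n'$, is not lost: after substitution it reads $s_{n-1}+s_n+p_{n-1,n}^{+}\le k_{n-1}+k_n$, which I would identify with the $B_n$-inequality (5) specialized to $i=n-1$, where the outer sum $2\sum_{j=i+1}^{n-1}$ is empty and the diagonal convention supplies the remaining terms. Once every $D_n$-inequality has been matched with its $B_n$-counterpart in this way and $s_i\le k_i$ is appended, Proposition~\ref{conj3bn} yields the theorem.
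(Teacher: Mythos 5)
Your proposal is correct and follows exactly the route the paper intends: the paper gives no written proof beyond the remark that the inequalities "can be easily deduced from Proposition~\ref{conj3bn} and the previous theorem," and your substitution of (\ref{defw}) into each family of Theorem~\ref{ineqdn}, including the identification of the $D_n$-inequality (6) with the $i=n-1$ case of the $B_n$-inequality (5) via the convention $p_{i,i}^{\pm}=\tfrac12 s_i$, checks out. Nothing is missing.
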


\end{document}